\documentclass[11pt,reqno]{amsart}
\usepackage{amsmath}
\usepackage{amssymb, latexsym}
\usepackage{hyperref}
\usepackage[margin=1in]{geometry}
\makeatletter
\def\section{\@startsection{section}{1}%
	\z@{.7\linespacing\@plus\linespacing}{.5\linespacing}%
	{\bfseries
		\centering
}}
\def\@secnumfont{\bfseries}
\makeatother
\usepackage{amsmath,amssymb,amsthm,graphicx,amsxtra, setspace}
\usepackage[utf8]{inputenc}
\usepackage{charter}
\usepackage{mathrsfs}
\usepackage{alltt}
\usepackage{relsize}
\usepackage{hyperref}
\usepackage{aliascnt}
\usepackage{tikz}
\usepackage{mathtools}
\usepackage{multicol}
\usepackage{upgreek}
\usepackage{graphicx,type1cm,eso-pic,color}
\allowdisplaybreaks
\usepackage{scalerel,stackengine}
\stackMath
\newcommand\reallywidehat[1]{%
	\savestack{\tmpbox}{\stretchto{%
			\scaleto{%
				\scalerel*[\widthof{\ensuremath{#1}}]{\kern-.6pt\bigwedge\kern-.6pt}%
				{\rule[-\textheight/2]{1ex}{\textheight}}
			}{\textheight}%
		}{0.5ex}}%
	\stackon[1pt]{#1}{\tmpbox}%
}
\parskip 1ex


\newtheorem{theorem}{Theorem}[section]

\newaliascnt{lemma}{theorem}
\newtheorem{lemma}[lemma]{Lemma}
\aliascntresetthe{lemma} 

\newaliascnt{proposition}{theorem}

\aliascntresetthe{proposition}

\newaliascnt{assumption}{theorem}
\newtheorem{assumption}[assumption]{Assumption}
\aliascntresetthe{assumption}

\newaliascnt{corollary}{theorem}

\aliascntresetthe{corollary}

\newaliascnt{definition}{theorem}
\newtheorem{definition}[definition]{Definition}
\aliascntresetthe{definition}

\newaliascnt{example}{theorem}
\newtheorem{example}[example]{Example}
\aliascntresetthe{example}

\newaliascnt{remark}{theorem}
\newtheorem{remark}[remark]{Remark}
\aliascntresetthe{remark}

\newaliascnt{hypothesis}{theorem}

\aliascntresetthe{hypothesis}

\newaliascnt{property}{theorem}

\aliascntresetthe{property}

\let\originalleft\left
\let\originalright\right
\renewcommand{\left}{\mathopen{}\mathclose\bgroup\originalleft}
\renewcommand{\right}{\aftergroup\egroup\originalright}

\makeatletter
\newcommand{\doublewidetilde}[1]{{%
		\mathpalette\double@widetilde{#1}%
}}
\newcommand{\double@widetilde}[2]{%
	\sbox\z@{$\m@th#1\widetilde{#2}$}%
	\ht\z@=.9\ht\z@
	\widetilde{\box\z@}%
}
\makeatother


\renewcommand{\d}{\/\mathrm{d}\/}

\def\w{\textbf{W}^{\varepsilon}_{{\theta}^{\varepsilon}}}

\def\e{\varepsilon}

\def\me{\mathbf{e}}
\def\L{\mathrm{L}}
\def\A{\mathrm{A}}
\def\I{\mathrm{I}}
\def\F{\mathrm{F}}
\def\C{\mathrm{C}}

\def\h{\mathbf{h}}
\def\J{\mathrm{J}}
\def\B{\mathrm{B}}
\def\D{\mathrm{D}}
\def\y{\mathbf{y}}

\def\X{\mathbb{X}}

\def\z{\mathbf{z}}
\def\v{\mathbf{v}}
\def\V{\mathbb{V}}
\def\w{\mathbf{w}}
\def\W{\mathrm{W}}
\def\G{\mathbb{G}}

\def\no{\nonumber}
\def\V{\mathbb{V}}
\def\wi{\widetilde}

\def\U{\mathrm{U}}
\def\P{\mathbb{P}}
\def\u{\mathbf{u}}
\def\H{\mathbb{H}}

\def\p{\mathbf{p}}

\newcommand{\R}{\mathbb{R}}

\renewcommand{\d}{\/\mathrm{d}\/}


\newcommand{\Addresses}{{
		\footnote{

			\noindent \textsuperscript{1}School of Mathematics,
			Indian Institute of Science Education and Research, Trivandrum (IISER-TVM),
			Maruthamala PO, Vithura, Thiruvananthapuram, Kerala, 695 551, INDIA.  \par\nopagebreak \noindent
			\textit{e-mail:} \texttt{tania9114@iisertvm.ac.in}

			\noindent \textsuperscript{2}School of Mathematics, Indian Institute of Science Education and Research, Trivandrum (IISER-TVM),
			Maruthamala PO, Vithura, Thiruvananthapuram, Kerala, 695 551, INDIA.  \par\nopagebreak \noindent
			\textit{e-mail:} \texttt{sheetal@iisertvm.ac.in}

			\noindent \textsuperscript{3}Statistics and Mathematics Unit,
			Indian Statistical Institute(ISI), Bangalore Centre, 8th Mile, Mysore Road, RVCE Post, Bangalore-560 059, INDIA.\par\nopagebreak
			\noindent  \textit{e-mail:} \texttt{maniltmohan@gmail.com, manil$\_$vs@isibang.ac.in}
			
			\noindent \textsuperscript{*}Corresponding author.

			\medskip\noindent
			{\bf Acknowledgments:}  Tania Biswas  would like to thank the  Indian Institute of Science Education and Research, Thiruvananthapuram, for providing financial support and stimulating environment for the research. M. T. Mohan would like to thank the Indian Statistical Institute (ISI) Bangalore Centre, for providing stimulating scientific environment and resources.
			
}}}

\begin{document}

	\title[Optimal Control Problems for Cahn-Hilliard-Navier-Stokes System]{Maximum Principle for Some Optimal Control Problems Governed by 2D Nonlocal Cahn-Hillard-Navier-Stokes Equations	\Addresses	}

	\author[ T. Biswas, S. Dharmatti and M. T. Mohan ]
	{Tania Biswas\textsuperscript{1}, Sheetal Dharmatti\textsuperscript{2*}  and Manil T. Mohan\textsuperscript{3}}

	\maketitle

	\begin{abstract}
		This work is concerned about some optimal control problems associated to the evolution of two isothermal, incompressible, immiscible fluids in a two-dimensional bounded domain. The Cahn-Hilliard-Navier-Stokes model consists of a Navier–Stokes equation governing the fluid velocity field coupled with a convective Cahn–Hilliard equation for the relative concentration of one of the fluids. A distributed optimal control problem is formulated as the minimization of a cost functional subject to the controlled nonlocal Cahn-Hilliard-Navier-Stokes equations. We establish the first-order necessary conditions of optimality by proving Pontryagin maximum principle for optimal control of such system via the seminal Ekeland variational principle. The optimal control is characterized using the adjoint variable. We also study another control problem which is similar to that of data assimilation problems in meteorology of obtaining unknown initial data using optimal control techniques when the underlying system is same as above. 
	\end{abstract}

	\keywords{\textit{Key words:} optimal control, nonlocal Cahn-Hilliard-Navier-Stokes systems, Ekeland variational principle, Pontryagin maximum principle, data assimilation problem. }
	
	Mathematics Subject Classification (2010): 49J20, 35Q35, 76D03.
	

	\section{Introduction}\label{sec1}\setcounter{equation}{0}
	This work concerns with control problems related to the evolution of two isothermal, incompressible, immiscible fluids, assuming that the temperature variations are negligible. A general model for such a system is known as \emph{nonlocal Cahn-Hilliard-Navier-Stokes system}   and is given by
	\begin{equation}\label{1.1}
	\left\{
	\begin{aligned}
	\varphi_t + \u\cdot \nabla \varphi &=\text{ div} (m(\varphi) \nabla \mu),\\
	\mu &= a \varphi - \J\ast \varphi + \F'(\varphi)  \ \text{ in } \  \Omega \times (0,T),\\
	\u_t - 2 \text{div } ( \nu(\varphi) \mathrm{D}\u ) + (\u\cdot \nabla )\u + \nabla \uppi &= \mu \nabla \varphi + \mathbf{h}  \ \text{ in } \ \Omega \times (0,T), \\
	\text{div }\u&= 0  \ \text{ in } \  \Omega \times (0,T), \\
	\frac{\partial \mu}{\partial \mathbf{n}} &= \mathbf{0}, \ \u=\mathbf{0} \ \text{ on } \ \partial \Omega \times (0,T),\\
	\u(0) &= \u_0, \  \ \varphi(0) = \varphi _0 \ \text{ in } \ \Omega, 
	\end{aligned}   
	\right.
	\end{equation}
	where  $\u(x,t)$ denotes  the  average velocity of the fluid and  $\varphi(x,t)$ denotes the relative concentration(difference of the concentrations of two fluids) of the fluids. The evolution happens in  $\Omega\times(0,T)$, where $\Omega $ is a  subset of $\R^2$ or $\R^3$. In (\ref{1.1}), $m$ is the mobility parameter, $\mu$ is the \emph{chemical potential}, $\uppi$ is the \emph{pressure}, $\J$ is the \emph{spatial-dependent internal kernel}, $\J \ast \varphi$ denotes the spatial convolution over $\Omega$, $a$ is defined by $a(x) := \int _\Omega \J(x-y) \d y$, $\F$ is a \emph{double-well potential}, $\nu$ is the coefficient of \emph{kinematic viscosity} and $\mathbf{h}$ is the external forcing term acting in the mixture.  (see \cite{weak,unique} for more details). The symmetric part of the gradient of the flow velocity vector is denoted by $\mathrm{D}\u$,  that is, $\mathrm{D}\u$ is the strain tensor $\frac{1}{2}\left(\nabla\u+(\nabla\u)^{\top}\right)$. The density is supposed to be constant and is scaled to one (i.e., matched densities). The system (\ref{1.1}) is  called nonlocal because of the term $\J$, which is averaged over the spatial domain. Ignoring the $\J$ term and replacing  $\mu $ equation in \eqref{1.1}  by $ \mu = \Delta \varphi + \F' (\varphi), $ one obtains the model which is a local version of Cahn-Hilliard-Navier-Stokes' system. 
	The nonlocal version is mathematically challenging, moreover, it is physically more relevant.

	For the past several years, many mathematicians and physicists have studied various models including some simplified versions of the general model mentioned above.  One of the simplified models is to assume the mobility parameter $m$ and the viscosity coefficient $\nu$ to be constants. The system \eqref{1.1} is difficult to tackle because of the non linear coupling between $\mu$ and $\phi$ namely the term $\mu \nabla \varphi$, which gives rise to \emph{Korteweg force}  acting on the fluid.  Even in two dimensions, this term can be less regular than the convective term $(\u\cdot\nabla)\u$ (see \cite{weak}). 
	Recent years have seen a lot of works towards the establishment of the existence and uniqueness  of simplified Cahn-Hilliard-Navier-Stokes systems as well as of the system (\ref{1.1}). In the literature various authors have proved the existence of weak solutions, strong solutions results for these equations (see \cite{exist local chns,weak,unique,strong}) in dimensions 2 and 3. The uniqueness for the system \eqref{1.1} is established in \cite{unique} for dimension 2. However, in dimension 3,  similar to  the case of Navier-Stokes equations, the uniqueness of the weak solution  for the nonlocal Cahn-Hilliard-Navier-Stokes system remains open though the existence is obtained in \cite{weak}. 
	
	Control of partial differential equations in general and fluid flow problems in particular have  several engineering  applications (see for example \cite{fursikov,gunzburger,sritharan}). Controlling fluid flow and turbulence inside a flow in a given physical domain, with various means, for example, body forces, boundary values, temperature (cf. \cite{FART,sritharan} etc), is an interesting problem in fluid mechanics.  Another interesting control problem is to find an optimal controlled initial data with a given external forcing such that a suitable cost functional is minimized (see \cite{sritharan}). Such problems are well studied in meteorology and are an inevitable part of data assimilation problems. The mathematical developments in infinite dimensional nonlinear system theory and partial differential equations in the past several decades, have helped to resolve many control problems for fluid flow equations. Such problems are extensively addressed in \cite{FART,lions,fursikov,gunzburger,sritharan} etc,  to name a few. All these problems mostly deal with the celebrated Navier-Stokes' equations. 
	
	The optimal control problems related to Cahn-Hilliard system  are studied by several mathematicians and some of the results can be found in   \cite{boundary,optimal control for ch eq, optimal control for ch eq with state constraints}. Turning to the optimal control problem for the Cahn-Hilliard-Navier-Stokes' type equations, some works are available in the literature. Optimal control problem with state constraint and robust control for local Cahn-Hilliard-Navier-Stokes' system are investigated in \cite{robust, state const}, respectively. In \cite{BDM}, authors have studied a distributed control problem for Nonlocal Cahn-Hilliard-Navier-Stokes system and established the Pontryagin’s maximum principle. They have characterized the optimal control using the adjoint variable. A similar kind of problem is examined in \cite{ControlCHNS}, where the authors prove the optimality condition under some restrictive condition on the spatial operator $\J$ (see (H4) in \cite{ControlCHNS}). An optimal distributed control problem for two-dimensional nonlocal Cahn-Hilliard-Navier-Stokes systems with degenerate mobility and singular potential is described in \cite{SFMG}. An optimal distributed control of a diffuse interface model of tumor growth is considered in \cite{tumor}. The model studied in \cite{tumor} is  a kind of local Cahn-Hilliard-Navier-Stokes type system with some additional conditions on $\F$.
	
	In  the current work our aim is to study two optimal control problems related to the nonlocal system (\ref{nonlin phi})-(\ref{initial conditions}) (see below) assuming that $m$ and $\nu$ are independent of $\varphi$ and are assumed to be constants (see section \ref{sec2}). As it is necessary to know the solvability and uniqueness of solution to discuss the optimality principle, we restrict ourselves to dimension 2,  where the existence of weak and strong solutions along with uniqueness is known. The two main problems we are considering in this work are:
	\begin{itemize}
		\item [(i)] An enstrophy minimization problem.
		\item [(ii)] A data assimilation problem. 
	\end{itemize} 
	
	The first problem that we consider is a distributed optimal control problem, which aims to minimize the cost functional which involves enstrophy of the velocity field, $\L^2$-energy of the relative concentration and total effort by the control.  As compared to  optimal control problems studied in the literature, here we consider more difficult cost functional which involves enstrophy minimization. This changes the adjoint equations considerably and existence of solution for the linearized adjoint system needs to be established. We use Ekeland's variational principle to prove existence of approximate optimal trajectories and controls. In the seminal paper \cite{EKE}, Ekeland established a variational principle  (see Theorem 1.1, \cite{EKE}) and used it to obtain a necessary condition for the optimal control of a system of ordinary differential equations (see Theorem 7.1, \cite{EKE}). Later, several mathematicians used this principle to establish the first order necessary conditions of optimality for optimal control problems involving infinite dimensional systems as constraints (see Chapter 4, \cite{LIYO} and references therein).  The works \cite{GW,GWLW} made use of this variational principle to establish the Pontryagin maximum principle of optimal control governed by fluid dynamic systems, namely 3D incompressible Navier-Stokes equations. Recently, in the paper \cite{SDMTS}, the authors established the first order necessary conditions of optimality for the optimal control of linearized compressible Navier-Stokes equations using this variational principle.
	
	The concept of \emph{data assimilation} can be defined as the set of statistical techniques that enable us to improve knowledge of the past, present or future system states, jointly using experimental data and the theoretical (a-priori) knowledge on the system. It is also a notion incorporating any method for combining observations of states such as temperature and atmospheric pressure into models used in numerical weather prediction. It is well known that the atmosphere is a fluid and the concept of numerical weather prediction is to sample the state of the fluid at a given time, and use  fluid dynamics equations and thermodynamics equations to estimate the state of the fluid at some time in the future. The process of entering observational data into the model to generate initial conditions is called \emph{initialization}. The second problem which we study is a data assimilation type problem (optimization of the initial velocity field), where the cost functional is $\L^2$-energy  of the velocity, concentration and total efforts by the control in appropriate spaces. A data assimilation problem for the case of 2D incompressible Navier-Stokes equations is described in  \cite{sritharan}.  We establish the  first order optimality condition namely, Pontryagin maximum principle, for the corresponding  cost functional using variational technique and characterize optimal control in terms of adjoint variable. 
In both these control problems,  we need a higher order  regularity of the solution to obtain these results. The unique global strong solution of the system established in \cite{strong}  helps us to achieve this goal.

	The novelty of this work lies in studying optimal control  problem for turbulence minimization of Cahn-Hilliard-Navier-Stokes' system, which exhibits strong nonlinear coupling between velocity field and relative concentration of the fluid. Pontryagin maximum principle is proved via Ekeland's variational principle which first gives the $\varepsilon$ optimal solution and the existence of a minimizer then guarantees the optimal solution. Moreover, to the best of authors knowledge, the initial value estimation problem (data assimilation)  studied in the later part of the paper is completely new and such a problem has not been studied elsewhere for Cahn-Hilliard-Navier-Stokes' system.

	The organization of the paper is as follows. In the next section, we  explain the  two dimensional Cahn-Hilliard-Navier-Stokes system and discuss the necessary functional settings to obtain the existence and uniqueness of (weak and strong) solution for such systems. We also state a few of the well known estimates in the form of lemmas and relevant existence uniqueness theorems available in the literature.  A distributed optimal control problem is formulated in section \ref{se4} and the optimal control is characterized using the adjoint variable. The unique solvability results for the  linearized system and its adjoint is also discussed in this section. We establish the first-order necessary conditions of optimality by proving Pontryagin maximum principle for optimal control of such systems via the Ekeland variational principle (see Theorem \ref{3.10} and \ref{optimal}). In section \ref{se5}, we have incorporated a data assimilation type of problem. We prove the existence of an optimal control and its characterization via adjoint method, where control is assumed to be acting as the initial data (see Theorem \ref{optimal1} and \ref{data}).

	\section{Mathematical Formulation}\label{sec2}\setcounter{equation}{0} In this section, we mathematically formulate the two dimensional Cahn-Hilliard-Navier-Stokes system and discuss the necessary function spaces required to obtain the global solvability results for such systems. We mainly follow the papers \cite{weak,unique} for the mathematical formulation and functional setting. 
	
	A well known model which describes the evolution of an incompressible isothermal mixture of two immiscible fluids is governed by  \emph{Cahn-Hilliard-Navier-Stokes system} (see \cite{weak}). We consider the Cahn-Hilliard-Navier-Stokes equations  which consist of the incompressible Navier–Stokes equations governing the fluid velocity field coupled with a convective Cahn–Hilliard equation for the relative concentration of one of the fluids. Let the average velocity of the fluid  is denoted by $\u(x,t)$ and the relative concentration of the fluid is denoted by $\varphi(x,t),$ for $(x,t)\in\Omega\times(0,T)$, where $\Omega \subset \mathbb{R}^2$ is a bounded domain with sufficiently smooth boundary. Let us denote $\mathbf{n}$ as the unit outward normal to the boundary $\partial\Omega$. 
	Since the aim of this paper is to study some optimal control problems, we first consider the following controlled Cahn-Hilliard-Navier-Stokes system: 
	\begin{subequations}
		\begin{align}
		\varphi_t + \u\cdot \nabla \varphi &= \Delta \mu, \label{nonlin phi}\ \text{ in }\ \Omega\times(0,T),\\
		\mu &= a \varphi - \J\ast \varphi + \F'(\varphi), \label{mu}\ \text{ in }\ \Omega\times(0,T),\\
		\u_t - \nu \Delta \u + (\u\cdot \nabla )\u + \nabla \uppi &= \mu \nabla \varphi + \mathbf{h} +\U, \label{nonlin u}\ \text{ in }\ \Omega\times(0,T),\\
		\text{div }\u&= 0, \ \text{ in }\ \Omega\times(0,T), \label{div zero}\\
		\frac{\partial \mu}{\partial \mathbf{n}} &= \mathbf{0}  , \ \u=\mathbf{0} \ \text{ on } \ \partial \Omega \times (0,T),\label{boundary conditions}\\
		\u(0) &= \u_0, \  \ \varphi(0) = \varphi _0 \ \text{ in } \ \Omega, \label{initial conditions}
		\end{align}   
	\end{subequations}
	where   $\U$ is the \emph{distributed control} acting in the system and the coefficient of kinematic viscosity $\nu$ is a constant. Note that $\nu$ and $\mu$ are independent of $\varphi$ and are assumed to be constants. 
	
	\subsection{Functional setting}
	Let us introduce the following function spaces and operators required for getting the unique global solvability results of the system (\ref{nonlin phi})-(\ref{initial conditions}). Note that on the boundary, we are not prescribing any condition (Dirichlet, Neumann, etc) on the value of  $\varphi$, the relative concentration of one of the  fluid. Instead we impose a Neumann boundary condition for the chemical potential $\mu$ on the boundary. Let us define
	\begin{align*}\G_{\text{div}} &:= \Big\{ \u \in \mathrm{L}^2(\Omega;\R^2) : \text{div }\u=0, \u\cdot \mathbf{n}\big|_{\partial\Omega}=0\Big\}, \\
	\V_{\text{div}} &:= \Big\{\u \in \mathrm{H}^1_0(\Omega;\R^2): \text{div }\u=0\Big\},\\ \mathrm{H}&:=\mathrm{L}^2(\Omega;\R),\ \mathrm{V}:=\mathrm{H}^1(\Omega;\R). \end{align*}
	Let us denote $\| \cdot \|$ and $(\cdot, \cdot),$ the norm and the scalar product, respectively, on both $\mathrm{H}$ and $\G_{\text{div}}$. The norms on $\G_{\text{div}}$ and $\mathrm{H}$ are given by $\|\u\|^2:=\int_{\Omega}|\u(x)|^2\d x$ and $\|\varphi\|^2:=\int_{\Omega}|\varphi(x)|^2\d x$, respectively. The duality between $\V_{\text{div }}$ and its topological dual $\V_{\text{div}}'$ is denoted by $\langle\cdot,\cdot\rangle$. We know that $\V_{\text{div}}$ is endowed with the scalar product 
	$$(\u,\v)_{\V_{\text{div }}}= (\nabla \u, \nabla \v)=2(\mathrm{D}\u,\mathrm{D}\v)\ \text{ for all }\ \u,\v\in\V_{\text{div}}.$$ The norm on $\V_{\text{div }}$ is given by $\|\u\|_{\V_{\text{div }}}^2:=\int_{\Omega}|\nabla\u(x)|^2\d x=\|\nabla\u\|^2$. In the sequel, we use the notations $\mathbb{H}^2(\Omega):=\mathrm{H}^2(\Omega;\mathbb{R}^2)$ and  $\mathrm{H}^2(\Omega):=\mathrm{H}^2(\Omega;\mathbb{R})$ for second order Sobolev spaces.

	\subsection{Linear and nonlinear operators}
	Let us define the \emph{Stokes operator} $\A : \D(\A)\cap  \G_{\text{div}} \to \G_{\text{div}}$ by 
	\begin{align}
	\label{stokes}
	\A:=-\mathrm{P}_{\G}\Delta,\ \D(\A)=\mathbb{H}^2(\Omega) \cap \V_{\text{div}},\end{align} where $\mathrm{P}_{\G} : \mathbb{L}^2(\Omega) \to \G_{\text{div}}$ is the \emph{Helmholtz-Hodge orthogonal projection}. We also have
	$$\langle\A\u, \v\rangle = (\u, \v)_{\V_{\text{div}}} = (\nabla\u, \nabla\v) \  \text{ for all } \ \u \in\D(\A), \v \in \V_{\text{div}}.$$
	It should also be noted that  $\A^{-1} : \G_{\text{div}} \to \G_{\text{div }}$ is a self-adjoint compact operator on $\G_{\text{div}}$ and by
	the classical spectral theorem, there exists a sequence $\lambda_j$ with $0<\lambda_1\leq \lambda_2\leq \lambda_j\leq\cdots\to+\infty$
	and a family of $\mathbf{e}_j \in \D(\A)$ which is orthonormal in $\G_\text{div}$ and such that $\A\mathbf{e}_j =\lambda_j\mathbf{e}_j$. We know that $\u$ can be expressed as $\u=\sum\limits_{j=1}^{\infty}\langle \u,\mathbf{e}_j\rangle \mathbf{e}_j,$ so that $\A\u=\sum\limits_{j=1}^{\infty}\lambda_j\langle \u,\mathbf{e}_j\rangle \mathbf{e}_j$. Thus, it is immediate that 
	\begin{align}
	\|\nabla\u\|^2=\langle \A\u,\u\rangle =\sum_{j=1}^{\infty}\lambda_j|\langle \u,\mathbf{e}_j\rangle|^2\geq \lambda_1\sum_{j=1}^{\infty}|\langle \u,\mathbf{e}_j\rangle|^2=\lambda_1\|\u\|^2,
	\end{align} 
	which is the \emph{Poincar\'e inequality}.

	For $\u,\v,\w \in \V_{\text{div}},$ we define the trilinear operator $b(\cdot,\cdot,\cdot)$ as
	$$b(\u,\v,\w) = \int_\Omega (\u(x) \cdot \nabla)\v(x) \cdot \w(x)\d x=\sum_{i,j=1}^2\int_{\Omega}u_i(x)\frac{\partial v_j(x)}{\partial x_i}w_j(x)\d x,$$
	and the bilinear operator $\B:\V_{\text{div}} \times \V_{\text{div}} \to\V_{\text{div}}'$ defined by,
	$$ \langle \B(\u,\v),\w  \rangle = b(\u,\v,\w) \  \text{ for all } \ \u,\v,\w \in \V_\text{{div}}.$$
	An integration by parts yields, 
	\begin{equation}\label{best}
	\left\{
	\begin{aligned}
	b(\u,\v,\v) &= 0, \ \text{ for all } \ \u,\v \in\V_\text{{div}},\\
	b(\u,\v,\w) &=  -b(\u,\w,\v), \ \text{ for all } \ \u,\v,\w\in \V_\text{{div}}.
	\end{aligned}
	\right.\end{equation}
	Using (\ref{best}), H\"older and Ladyzhenskaya inequalities (see Lemma \ref{lady} below), for every $\u,\v,\w \in \V_{\text{div}},$ we have the following estimate:
	\begin{align}
	|b(\u,\v,\w)|&= |b(\u,\w,\v)|\leq \|\u\|_{\mathbb{L}^4}\|\nabla\w\|\|\v\|_{\mathbb{L}^4}\leq \sqrt{2}\|\u\|^{1/2}\| \nabla \u\|^{1/2}\|\v\|^{1/2}\| \nabla \v\|^{1/2}\| \w\|_{\V_{\text{div }}}.
	\end{align}
	Thus for all $\u\in\V_{\text{div}},$ we have
	\begin{align}
	\label{be}
	\|\B(\u,\u)\|_{\V_{\text{div}}'}\leq \sqrt{2}\|\u\|\|\nabla\u\|\leq \sqrt{\frac{2}{\lambda_1}}\|\u\|_{\V_{\text{div}}}^2 ,
	\end{align}
	by using the Poincar\'e inequality. 
	For more details about the linear and nonlinear operators defined above, we refer the readers to \cite{Te}.

	For every $f \in \mathrm{V}',$ we denote $\overline{f}$ the average of $f$ over $\Omega$, i.e., $\overline{f} := |\Omega|^{-1}\langle f, 1 \rangle$, where $|\Omega|$ is the Lebesgue measure of $\Omega$. 
	Let us also introduce the spaces (see \cite{unique})
	\begin{align*}\mathrm{V}_0 &= \{ v \in \mathrm{V} \ : \ \overline{v} = 0 \},\\
	\mathrm{V}_0' &= \{ f \in \mathrm{V}' \ : \ \overline{f} = 0 \},\end{align*}
	and the operator $\mathcal{A} : \mathrm{V} \rightarrow \mathrm{V}'$ is defined by
	\begin{align*}\langle \mathcal{A} u ,v \rangle := \int_\Omega \nabla u(x) \cdot \nabla v(x) \d x \  \text{for all } \ u,v \in \mathrm{V}.\end{align*}
	Clearly $\mathcal{A}$ is linear and it maps $\mathrm{V}$ into $\mathrm{V}_0'$ and its restriction $\mathcal{B}$ to $\mathrm{V}_0$ onto $\mathrm{V}_0'$ is an isomorphism.   
	We know that for every $f \in \mathrm{V}_0'$, $\mathcal{B}^{-1}f$ is the unique solution with zero mean value of the \emph{Neumann problem}:
	$$
	\left\{
	\begin{array}{ll}
	- \Delta u = f, \  \mbox{ in } \ \Omega, \\
	\frac{\partial u}{\partial\mathbf{n}} = 0, \ \mbox{ on } \  \partial \Omega.
	\end{array}
	\right.
	$$
	In addition, we have
	\begin{align} \langle \mathcal{A}u , \mathcal{B}^{-1}f \rangle &= \langle f ,u \rangle, \ \text{ for all } \ u\in \mathrm{V},  \ f \in \mathrm{V}_0' , \label{bes}\\
	\langle f , \mathcal{B}^{-1}g \rangle &= \langle g ,\mathcal{B}^{-1}f \rangle = \int_\Omega \nabla(\mathcal{B}^{-1}f)\cdot \nabla(\mathcal{B}^{-1}g)\d x, \ \text{for all } \ f,g \in \mathrm{V}_0'.\label{bes1}
	\end{align}
	Note that $\mathcal{B}$ can be also viewed as an unbounded linear operator on $\mathrm{H}$ with
	domain $$\D(\mathcal{B}) = \left\{v \in \mathrm{H}^2 : \frac{\partial v}{\partial\mathbf{n}}= 0\text{ on }\partial\Omega \right\}.$$

	\subsection{Some useful inequalities} Let us now list some useful  interpolation inequalities, which we use frequently in the sequel. Let us first give a version of the Gagliardo-Nirenberg inequality which holds true for all $\u\in\mathbb{W}^{1,p}_0(\Omega;\R^n),p\geq 1$. 
	\begin{lemma}[Gagliardo-Nirenberg inequality, Theorem 2.1, \cite{ED}] \label{gn}
		Let $\Omega\subset\R^n$ and $\u\in\mathbb{W}^{1,p}_0(\Omega;\R^n),p\geq 1$. Then for any fixed number $1\leq q,r\leq \infty$, there exists a constant $C>0$ depending only on $n,p,q$ such that 
		\begin{align}\label{gn0}
		\|\u\|_{\mathbb{L}^r}\leq C\|\nabla\u\|_{\mathbb{L}^p}^{\theta}\|\u\|_{\mathbb{L}^q}^{1-\theta},\ \theta\in[0,1],
		\end{align}
		where the numbers $p, q, r$ and $\theta$ satisfy the relation
		$$\theta=\left(\frac{1}{q}-\frac{1}{r}\right)\left(\frac{1}{n}-\frac{1}{p}+\frac{1}{q}\right)^{-1}.$$
	\end{lemma}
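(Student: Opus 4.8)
The statement is the classical Gagliardo-Nirenberg interpolation inequality, and the plan is to deduce it from the endpoint Sobolev embedding together with a single application of H\"older's interpolation inequality. First I would reduce from vector- to scalar-valued functions: since the $\mathbb{L}^r$ norms are computed from the Euclidean length $|\u(x)|$, and since $\big|\nabla|\u|\big|\le|\nabla\u|$ almost everywhere (apply the chain rule to the regularized modulus $\sqrt{|\u|^2+\delta^2}$ and let $\delta\to0$), it suffices to prove the estimate for the nonnegative scalar function $|\u|\in\mathrm{W}^{1,p}_0(\Omega)$. Extending by zero across $\partial\Omega$, which is legitimate because the trace vanishes, lets me treat $|\u|$ as an element of $\mathrm{W}^{1,p}(\R^n)$ and reduces everything to the whole-space scalar case.

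Next I would establish the Gagliardo-Nirenberg-Sobolev inequality. Approximating by functions in $\mathrm{C}^1_c$ and starting at $p=1$, the bound $|v(x)|\le\int_{\R}|\partial_i v|\,\d t$ (from the fundamental theorem of calculus in the $i$-th direction) holds for each $i$; multiplying these $n$ estimates, raising to the power $1/(n-1)$, and integrating coordinate by coordinate with the generalized Loomis-Whitney H\"older inequality yields $\|v\|_{L^{n/(n-1)}}\le\prod_{i=1}^n\|\partial_i v\|_{L^1}^{1/n}\le C\|\nabla v\|_{L^1}$. For general $1\le p<n$ I would apply this to $|v|^{\gamma}$ with $\gamma=p(n-1)/(n-p)$ and absorb the extra factor by H\"older, arriving at $\|v\|_{L^{p^*}}\le C\|\nabla v\|_{L^p}$ with $\tfrac{1}{p^*}=\tfrac1p-\tfrac1n$.

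The interpolation step then finishes the subcritical range. H\"older's interpolation inequality gives $\|v\|_{L^r}\le\|v\|_{L^q}^{1-\theta}\|v\|_{L^{p^*}}^{\theta}$ whenever $\tfrac1r=\tfrac{1-\theta}{q}+\tfrac{\theta}{p^*}$ with $\theta\in[0,1]$, and substituting the Sobolev bound for the $L^{p^*}$ factor produces the asserted inequality. Solving the exponent identity for $\theta$, using $\tfrac{1}{p^*}=\tfrac1p-\tfrac1n$, yields exactly $\theta=\big(\tfrac1q-\tfrac1r\big)\big(\tfrac1n-\tfrac1p+\tfrac1q\big)^{-1}$, which is the stated relation, with $C=C(n,p,q)$.

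The hard part will be the case $p\ge n$, where the endpoint exponent $p^*$ is unavailable and the interpolation step cannot be run verbatim. For $p=n$ I would replace $p^*$ by an arbitrarily large finite exponent, exploiting that $\mathrm{W}^{1,n}$ embeds in every $L^s$ with $s<\infty$, and let $s$ grow; for $p>n$ I would use the Morrey embedding $\mathrm{W}^{1,p}_0\hookrightarrow L^\infty$ in place of the Sobolev step and interpolate between $L^q$ and $L^\infty$. In each regime one must recheck that the admissible values of $\theta$ and the resulting exponent relation still reproduce the stated formula, and verify the degenerate endpoints $\theta=0$ (forcing $r=q$) and $\theta=1$. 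Keeping track of these boundary constraints, rather than the core interpolation, is where the genuine care is needed.
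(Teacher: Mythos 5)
The paper offers no proof of this lemma at all: it is quoted from DiBenedetto (Theorem 2.1 of \cite{ED}), and immediately after Lemma \ref{GNI} the authors observe that \eqref{gn0} is the special case $j=0$, $m=1$ of Nirenberg's general interpolation inequality \eqref{gn1}, the additive term $\|\u\|_{\mathbb{L}^s}$ (with $\frac{1}{s}=\frac{\theta}{p}+\frac{1-\theta}{q}$) being absorbed by H\"older interpolation together with the Poincar\'e inequality, which is available because of the zero boundary values. Your proposal takes a genuinely different, self-contained route: reduction to nonnegative scalar functions via $\big|\nabla|\u|\big|\le|\nabla\u|$, extension by zero, the fundamental-theorem-of-calculus/Loomis--Whitney proof of the $\mathrm{L}^1\to \mathrm{L}^{n/(n-1)}$ Sobolev inequality, the power trick for $1\le p<n$, and a single H\"older interpolation. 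For the subcritical range $1\le p<n$ this is complete and correct, the exponent bookkeeping does reproduce the stated $\theta$, and it is arguably more informative than the paper's citation; what the paper's route buys instead is that all exponent cases are delegated to Nirenberg's theorem at once.

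The gap is precisely where you locate "the hard part," and it is not a side issue for this paper. For $p>n$ you cannot literally use "the Morrey embedding $\mathrm{W}^{1,p}_0\hookrightarrow \mathrm{L}^\infty$ in place of the Sobolev step": the inequality $\|v\|_{\mathrm{L}^\infty}\le C\|\nabla v\|_{\mathrm{L}^p}$ is false on $\R^n$ by scaling (with $v_\lambda(x)=v(\lambda x)$ the left side is unchanged while the right side scales like $\lambda^{1-n/p}\to0$ as $\lambda\to0$), and on a bounded domain its constant depends on $\Omega$, contradicting the claimed dependence on $n,p,q$ only. What you need is the multiplicative endpoint $\|v\|_{\mathrm{L}^\infty}\le C\|\nabla v\|_{\mathrm{L}^p}^{\theta_\infty}\|v\|_{\mathrm{L}^q}^{1-\theta_\infty}$ with $\theta_\infty=\left(\frac{1}{q}\right)\left(\frac{1}{n}-\frac{1}{p}+\frac{1}{q}\right)^{-1}$, which is itself the $r=\infty$ case of the lemma and needs its own scale-balanced Morrey-type proof before interpolating against $\mathrm{L}^q$. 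Similarly, at $p=n$ "letting the exponent grow" must be replaced by applying the $\mathrm{L}^1$-Sobolev inequality to $|v|^\gamma$ and absorbing, and the constant then unavoidably depends on $r$; indeed the statement as quoted is false at the endpoint $p=n$, $r=\infty$, $\theta=1$ (it would assert $\mathrm{W}^{1,n}_0\hookrightarrow\mathrm{L}^\infty$), which is exactly why DiBenedetto's theorem restricts the admissible range of $\theta$ when $p\ge n>1$. Note finally that the only use the paper makes of this lemma is Ladyzhenskaya's inequality, and the two-dimensional case $n=p=q=2$, $r=4$ sits precisely in the borderline regime $p=n$ that your sketch defers, so that case has to be carried out in full rather than left as an endpoint check.
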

	The particular cases of Lemma \ref{gn} are well known inequalities, due to Ladyzhenskaya (see Lemma 1 and 2, Chapter 1, \cite{OAL}), which is given below.
	\begin{lemma}[Ladyzhenskaya inequality]\label{lady}
		For $\u\in\ \C_0^{\infty}(\Omega;\R^n), n = 2, 3$, there exists a constant $C$ such that
		\begin{align}
		\|\u\|_{\mathbb{L}^4}\leq C^{1/4}\|\u\|^{1-\frac{n}{4}}\|\nabla\u\|^{\frac{n}{4}},\text{ for } n=2,3,
		\end{align}
		where $C=2,4$ for $n=2,3$ respectively. 
	\end{lemma}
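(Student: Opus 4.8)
The plan is to obtain the inequality as a special case of the Gagliardo--Nirenberg inequality (Lemma \ref{gn}) for its structural form, and then to recover the explicit constants $C=2$ and $C=4$ by a direct elementary slicing argument. Taking $p=q=2$ and $r=4$ in \eqref{gn0}, the exponent relation yields $\theta=\left(\frac12-\frac14\right)\left(\frac1n-\frac12+\frac12\right)^{-1}=\frac{n}{4}$ for both $n=2$ and $n=3$, which already gives $\|\u\|_{\mathbb{L}^4}\le C\,\|\u\|^{1-n/4}\|\nabla\u\|^{n/4}$ with \emph{some} constant $C=C(n)$. Since this route does not pin down the precise values $2$ and $4$, I would prove the estimate from scratch. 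As $\u\in\C_0^{\infty}(\Omega;\R^n)$, I extend it by zero to all of $\R^n$ (preserving all the norms involved, which are supported in $\Omega$), so that no boundary terms arise in the one-dimensional integrations below.

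The core pointwise bound is that, for each coordinate direction $i$,
\begin{align*}
\abs{\u(x)}^2=\int_{-\infty}^{x_i}2\,\u\cdot\partial_i\u\,\d\xi_i\le 2\int_{\R}\abs{\u}\,\abs{\partial_i\u}\,\d x_i,
\end{align*}
valid because $\u$ is smooth with compact support. For $n=2$, I would multiply the bounds for $i=1$ and $i=2$ (the right-hand sides depending only on $x_2$ and $x_1$ respectively), integrate over $\R^2$, and apply Fubini together with the Cauchy--Schwarz inequality to obtain $\int_{\R^2}\abs{\u}^4\,\d x\le 4\,\|\u\|\,\|\partial_1\u\|\cdot\|\u\|\,\|\partial_2\u\|$. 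Young's inequality $\|\partial_1\u\|\,\|\partial_2\u\|\le\frac12(\|\partial_1\u\|^2+\|\partial_2\u\|^2)=\frac12\|\nabla\u\|^2$ then gives $\|\u\|_{\mathbb{L}^4}^4\le 2\,\|\u\|^2\|\nabla\u\|^2$, which is the case $C=2$.

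For $n=3$, I would reduce to the planar estimate by slicing. Applying the two-dimensional inequality on each slice $\{x_3=\text{const}\}$ gives $\int_{\R^2}\abs{\u}^4\,\d x_1\d x_2\le 2\,A(x_3)\,B(x_3)$, where $A(x_3):=\int_{\R^2}\abs{\u}^2\,\d x_1\d x_2$ and $B(x_3):=\int_{\R^2}(\abs{\partial_1\u}^2+\abs{\partial_2\u}^2)\,\d x_1\d x_2$. A one-dimensional integration in $x_3$ shows $\sup_{x_3}A(x_3)\le 2\,\|\u\|\,\|\partial_3\u\|$, while $\int_\R B(x_3)\,\d x_3=\|\partial_1\u\|^2+\|\partial_2\u\|^2\le\|\nabla\u\|^2$. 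Combining, $\int_{\R^3}\abs{\u}^4\,\d x\le 2\big(\sup_{x_3}A\big)\int_\R B\,\d x_3\le 4\,\|\u\|\,\|\partial_3\u\|\,\|\nabla\u\|^2\le 4\,\|\u\|\,\|\nabla\u\|^3$, which is the case $C=4$.

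The step I expect to require the most care is the tracking of the explicit constants, since the Gagliardo--Nirenberg route yields only an unspecified $C(n)$; the sharp values $2$ and $4$ emerge only from the elementary argument above, and in particular from the slicing reduction in three dimensions, where $\sup_{x_3}A(x_3)$ must be controlled by the full $\mathbb{L}^2$ norms rather than by the slice integrals. The vector-valued character of $\u$ is handled transparently by working with $\abs{\u}$ and the directional quantities $\abs{\partial_i\u}$ together with the identity $\sum_i\|\partial_i\u\|^2=\|\nabla\u\|^2$, so it poses no genuine difficulty.
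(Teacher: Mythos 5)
Your proof is correct. Note that the paper itself does not actually prove this lemma: it presents it as a particular case of the Gagliardo--Nirenberg inequality (Lemma \ref{gn}) and refers to Lemmas 1 and 2, Chapter 1 of Ladyzhenskaya's monograph \cite{OAL}, so your proposal supplies an argument where the paper offers a citation. Your first step coincides with the paper's point of view: specializing \eqref{gn0} with $p=q=2$, $r=4$ gives $\theta=n/4$, hence the structural form of the inequality, but, as you correctly observe, only with an unspecified constant $C(n)$; this route cannot justify the stated values $C=2$ and $C=4$. Your direct argument --- the pointwise bound $|\u(x)|^2\le 2\int_{\R}|\u|\,|\partial_i\u|\,\d x_i$, then Fubini and Cauchy--Schwarz plus Young's inequality for $n=2$, and for $n=3$ the reduction to planar slices together with the control of the supremum over $x_3$ of $\int_{\R^2}|\u|^2\,\d x_1\,\d x_2$ by $2\|\u\|\,\|\partial_3\u\|$ --- is precisely the classical proof from the cited reference, and it does deliver $\|\u\|_{\mathbb{L}^4}^4\le 2\|\u\|^2\|\nabla\u\|^2$ and $\|\u\|_{\mathbb{L}^4}^4\le 4\|\u\|\,\|\nabla\u\|^3$, matching the lemma exactly, including the constants. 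So where the paper buys brevity by deferring to the literature, your argument buys self-containedness and, more importantly, the explicit constants $C=2,4$ that the Gagliardo--Nirenberg specialization alone cannot produce.
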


	We also use the following general version of the Gagliardo-Nirenberg interpolation inequality and Agmon's inequality for higher order estimates. For functions $\u: \Omega\to\R$ defined on a bounded Lipschitz domain $\Omega\subset\R^n$, the Gagliardo-Nirenberg interpolation inequality is given by: 
	\begin{lemma}[Gagliardo-Nirenberg interpolation inequality, Theorem 1, \cite{LN}]\label{GNI} Let $\Omega\subset\R^n$, $\u\in\mathbb{W}^{m,p}(\Omega;\R^n), p\geq 1$ and fix $1 \leq q, r \leq \infty$ and a natural number $m$. Suppose also that a real number $\theta$ and a natural number $j$ are such that
		\begin{align}
		\label{theta}
		\theta=\left(\frac{j}{n}+\frac{1}{q}-\frac{1}{r}\right)\left(\frac{m}{n}-\frac{1}{p}+\frac{1}{q}\right)^{-1}\end{align}
		and
		$\frac{j}{m} \leq \theta \leq 1.$ Then for any $\u\in\mathbb{W}^{m,p}(\Omega;\R^n),$ we have 
		\begin{align}\label{gn1}
		\|\nabla^j\u\|_{\mathbb{L}^r}\leq C\left(\|\nabla^m\u\|_{\mathbb{L}^p}^{\theta}\|\u\|_{\mathbb{L}^q}^{1-\theta}+\|\u\|_{\mathbb{L}^s}\right),
		\end{align}
		where $s > 0$ is arbitrary and the constant $C$ depends upon the domain $\Omega,m,n$. 
	\end{lemma}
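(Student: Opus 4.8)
The plan is to reduce the estimate on the bounded Lipschitz domain $\Omega$ to the purely multiplicative inequality on the whole space $\R^n$, where no additive term is needed, and then to transfer the result back by means of a bounded extension operator; the lower-order term $\|\u\|_{\mathbb{L}^s}$ will appear precisely as the cost of extending a field that need not vanish on $\partial\Omega$. Before that, I would fix the exponent $\theta$ by a scaling argument: replacing $\u$ by its dilation $\u_\lambda(x):=\u(\lambda x)$ and tracking how each of $\|\nabla^j\u\|_{\mathbb{L}^r}$, $\|\nabla^m\u\|_{\mathbb{L}^p}$ and $\|\u\|_{\mathbb{L}^q}$ scales in $\lambda$, one sees that a multiplicative bound can hold only if the powers of $\lambda$ balance, and this forces exactly the relation \eqref{theta}. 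This does not prove the inequality, but it explains the formula for $\theta$ and certifies that the two sides share the same homogeneity.

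Next I would establish the whole-space version for smooth compactly supported fields, namely
\[
\|\nabla^j\u\|_{\mathbb{L}^r(\R^n)}\le C\,\|\nabla^m\u\|_{\mathbb{L}^p(\R^n)}^{\theta}\,\|\u\|_{\mathbb{L}^q(\R^n)}^{1-\theta},
\]
by induction on $m$. The base case $m=1$ is the Gagliardo--Nirenberg--Sobolev inequality $\|\u\|_{\mathbb{L}^{p^*}}\le C\|\nabla\u\|_{\mathbb{L}^p}$ with $\tfrac{1}{p^*}=\tfrac1p-\tfrac1n$, obtained from the fundamental theorem of calculus applied in each coordinate direction together with repeated H\"older estimates; interpolating between $\mathbb{L}^q$ and $\mathbb{L}^{p^*}$ then yields the case $j=0$, $m=1$ for general $r$. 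For higher $m$ I would control the intermediate derivatives by an elementary single-variable estimate of the type $\|f'\|_{L^2}^2\le\|f\|_{L^\infty}\|f''\|_{L^1}$, proved by integration by parts, bootstrap it to bound $\nabla^j\u$ through $\nabla^m\u$ and $\u$, and split the outcome into the stated product by one further application of H\"older's inequality.

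To pass to $\Omega$, I would invoke the Stein extension operator $E:\mathbb{W}^{m,p}(\Omega)\to\mathbb{W}^{m,p}(\R^n)$, available because $\Omega$ is bounded and Lipschitz, which satisfies $\|E\u\|_{\mathbb{W}^{k,p}(\R^n)}\le C\|\u\|_{\mathbb{W}^{k,p}(\Omega)}$ for $0\le k\le m$. After multiplying $E\u$ by a fixed cutoff to make it compactly supported, I would apply the whole-space inequality and restrict back to $\Omega$. The right-hand side then carries the full norms $\|E\u\|_{\mathbb{W}^{m,p}}$ and $\|E\u\|_{\mathbb{L}^q}$; estimating these through the extension bound and collecting the lower-order contributions produces the additive term $\|\u\|_{\mathbb{L}^s}$ with $s>0$ arbitrary.

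The main obstacle I anticipate is exactly this last step. On a bounded domain, with no boundary condition imposed on $\u$, the purely multiplicative form genuinely fails---nonzero constants are a counterexample whenever $\nabla^m$ annihilates them while $\|\u\|_{\mathbb{L}^q}$ and $\|\u\|_{\mathbb{L}^r}$ stay finite---so the additive term cannot be dropped, and one must check with some care that every cross-term produced by the cutoff and the extension is dominated by $\|\u\|_{\mathbb{L}^s}$. The second delicate point is to keep the admissibility constraint $\tfrac{j}{m}\le\theta\le1$ in force throughout the induction, since it is precisely what guarantees that all the intermediate H\"older exponents remain in the legal range $[1,\infty]$.
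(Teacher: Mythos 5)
This lemma is not proved in the paper at all: it is imported verbatim from Nirenberg (Theorem 1 of \cite{LN}) and used as a known tool, so there is no internal argument to compare yours against; what follows judges your sketch on its own terms. Your outline follows the standard route to the domain version of the inequality---a scaling argument to identify $\theta$ in \eqref{theta}, the purely multiplicative inequality on $\R^n$, then an extension operator and cutoff to return to $\Omega$---and that route does work, but two steps of your sketch conceal essentially all of the difficulty. First, the whole-space multiplicative inequality for general admissible $(j,m,p,q,r)$ is the heart of the theorem: the Sobolev base case together with the one-dimensional estimate $\|f'\|_{\mathrm{L}^2}^2\le\|f\|_{\mathrm{L}^\infty}\|f''\|_{\mathrm{L}^1}$ and an unspecified ``bootstrap'' is not yet a proof, since the induction must be run jointly in $j$ and $m$ with a case analysis on the exponents (this is where Nirenberg's original paper spends its effort, and where the constraint $j/m\le\theta\le1$ is actually used). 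Second, the extension step as you describe it does not directly produce \eqref{gn1}: applying the whole-space inequality to a cutoff of $E\mathbf{u}$ and invoking $\|E\mathbf{u}\|_{\mathbb{W}^{k,p}(\R^n)}\le C\|\mathbf{u}\|_{\mathbb{W}^{k,p}(\Omega)}$ bounds the right-hand side by $\|\mathbf{u}\|_{\mathbb{W}^{m,p}(\Omega)}^{\theta}\|\mathbf{u}\|_{\mathbb{L}^q(\Omega)}^{1-\theta}$, i.e., it yields the form \eqref{gn2} with the full Sobolev norm rather than $\|\nabla^m\mathbf{u}\|_{\mathbb{L}^p}^{\theta}$. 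To pass from that to \eqref{gn1}, with the lower-order term weakened all the way to $\|\mathbf{u}\|_{\mathbb{L}^s}$ for arbitrary $s>0$, you still need the equivalence $\|\mathbf{u}\|_{\mathbb{W}^{m,p}}\le C(\|\nabla^m\mathbf{u}\|_{\mathbb{L}^p}+\|\mathbf{u}\|_{\mathbb{L}^p})$ on Lipschitz domains plus an Ehrling-type absorption inequality such as $\|\mathbf{u}\|_{\mathbb{L}^p(\Omega)}\le\varepsilon\|\mathbf{u}\|_{\mathbb{W}^{m,p}(\Omega)}+C_\varepsilon\|\mathbf{u}\|_{\mathbb{L}^s(\Omega)}$, neither of which appears in your sketch. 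A small further correction: your counterexample to the multiplicative form (nonzero constants) only works when $j=0$ and $\theta>0$; for $j\ge1$ constants are annihilated by $\nabla^j$ as well, and one should instead take a polynomial of degree $j$, for which $\nabla^j\mathbf{u}\neq\mathbf{0}$ but $\nabla^m\mathbf{u}=\mathbf{0}$.
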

	Note that  for $\u\in\mathbb{W}_0^{1,p}(\Omega;\R^n)$, Lemma \ref{gn} is a special case of the above inequality, since for $j=0$, $m=1$ and $\frac{1}{s}=\frac{\theta}{p}+\frac{1-\theta}{q}$ in \eqref{gn1}, and application of the Poincar\'e inequality yields (\ref{gn0}). It should also be noted that \eqref{gn1} can also be written as 
	\begin{align}\label{gn2}
	\|\nabla^j\u\|_{\mathbb{L}^r}\leq C\|\u\|_{\mathbb{W}^{m,p}}^{\theta}\|\u\|_{\mathbb{L}^q}^{1-\theta}.
	\end{align}
	By taking $j=1$, $r=4$, $n=m=p=q=s=2$ in (\ref{theta}), we get $\theta=\frac{3}{4},$ and 
	\begin{align}\label{gu}
	\|\nabla\u\|_{\mathbb{L}^4}\leq C\left(\|\Delta\u\|^{3/4}\|\u\|^{1/4}+\|\u\|\right).
	\end{align} 
	Using Young's inequality, we obtain 
	\begin{align}\label{gu1}
	\|\nabla\u\|_{\mathbb{L}^4}^2\leq  C(\|\Delta\u\|^{3/2}\|\u\|^{1/2}+\|\u\|^2)\leq C(\|\Delta\u\|^{2}+\|\u\|^2)\leq C\|\u\|_{\mathbb{H}^2}^2.
	\end{align}
	\begin{lemma}[Agmon's inequality, Lemma 13.2, \cite{SA}]
		For any 
		$\u\in \H^{s_2}(\Omega;\R^n),$ choose  $s_{1}$ and $s_{2}$ such that 
		$ s_1< \frac{n}{2} < s_2$. Then, if 
		$0< \alpha < 1$ and 
		$\frac{n}{2} = \alpha s_1 + (1-\alpha)s_2$, the following inequality holds 
		$$ \|\u\|_{\mathbb{L}^\infty}\leq C \|\u\|_{\H^{s_1}}^{\alpha} \|\u\|_{\H^{s_2}}^{1-\alpha}.$$
	\end{lemma}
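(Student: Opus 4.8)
The plan is to reduce the estimate to the whole space $\R^n$ and then run a Fourier--space interpolation. First I would extend $\u$: since $\Omega$ is a bounded Lipschitz domain and (in the range relevant here) $0\le s_1<s_2$, I invoke a Stein-type total extension operator $E$ that is simultaneously bounded on $\H^{s_1}$ and $\H^{s_2}$, so that $E\u\in\H^{s_2}(\R^n;\R^n)$ with $\|E\u\|_{\H^{s}(\R^n)}\le C\|\u\|_{\H^{s}(\Omega)}$ for $s\in\{s_1,s_2\}$. Because restriction cannot increase the $\mathbb{L}^\infty$ norm, it suffices to prove the inequality on $\R^n$; I keep the notation $\u$ for $E\u$. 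Then, by Fourier inversion applied componentwise, $\|\u\|_{\mathbb{L}^\infty}\le (2\pi)^{-n/2}\int_{\R^n}|\widehat{\u}(\xi)|\d\xi$, so the whole matter reduces to bounding $\int_{\R^n}|\widehat{\u}(\xi)|\d\xi$ in terms of the two weighted quantities $\|\u\|_{\H^{s_i}}^2=\int_{\R^n}(1+|\xi|^2)^{s_i}|\widehat{\u}(\xi)|^2\d\xi$.

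The crux is to split the frequency domain at a scale $R>0$ and apply the Cauchy--Schwarz inequality on each piece with the matching weight $(1+|\xi|^2)^{\mp s_i}$ --- weight $s_1$ on $\{|\xi|\le R\}$ and weight $s_2$ on $\{|\xi|>R\}$. Here the hypotheses $s_1<n/2<s_2$ enter decisively: in polar coordinates $\int_{|\xi|\le R}(1+|\xi|^2)^{-s_1}\d\xi$ grows like $R^{\,n-2s_1}$, the exponent $n-2s_1>0$ keeping the power integrable up to $R$, while $\int_{|\xi|>R}(1+|\xi|^2)^{-s_2}\d\xi$ converges and decays like $R^{\,n-2s_2}$ since $n-2s_2<0$. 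This produces
$$\|\u\|_{\mathbb{L}^\infty}\le C\big(R^{(n-2s_1)/2}\|\u\|_{\H^{s_1}}+R^{(n-2s_2)/2}\|\u\|_{\H^{s_2}}\big).$$

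Next I would optimize over $R$. Balancing the two terms gives $R=\big(\|\u\|_{\H^{s_2}}/\|\u\|_{\H^{s_1}}\big)^{1/(s_2-s_1)}$, and since $s_1\le s_2$ forces $\|\u\|_{\H^{s_1}}\le\|\u\|_{\H^{s_2}}$ one has $R\ge 1$, so the large-$R$ asymptotics above are indeed the dominant contribution and no small-scale correction is needed. Substituting back, a direct computation yields the exponent $\alpha=(2s_2-n)/\big(2(s_2-s_1)\big)$ on $\|\u\|_{\H^{s_1}}$ and $1-\alpha=(n-2s_1)/\big(2(s_2-s_1)\big)$ on $\|\u\|_{\H^{s_2}}$; one checks that $\alpha s_1+(1-\alpha)s_2=n/2$, which is exactly the stated constraint, so the proof concludes.

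I do not expect a deep obstacle, as this is a classical interpolation estimate; the point demanding care is its \emph{borderline} character. A single weight at the critical exponent $n/2$ makes $\int(1+|\xi|^2)^{-n/2}\d\xi$ diverge, which is precisely why a naive Cauchy--Schwarz against one Sobolev norm fails and why one is forced to retain \emph{both} scales $s_1$ and $s_2$ and optimize across them; this is also why the strict inequalities $s_1<n/2<s_2$ are sharp. The only genuinely domain-dependent ingredient is the existence of an extension operator bounded simultaneously on the two Sobolev scales, which is standard on Lipschitz domains in the range $s_1\ge 0$ relevant to the applications (e.g. $n=2$, $s_1=0$, $s_2=2$, giving $\|\u\|_{\mathbb{L}^\infty}\le C\|\u\|^{1/2}\|\u\|_{\H^2}^{1/2}$).
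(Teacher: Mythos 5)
Your proposal is essentially correct, but there is nothing in the paper to compare it against: the authors do not prove this lemma at all, they quote it verbatim from Agmon's book (Lemma 13.2 of \cite{SA}) and use it as a black box, their only supporting remark being that the one case they actually need ($n=2$, $s_1=0$, $s_2=2$, giving \eqref{agm}) also follows from the Gagliardo--Nirenberg inequality of Lemma \ref{GNI} with $j=0$, $r=\infty$, $m=p=q=2$, $\theta=\tfrac12$. What you give is the classical self-contained Fourier-analytic proof: extend to $\R^n$ by a Stein-type operator bounded on both scales, dominate $\|\u\|_{\mathbb{L}^\infty}$ by the $\mathrm{L}^1$ norm of the Fourier transform, split frequencies at a scale $R$, apply Cauchy--Schwarz with weight $(1+|\xi|^2)^{s_1}$ on $\{|\xi|\le R\}$ and $(1+|\xi|^2)^{s_2}$ on $\{|\xi|>R\}$ (this is exactly where $s_1<\tfrac{n}{2}<s_2$ enters, since $n-2s_1>0$ and $n-2s_2<0$), and optimize over $R$. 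Your exponent bookkeeping is right: $\alpha=(2s_2-n)/(2(s_2-s_1))$ satisfies $\alpha s_1+(1-\alpha)s_2=n/2$ and lies in $(0,1)$ precisely under the stated strict inequalities, and your observation that the balancing choice of $R$ is $\ge 1$ (because $\|\u\|_{\H^{s_1}}\le\|\u\|_{\H^{s_2}}$) correctly legitimizes using the large-$R$ asymptotics of the two weight integrals. Two caveats are worth recording. First, your extension step assumes $s_1\ge 0$, as you acknowledge; the lemma as stated permits $s_1<0$, where extension bounded in $\H^{s_1}(\Omega)$ is a more delicate matter, although the whole-space inequality holds for all $s_1<\tfrac{n}{2}<s_2$ and the paper only ever invokes the case $s_1=0$. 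Second, the constant your argument produces depends on the extension operator and hence on $\Omega$, which is consistent with the statement but should be said explicitly. What your route buys over the paper's citation is a transparent, quantitative proof valid for arbitrary (not necessarily integer) $s_1,s_2$; what the paper's route buys is brevity and, via Gagliardo--Nirenberg, a derivation of the special case that avoids extension and Fourier analysis on $\R^n$ altogether.
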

	For $\u\in\H^2(\Omega)\cap\H_0^1(\Omega)$, the \emph{Agmon's inequality} in 2D states that there exists a constant 
	$C>0$ such that
	\begin{align}\label{agm}
	\|\u\|_{\mathbb{L}^{\infty}}\leq C\|\u\|^{1/2}\|\u\|_{\H^2}^{1/2}\leq C\|\u\|_{\H^2}.
	\end{align}
	The inequality (\ref{agm}) can also be obtained from (\ref{gn1}), by taking $j=0$, $r=\infty$, $m=2=p=q=2$, so that we have $\theta=\frac{1}{2}$.

	\subsection{Weak and strong solution of the system \eqref{nonlin phi}-\eqref{initial conditions}}
	Now we state the results regarding the existence theorem and uniqueness of  weak and strong solution for the uncontrolled nonlocal Cahn-Hilliard-Navier-Stokes system  given by \eqref{nonlin phi}-\eqref{initial conditions} with $\U=0$.  
	Let us first make the following assumptions:
	\begin{assumption}\label{prop of F and J} Let
		$\mathrm{J}$ and $\mathrm{F}$ satisfy:
		\begin{enumerate}
			\item [(1)] $ \J \in \W^{1,1}(\mathbb{R}^2;\R), \  \J(x)= \J(-x) \; \text {and} \ a(x) = \int\limits_\Omega \J(x-y)\d y \geq 0,$ a.e., in $\Omega$.
			\item [(2)] $\F \in \C^{2}(\mathbb{R})$ and there exists $C_0 >0$ such that $\F''(s)+ a(x) \geq C_0$, for all $s \in \mathbb{R}$, a.e., $x \in \Omega$.
			\item [(3)] Moreover, there exist $C_1 >0$, $C_2 > 0$ and $q>0$ such that $\F''(s)+ a(x) \geq C_1|s|^{2q} - C_2$, for all $s \in \mathbb{R}$, a.e., $x \in \Omega$.
			\item [(4)] There exist $C_3 >0$, $C_4 \geq 0$ and $r \in (1,2]$ such that $|\F'(s)|^r \leq C_3|\F(s)| + C_4,$ for all $s \in \mathbb{R}$.
		\end{enumerate}
	\end{assumption}
	\begin{remark}\label{remark J}
		Assumption $\J \in \W^{1,1}(\mathbb{R}^2;\R)$ can be weakened. Indeed, it can be replaced by $\J \in \W^{1,1}(\B_\delta;\R)$, where $\B_\delta := \{z \in \mathbb{R}^2 : |z| < \delta \}$ with $\delta := \text{diam}(\Omega)$, or also by
		\begin{eqnarray} \label{Estimate J}
		\sup_{x\in \Omega} \int_\Omega \left( |\J(x-y)| + |\nabla \J(x-y)| \right) dy < +\infty .
		\end{eqnarray}
	\end{remark}
	
	\begin{remark}\label{remark F}
		Since $\F(\cdot)$ is bounded from below, it is easy to see that Assumption \ref{prop of F and J} (4) implies that $\F(\cdot)$ has a polynomial growth of order $r'$, where $r' \in [2,\infty)$ is the conjugate index to $r$. Namely, there exist $C_5$ and $C_6 \geq 0$ such that 
		\begin{eqnarray}\label{2.9}
		|\F(s)| \leq C_5|s|^{r'} + C_6, \text{ for all } s \in \R.
		\end{eqnarray}
		Observe that Assumption \ref{prop of F and J} (4) is fulfilled by a potential of arbitrary polynomial growth. For example, (2)-(4) are satisfied for the case of the well-known double-well potential $\F = (s^2 - 1)^2$.
	\end{remark}
	Now, we examine the global solvability results for uncontrolled system  \eqref{nonlin phi}-\eqref{initial conditions} i.e. with $\U=0$ under the Assumption \ref{prop of F and J}. The existence of a weak solution for such a system in two and three dimensional bounded domains is established in Theorem 1, Corollaries 1 and 2, \cite{weak}, and the uniqueness for two dimensional case  is  obtained in Theorem 2, \cite{unique}. Under some extra assumptions on $\F$ and $\J$, and enough regularity on the initial data and external forcing,  a unique global strong solution is established in Theorem 2, \cite{strong}.
	\begin{definition}[weak solution]
		Let $\u_0\in\G_{\text{div}}$, $\varphi_0\in\H$ with $\F(\varphi_0)\in\mathrm{L}^1(\Omega)$ and $0<T<\infty$ be given. Then $(\u,\varphi)$ is a \emph{weak solution} to the uncontrolled system ($\U=0$) \eqref{nonlin phi}-\eqref{initial conditions} on $[0,T]$ corresponding to initial conditions $\u_0$ and $\varphi_0$ if 
		\begin{itemize}
			\item [(i)] $\u,\varphi$ and $\mu$ satisfy 
			\begin{equation}\label{sol}
			\left\{
			\begin{aligned}
			&	\u \in \mathrm{L}^{\infty}(0,T;\G_{\text{div}}) \cap \mathrm{L}^2(0,T;\V_{\text{div}}),  \\ 
			&	\u_t \in \mathrm{L}^{2-\gamma}(0,T;\V_{\text{div}}'), \ \text{ for all  }\gamma \in (0,1),  \\
			&	\varphi \in \mathrm{L}^{\infty}(0,T;\mathrm{H}) \cap \mathrm{L}^2(0,T;\mathrm{V}),   \\
			&	\varphi_t \in \mathrm{L}^{2-\delta}(0,T;\mathrm{V}'),\ \text{ for all  }\delta \in (0,1), \\
			&	\mu \in \mathrm{L}^2(0,T;\mathrm{V}),
			\end{aligned}
			\right.
			\end{equation}
			\item [(ii)]  for every $\psi\in\mathrm{V}$, every $\v \in \V_{\text{div}}$, if we define 		             $\rho$ by 
			\begin{align}
			\rho(x,\varphi):=a(x)\varphi+\F'(\varphi),
			\end{align} and for almost any $t\in(0,T)$, we have
			\begin{align}
			\langle \varphi_t,\psi\rangle +(\nabla\rho,\nabla\psi)&=\int_{\Omega}(\u\cdot\nabla\psi)\varphi\d x+\int_{\Omega}(\nabla\J*\varphi)\cdot\nabla\psi\d x,\\
			\langle \u_t,\v\rangle +\nu(\nabla\u,\nabla\v)+b(\u,\v,\w)&=-\int_{\Omega}(\v\cdot\nabla\mu)\varphi\ d x+\langle\h,\v\rangle.
			\end{align}
			\item [(iii)] Moreover, the following initial conditions hold in the weak sense 
			\begin{align}
			\u(0)=\u_0,\ \varphi(0)=\varphi_{0},
			\end{align}
			i.e., for every $\v \in \V_{\text{div}}$, we have $(\u(t),\v) \to (\u_0,\v)$ as $t\to 0$, and for every $\chi \in \mathrm{V}$, we have $(\varphi(t),\chi) \to (\varphi_0,\chi)$ as $t\to  0$.
		\end{itemize}
	\end{definition}
	
	\begin{theorem}[Existence, Theorem 1, Corollaries 1 and 2, \cite{weak}]\label{exist}
		Let the Assumption \ref{prop of F and J} be satisfied. Let $\u_0 \in \G_{\text{div}}$, $\varphi_0 \in \mathrm{H}$ be such that $\F(\varphi_0) \in \mathrm{L}^1(\Omega)$ and $\mathbf{h} \in \mathrm{L}^2_{\text{loc}}([0,\infty), \V_{\text{div}}')$ are given. Then, for every given $T>0$, there exists a weak solution $(u,\varphi)$ to the uncontrolled system  \eqref{nonlin phi}-\eqref{initial conditions} such that \eqref{sol} is satisfied.
		Furthermore, setting
		\begin{align}\mathscr{E}(\u(t),\varphi(t)) = \frac{1}{2} \|\u(t)\|^2 + \frac{1}{4} \int_\Omega \int_\Omega \J(x-y) (\varphi(x,t) - \varphi(y,t))^2 \d x \d y + \int_\Omega \F(\varphi(x,t))\d x,\end{align}
		the following energy estimate holds for almost any $t>0$:
		\begin{align}\label{energy}
		\mathscr{E}(\u(t),\varphi(t)) + \int_0^t \left(\nu \| \nabla \u(s)\|^2 + \| \nabla\mu(s) \|^2 \right)\d s \leq \mathscr{E}(\u_0,\varphi_0) + \int_0^t \langle \mathbf{h}(s), \u(s) \rangle\d s,\end{align}
		and the weak solution $(\u,\varphi)$ satisfies the following energy identity,
		$$\frac{\d}{\d t}\mathscr{E}(\u(t),\varphi(t)) + \nu \|\nabla \u(t) \|^2+ \| \nabla \mu(t) \|^2 = \langle \mathbf{h}(t) , \u(t) \rangle.$$
	\end{theorem}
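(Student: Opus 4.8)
The plan is to construct a solution by a Faedo--Galerkin scheme, to extract uniform a priori bounds from the natural energy balance, and to pass to the limit by Aubin--Lions compactness; the energy inequality and identity then drop out of the limiting procedure together with the algebraic structure of the chemical potential. First I would set up the discretisation. For the velocity I project onto $\mathbb{G}_n:=\mathrm{span}\{\me_1,\dots,\me_n\}$, the span of the Stokes eigenfunctions introduced above, and for the concentration onto the span of the first $n$ eigenfunctions of the Neumann operator $\mathcal{B}$. Writing $\u_n=\sum_{j=1}^n g_j(t)\me_j$ together with an analogous expansion of $\varphi_n$, and setting $\mu_n=a\varphi_n-\J\ast\varphi_n+\F'(\varphi_n)$, I require the Galerkin projections of \eqref{nonlin phi} and \eqref{nonlin u} (with $\U=0$) to hold. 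Since $\F'$ is $\mathrm{C}^1$ with polynomial growth, the resulting finite system of ordinary differential equations has a locally Lipschitz right-hand side, so Carath\'eodory--Picard theory yields a unique local solution, and the bounds below extend it to all of $[0,T]$.

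The heart of the argument is the energy balance. Testing the discrete velocity equation with $\u_n$ (the convective contribution vanishing by \eqref{best}) and the discrete concentration equation with $\mu_n$, the coupling contributions cancel upon addition: indeed $(\u_n\cdot\nabla\varphi_n,\mu_n)=(\mu_n\nabla\varphi_n,\u_n)$, so the transport term produced by testing \eqref{nonlin phi} against $\mu_n$ exactly matches the Korteweg force $(\mu_n\nabla\varphi_n,\u_n)$ produced by testing \eqref{nonlin u} against $\u_n$. Recognising that $\langle\varphi_{n,t},\F'(\varphi_n)\rangle=\frac{\d}{\d t}\int_\Omega\F(\varphi_n)\,\d x$, while the remaining part $\langle\varphi_{n,t},a\varphi_n-\J\ast\varphi_n\rangle$ is the time derivative of $\frac14\int_\Omega\int_\Omega\J(x-y)(\varphi_n(x)-\varphi_n(y))^2\,\d x\,\d y$ (using the symmetry of $\J$), I obtain exactly
$$\frac{\d}{\d t}\mathscr{E}(\u_n,\varphi_n)+\nu\|\nabla\u_n\|^2+\|\nabla\mu_n\|^2=\langle\h,\u_n\rangle.$$
Integrating in time, bounding $\langle\h,\u_n\rangle$ by Young's inequality and the Poincar\'e inequality, using Assumption \ref{prop of F and J}(2)--(3) to bound $\F$ from below, and invoking the data hypotheses $\u_0\in\G_{\text{div}}$, $\F(\varphi_0)\in\mathrm{L}^1(\Omega)$, a Gronwall argument yields uniform bounds for $\u_n$ in $\mathrm{L}^\infty(0,T;\G_{\text{div}})\cap\mathrm{L}^2(0,T;\V_{\text{div}})$ and for $\|\nabla\mu_n\|$ in $\mathrm{L}^2(0,T)$. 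Controlling the mean $\overline{\mu_n}$ through Assumption \ref{prop of F and J}(4) (which makes $\F'(\varphi_n)$ integrable once $\int_\Omega\F(\varphi_n)$ is bounded) upgrades this to a bound for $\mu_n$ in $\mathrm{L}^2(0,T;\mathrm{V})$ via the Poincar\'e--Wirtinger inequality.

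I would then convert the $\mu_n$-bound into a bound on $\varphi_n$, which is where the nonlocal structure is essential: from $\nabla\mu_n=(a+\F''(\varphi_n))\nabla\varphi_n+(\nabla a)\varphi_n-(\nabla\J)\ast\varphi_n$ and $a+\F''\geq C_0>0$ by Assumption \ref{prop of F and J}(2), one solves algebraically for $\nabla\varphi_n$, and since $\J\in\W^{1,1}$ this gives uniform bounds for $\varphi_n$ in $\mathrm{L}^2(0,T;\mathrm{V})\cap\mathrm{L}^\infty(0,T;\mathrm{H})$. Estimating the nonlinear terms then produces the fractional-in-time bounds $\u_{n,t}\in\mathrm{L}^{2-\gamma}(0,T;\V_{\text{div}}')$ and $\varphi_{n,t}\in\mathrm{L}^{2-\delta}(0,T;\mathrm{V}')$, the exponents falling below $2$ precisely because in two dimensions the Korteweg term $\mu_n\nabla\varphi_n$ and the convective term are only marginally integrable in time. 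With these bounds the Aubin--Lions--Simon lemma supplies strong limits $\u_n\to\u$ in $\mathrm{L}^2(0,T;\G_{\text{div}})$ and $\varphi_n\to\varphi$ in $\mathrm{L}^2(0,T;\mathrm{H})$, hence a.e.\ convergence along a subsequence, alongside the weak-$\ast$ limits from the uniform estimates.

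The main obstacle is the passage to the limit in the nonlinear terms, and in particular in the coupling term and the potential. Writing the Korteweg force as $-(\varphi_n\nabla\mu_n,\v)$ pairs a strongly convergent factor against only a weakly convergent one, so I must localise on the finite-dimensional test spaces and then argue by density; the convective term $b(\u_n,\u_n,\cdot)$ is handled by the usual strong-convergence argument, and $\F'(\varphi_n)\to\F'(\varphi)$ together with $a\varphi_n-\J\ast\varphi_n\to a\varphi-\J\ast\varphi$ in $\mathrm{L}^2$ follows from the a.e.\ convergence of $\varphi_n$ and a Vitali/uniform-integrability argument controlled by the growth bound \eqref{2.9}. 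Finally, the weak continuity in time of $\u$ and $\varphi$ (from the $\mathrm{L}^\infty$ and time-derivative bounds) recovers the initial data in the stated weak sense; the energy estimate \eqref{energy} follows by weak lower semicontinuity of the norms and convexity of the interaction energy in the limit; and the energy identity is obtained by observing that the limit solution is regular enough that $\langle\u_t,\u\rangle$ and $\langle\varphi_t,\mu\rangle$ are admissible pairings, so one may reproduce the discrete balance exactly in the limit.
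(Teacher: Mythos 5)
This theorem is not proved in the paper at all: it is imported verbatim, with citation, from Theorem 1 and Corollaries 1 and 2 of \cite{weak}, so the only proof to compare against is the one in that reference. Your sketch follows essentially the same route as that source: Faedo--Galerkin discretisation on Stokes and Neumann eigenbases, the energy balance obtained by testing with $(\u_n,\mu_n)$ so that the transport term $(\u_n\cdot\nabla\varphi_n,\mu_n)$ cancels the Korteweg force and $\langle\partial_t\varphi_n,\mu_n\rangle$ becomes the time derivative of the nonlocal free energy, recovery of the $\mathrm{V}$-bound on $\varphi_n$ from $a+\F''\geq C_0$, control of the mean of $\mu_n$ via Assumption \ref{prop of F and J} (4), Aubin--Lions compactness, and limit passage with lower semicontinuity giving \eqref{energy}.

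Two points are glossed over that a complete proof must handle. First, the uniform bound on $\int_\Omega\F(\varphi_n(0))\d x$: the $\mathrm{L}^2$-projection of $\varphi_0$ onto the Galerkin space need not inherit the hypothesis $\F(\varphi_0)\in\mathrm{L}^1(\Omega)$ when $\F$ grows faster than quadratically, so the approximate initial data must be prepared separately (e.g.\ by truncating $\varphi_0$, which decreases $\F$ up to a constant for admissible potentials). Second, the energy \emph{identity}: your closing sentence asserts that the limit solution is ``regular enough'' for $\langle\u_t,\u\rangle$ and $\langle\varphi_t,\mu\rangle$ to be admissible pairings, but that is precisely what has to be proved, and it is the genuinely two-dimensional content of the statement (in 3D only the inequality survives). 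A priori the Korteweg force only lands in $\mathrm{L}^{2-\gamma}(0,T;\V_{\text{div}}')$, so one needs the improved integrability of $\varphi$ furnished by Assumption \ref{prop of F and J} (3) to upgrade the dual integrability of the time derivatives before the pairings can be justified; note also that in 2D it is the Korteweg term, not the convective term, that is the marginal one. Both are repairable technical gaps rather than wrong turns: the architecture of your argument coincides with that of \cite{weak}.
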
 
	\begin{remark}
		It can also be proved that $\u\in\C([0,T];\G_{\text{div}})$ and $\varphi\in \C([0,T];\mathrm{H})$., for more details refer Remark 7 of \cite{weak}.
	\end{remark}
	
	\begin{remark}\label{rem2.5}
		We denote by $\mathbb{Q}$ a continuous monotone increasing function with respect to each of its arguments. As a consequence of energy inequality (\ref{energy}), we have the following bound:
		\begin{align}
		&	\|\u\|_{\mathrm{L}^{\infty}(0,T;\G_{\text{div}}) \cap \mathrm{L}^2(0,T;\V_{\text{div}})} + \|\varphi \|_{\mathrm{L}^{\infty}(0,T;\mathrm{H}) \cap \mathrm{L}^2(0,T;\mathrm{V})} + \|\F(\varphi)\|_{\mathrm{L}^{\infty}(0,T;\mathrm{H})} \nonumber\\&\qquad\leq \mathbb{Q}\left(\mathscr{E}(\u_0,\varphi_0),\|\mathbf{h}\|_{\mathrm{L}^2(0,T;\V_{\text{div}}')}\right),
		\end{align}
		where $\mathbb{Q}$ also depends on $\F$, $\J$, $\nu$ and $\Omega$.
	\end{remark}
	
	\begin{theorem}[Uniqueness, Theorem 2, \cite{unique}]\label{unique}
		Suppose that the Assumption \ref{prop of F and J} is satisfied. Let $\u_0 \in \G_{\text{div}}$, $\varphi_0 \in \mathrm{H}$ with $\F(\varphi_0) \in \mathrm{L}^1(\Omega)$ and $\mathbf{h} \in \mathrm{L}^2_{\text{loc}}([0,\infty);\V_{\text{div}}')$ be given. Then, the weak solution $(\u,\varphi)$ corresponding to $(\u_0,\varphi_0)$ and given by Theorem \ref{exist} is unique. 
		
		Furthermore, for $i=1,2$, let $\z_i :=  (\u_i,\varphi_i)$ be two weak solutions corresponding to two initial data $\z_{0i} :=  (\u_{0i},\varphi_{0i})$ and external forces $\h_i$, with $\u_{0i} \in \G_{\text{div}}$, $\varphi_{0i} \in \mathrm{H}$ with $\F(\varphi_{0i}) \in \mathrm{L}^1(\Omega)$ and $\h_i \in \mathrm{L}^2_{\text{loc}}([0,\infty);\V_{\text{div}}')$. Then the following continuous dependence estimate holds:
		\begin{align*}
		&\| \u_2(t) - \u_1(t) \|^2 + \| \varphi_2(t) - \varphi_1(t) \|^2_{\mathrm{V}'} + \int_0^t \left( \frac{C_0}{2} \| \varphi_2(s) - \varphi_1(s) \|^2 + \frac{\nu}{4} \|\nabla( \u_2(s) - \u_1(s)) \|^2 \right) \d s \\
		&\leq \left( \|\u_2(0) - \u_1(0) \|^2 + \| \varphi_2(0) - \varphi_1(0) \|^2_{\mathrm{V}'} \right) \Lambda_0(t) \\
		&\quad + \| \overline{\varphi}_2(0) - \overline{\varphi_1}(0)\| \mathbb{Q} \left( \mathcal{E}(z_{01}),\mathcal{E}(z_{02}),\| \h_1 \|_{\mathrm{L}^2(0,t;\V_{div}')},\| \h_2 \|_{\mathrm{L}^2(0,t;\V_{div}')} \right)  \Lambda_1(t) \\&\quad+ \| \h_2 - \h_1 \|^2_{\mathrm{L}^2(0,T;\V_{\text{div}}')} \Lambda_2(t) ,
		\end{align*} 
		for all $t \in [0,T]$, where $\Lambda_0(t)$, $\Lambda_1(t)$ and $\Lambda_2(t)$ are continuous functions which depend on the norms of the two solutions. The functions $\mathbb{Q}$ and $\Lambda_i(t)$ also depend on $\F$, $\J$ and $\Omega$.
	\end{theorem}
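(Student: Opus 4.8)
The plan is to derive both the uniqueness and the stability estimate from a single differential inequality for the difference of two solutions, so that uniqueness follows as the special case of coinciding data and forcing. Let $\z_i=(\u_i,\varphi_i)$, $i=1,2$, be two weak solutions, with associated chemical potentials $\mu_i=a\varphi_i-\J\ast\varphi_i+\F'(\varphi_i)$, and write $\u:=\u_2-\u_1$, $\varphi:=\varphi_2-\varphi_1$, $\mu:=\mu_2-\mu_1$ and $\h:=\h_2-\h_1$. First I would note that testing the $\varphi$-equation with $\psi\equiv1$ gives $\frac{\d}{\d t}\,\overline{\varphi_i}=0$, so each spatial mean is conserved and $\overline{\varphi}(t)\equiv\overline{\varphi_2}(0)-\overline{\varphi_1}(0)$ is constant; this constant is exactly the origin of the $\|\overline{\varphi}_2(0)-\overline{\varphi_1}(0)\|$ term in the statement. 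Since $\varphi-\overline{\varphi}$ has zero average it belongs to $\mathrm{V}_0'$, and therefore $\mathcal{B}^{-1}(\varphi-\overline{\varphi})\in\mathrm{V}_0$ is an admissible test function for the concentration equation.

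Subtracting the weak formulations, I would test the velocity difference equation with $\v=\u$ and the concentration difference equation with $\psi=\mathcal{B}^{-1}(\varphi-\overline{\varphi})$. For the velocity, the antisymmetry \eqref{best} together with $\u_2=\u_1+\u$ collapses the convective contribution to $b(\u,\u_1,\u)$, which \autoref{lady} and Young's inequality bound by $\tfrac{\nu}{4}\|\nabla\u\|^2+C\|\nabla\u_1\|^2\|\u\|^2$. For the concentration, the identities \eqref{bes}--\eqref{bes1} convert the time derivative into $\tfrac12\frac{\d}{\d t}\|\varphi-\overline{\varphi}\|_{\mathrm{V}'}^2$ and the potential term into $(\rho(\cdot,\varphi_2)-\rho(\cdot,\varphi_1),\varphi-\overline{\varphi})$; by the mean value theorem and the coercivity $\F''+a\geq C_0$ of \autoref{prop of F and J}(2), the leading part satisfies $(\rho(\cdot,\varphi_2)-\rho(\cdot,\varphi_1),\varphi)\geq C_0\|\varphi\|^2$, while the residual mean-value term $-\overline{\varphi}\,(\rho(\cdot,\varphi_2)-\rho(\cdot,\varphi_1),1)$ is handled with the a priori bounds of \autoref{rem2.5} and Young's inequality, producing the $\Lambda_1$ contribution.

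The two delicate terms are the difference of the Korteweg forces and the convection of the concentration. I would use the relation $\mu_i=\rho(\cdot,\varphi_i)-\J\ast\varphi_i$ and rearrange $\mu_2\nabla\varphi_2-\mu_1\nabla\varphi_1$ and the convective term so that every resulting factor is either one of the differences $\u,\varphi$ or a quantity bounded a priori through \autoref{rem2.5}; the nonlocal pieces involving $\nabla\J\ast\varphi$ are controlled by Young's convolution inequality, which is available because $\J\in\W^{1,1}$ by \autoref{prop of F and J}(1) (see also \autoref{remark J}). Interpolating the remaining $\mathbb{L}^4$-norms via \autoref{lady} and \autoref{gn}, I would dominate each such term by $\tfrac{\nu}{4}\|\nabla\u\|^2+\tfrac{C_0}{2}\|\varphi\|^2$ plus a coefficient times $\|\u\|^2+\|\varphi-\overline{\varphi}\|_{\mathrm{V}'}^2$, where the coefficient is integrable in time thanks to the solution regularity \eqref{sol}. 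This is the main obstacle: because the difference $\varphi$ is only controlled in $\mathrm{H}$ (time-integrated) and in $\mathrm{V}'$ (pointwise), never in $\mathrm{V}$, one cannot afford any leftover $\nabla\varphi$ factor, and the estimates must be arranged using the $\W^{1,1}$-smoothing of $\J$ and the coercivity of $\F''+a$ alone.

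Finally, setting $y(t):=\|\u(t)\|^2+\|\varphi(t)-\overline{\varphi}(t)\|_{\mathrm{V}'}^2$ and adding the two estimates, the dissipative terms $\tfrac{\nu}{2}\|\nabla\u\|^2$ and $\tfrac{C_0}{2}\|\varphi\|^2$ survive on the left, and I obtain
\[
y'(t)+\frac{\nu}{2}\|\nabla\u\|^2+\frac{C_0}{2}\|\varphi\|^2\le g(t)\,y(t)+R(t),
\]
where $g\in\mathrm{L}^1(0,T)$ depends on the norms of the two solutions and $R$ collects the mean-value and forcing contributions, controlled by $\|\overline{\varphi}_2(0)-\overline{\varphi_1}(0)\|$ and $\|\h_2-\h_1\|_{\mathrm{L}^2(0,T;\V_{\text{div}}')}^2$. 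An application of Gronwall's lemma, after integration in time, yields precisely the stated continuous dependence estimate, with $\Lambda_0,\Lambda_1,\Lambda_2$ generated by $\exp\!\big(\int_0^t g\big)$ and the accompanying coefficients. Taking $\z_{01}=\z_{02}$ and $\h_1=\h_2$ makes $\overline{\varphi}(0)=0$ and $R\equiv0$, hence $y\equiv0$, i.e. $\u_2=\u_1$ and $\varphi_2=\varphi_1$, which establishes uniqueness.
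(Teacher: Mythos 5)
This theorem is stated in the paper without proof --- it is quoted, with attribution, from Theorem 2 of \cite{unique} --- so the only available comparison is with that source's argument. Your proposal reconstructs essentially that same proof: testing the difference equations with $\u_2-\u_1$ and with $\mathcal{B}^{-1}$ applied to the zero-mean part of $\varphi_2-\varphi_1$, using mass conservation, the coercivity $a+\F''\geq C_0$, and the $\W^{1,1}$ convolution structure of $\J$ so that no $\nabla(\varphi_2-\varphi_1)$ factor survives, then closing a Gronwall inequality for $\|\u_2-\u_1\|^2+\|\varphi_2-\varphi_1-\overline{\varphi_2-\varphi_1}\|_{\mathrm{V}'}^2$; this matches the cited proof and is correct.
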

	
	For the further analysis of this paper, we also need the existence of a unique strong solution to the uncontrolled system  \eqref{nonlin phi}-\eqref{initial conditions}. The following theorem established in Theorem 2, \cite{strong} gives the existence and uniqueness of the strong solution   the uncontrolled system  \eqref{nonlin phi}-\eqref{initial conditions}. 
	\begin{definition}[Strong solution]
		Let $\u_0\in\V_{\text{div}}$, $\varphi_0\in\mathrm{V} \cap \mathrm{L}^{\infty}(\Omega)$, $\h,\in \mathrm{L}^2_{\text{loc}}(0,\infty;\G_{\text{div}})$ and $0<T<\infty$ be given. Then $(\u,\varphi)$ is a \emph{strong solution} to the uncontrolled system \eqref{nonlin phi}-\eqref{initial conditions} on $[0,T]$ corresponding to initial conditions $\u_0$ and $\varphi_0$ if 
	$\u,\varphi$ and $\mu$ satisfy 
			\begin{equation}\label{sol1}
			\left\{
			\begin{aligned}
			&	\u \in \mathrm{L}^{\infty}(0,T;\V_{\text{div}})\cap \mathrm{L}^2(0,T;\H^2),  \\ 
			&	\u_t \in \mathrm{L}^2(0,T;\G_{\text{div}}),  \\
			&	\varphi \in \mathrm{L}^{\infty}((0,T) \times\Omega)\cap \mathrm{L}^{\infty}(0,T;\mathrm{V}),   \\
			&	\varphi_t \in \mathrm{L}^2(0,T;\mathrm{H}), \\
			&	\mu \in \mathrm{L}^2(0,T;\mathrm{V}).
			\end{aligned}
			\right.
			\end{equation}
			\end{definition}
	\begin{theorem}[Global Strong Solution, Theorem 2, \cite{strong}]\label{strongsol}
		Let $\h,\U\in \mathrm{L}^2(0,T;\G_{\text{div}})$, $\u_0\in\V_{\text{div}}$, $\varphi_0\in\mathrm{V}\cap\mathrm{L}^{\infty}(\Omega)$ be given and the Assumption \ref{prop of F and J} be satisfied. Then, for a given $T > 0$, there exists \emph{a unique weak solution} $(\u,\varphi)$ to the system \eqref{nonlin phi}-\eqref{initial conditions} such that (\ref{sol1}) is satisfied. \\
		Furthermore, suppose in addition that $\F \in\C^3(\mathbb{R}),\ a \in\mathrm{H}^2(\Omega)$ and that $\varphi_0\in \mathrm{H}^2(\Omega)$. Then, the uncontrolled system \eqref{nonlin phi}-\eqref{initial conditions} admits \emph{a unique strong solution} on $[0, T ]$ satisfying (\ref{sol1}) and also
		\begin{equation}
		\label{0.23}\left\{
		\begin{aligned}
		&\varphi \in\mathrm{L}^{\infty}(0,T;\mathrm{W}^{1,p}),\ 2\leq p<\infty,\\ &\varphi_t\in \mathrm{L}^{\infty}(0,T;\mathrm{H})\cap\mathrm{L}^2(0,T;\mathrm{V}).
		\end{aligned}
		\right.
		\end{equation}
		If $\J \in\mathrm{W}^{2,1}(\mathbb{R}^2;\R)$, we have in addition
		\begin{equation}\label{0.24}\varphi\in \mathrm{L}^{\infty}(0, T ; \mathrm{H}^2). \end{equation}
	\end{theorem}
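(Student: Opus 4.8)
The plan is to construct the strong solution by the Faedo--Galerkin method, upgrading the weak solution of Theorem~\ref{exist} through a second layer of a priori estimates, and then to bootstrap the regularity of $\varphi$ under the additional hypotheses. First I would set up the Galerkin scheme using the eigenfunctions $\{\me_j\}$ of the Stokes operator $\A$ as a basis for the velocity and the eigenfunctions of the Neumann Laplacian $\mathcal{B}$ for the concentration, so that the projected problem becomes a system of ODEs admitting local-in-time solutions, which the a priori bounds below extend to $[0,T]$. The basic energy estimate \eqref{energy} (equivalently Remark~\ref{rem2.5}) already furnishes the weak-solution bounds; the task is to derive control in the stronger norms appearing in \eqref{sol1}.

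For the velocity I would test \eqref{nonlin u} with $\A\u$, so that the pressure drops out and
\begin{align*}
\frac{1}{2}\frac{\d}{\d t}\|\nabla\u\|^2 + \nu\|\A\u\|^2 = -b(\u,\u,\A\u) + (\mu\nabla\varphi,\A\u) + (\h+\U,\A\u).
\end{align*}
The convective term is controlled by the Ladyzhenskaya and Gagliardo--Nirenberg inequalities (Lemmas~\ref{lady} and \ref{GNI}, together with \eqref{gu1}) and Young's inequality to absorb a multiple of $\|\A\u\|^2$, while the external terms are handled by Cauchy--Schwarz and Young. The delicate term is the Korteweg coupling $(\mu\nabla\varphi,\A\u)$, which I would bound by $\|\mu\|_{\mathbb{L}^\infty}\|\nabla\varphi\|\,\|\A\u\|$, thereby reducing matters to an $\mathbb{L}^\infty$ bound on $\mu$, and hence on $\varphi$. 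In parallel, for the concentration I would differentiate the chemical-potential relation \eqref{mu},
\begin{align*}
\nabla\mu = (a+\F''(\varphi))\nabla\varphi + \varphi\nabla a - \nabla\J\ast\varphi,
\end{align*}
and exploit the strict positivity $a+\F''(\varphi)\ge C_0$ from Assumption~\ref{prop of F and J}(2) to invert this and estimate $\|\nabla\varphi\|$ by $\|\nabla\mu\|$ plus lower-order terms; this yields $\varphi\in\mathrm{L}^2(0,T;\mathrm{V})$ from $\mu\in\mathrm{L}^2(0,T;\mathrm{V})$, and, after testing \eqref{nonlin phi} and \eqref{mu} with suitable multipliers, the bounds $\varphi_t\in\mathrm{L}^2(0,T;\mathrm{H})$ and $\varphi\in\mathrm{L}^\infty(0,T;\mathrm{V})$.

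The heart of the argument---and the step I expect to be the main obstacle---is the uniform $\mathbb{L}^\infty$ bound on $\varphi$, which is peculiar to the nonlocal model precisely because $\mu$ carries no elliptic smoothing. I would obtain it from the coercivity and growth conditions Assumption~\ref{prop of F and J}(2)--(3): writing $a(x)s+\F'(s)=\mu+\J\ast\varphi$ and noting that $s\mapsto a(x)s+\F'(s)$ is strictly increasing with super-linear growth, one controls $\|\varphi\|_{\mathbb{L}^\infty}$ in terms of $\mu+\J\ast\varphi$ through a Moser-type $\mathrm{L}^p$-testing iteration, using Young's convolution inequality to bound $\J\ast\varphi$. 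Granting these Galerkin-uniform bounds, I would pass to the limit by the Aubin--Lions--Simon compactness lemma, identify the nonlinear and coupling terms in the limit, and thus produce a solution with the regularity \eqref{sol1}; its uniqueness is inherited from Theorem~\ref{unique}, since $\u_0\in\V_{\text{div}}\subset\G_{\text{div}}$ and $\varphi_0\in\mathrm{V}\cap\mathrm{L}^\infty\subset\mathrm{H}$ with $\F(\varphi_0)\in\mathrm{L}^1(\Omega)$, and a strong solution is in particular a weak one.

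Finally, under the extra hypotheses $\F\in\C^3$, $a\in\mathrm{H}^2$ and $\varphi_0\in\mathrm{H}^2$, I would bootstrap. Differentiating \eqref{nonlin phi} in time (or testing with $\varphi_t$-type multipliers) upgrades the temporal regularity to $\varphi_t\in\mathrm{L}^\infty(0,T;\mathrm{H})\cap\mathrm{L}^2(0,T;\mathrm{V})$, and estimating the identity for $\nabla\varphi$ above in $\mathbb{L}^p$, using $\nabla\mu\in\mathrm{L}^p$ and the lower bound $C_0$, gives $\varphi\in\mathrm{L}^\infty(0,T;\mathrm{W}^{1,p})$ for every $2\le p<\infty$, as in \eqref{0.23}; the two-dimensional embedding $\mathrm{W}^{1,p}\hookrightarrow\mathrm{L}^\infty$ ($p>2$) then re-confirms the spatial bound on $\varphi$. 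When in addition $\J\in\mathrm{W}^{2,1}(\R^2;\R)$, differentiating the relation for $\nabla\varphi$ once more and using $\nabla^2\J\ast\varphi\in\mathrm{L}^2$ (Young's inequality), together with $a\in\mathrm{H}^2$ and $\F\in\C^3$, produces $\varphi\in\mathrm{L}^\infty(0,T;\mathrm{H}^2)$, which is \eqref{0.24}. Throughout, the analysis is confined to two dimensions because the interpolation inequalities (Ladyzhenskaya and Agmon \eqref{agm}) that tame the nonlinear and coupling terms are only strong enough there.
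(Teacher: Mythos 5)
The first thing to say is that the paper contains \emph{no proof} of Theorem \ref{strongsol}: it is quoted, statement and all, from Theorem 2 of \cite{strong} (Frigeri--Grasselli--Krej\v{c}\'{\i}), with the control $\U$ simply folded into the total forcing $\h+\U\in\mathrm{L}^2(0,T;\G_{\text{div}})$, and it enters this paper only as a black box supplying the regularity needed in Sections \ref{se4} and \ref{se5}. So there is no internal proof to compare yours against; measured against the strategy of the cited work, your outline is a faithful reconstruction: test the projected momentum equation with $\mathrm{A}\u$; use the pointwise coercivity $a+\F''(\varphi)\geq C_0$ of Assumption \ref{prop of F and J}(2) to convert control of $\nabla\mu$ into control of $\nabla\varphi$; get the uniform bound $\varphi\in\mathrm{L}^{\infty}((0,T)\times\Omega)$ by $\mathrm{L}^p$-testing and Moser--Alikakos iteration (feasible because the divergence-free transport term vanishes against $|\varphi|^{p-2}\varphi$, so this bound is independent of the higher velocity estimates); pass to the limit by Aubin--Lions; inherit uniqueness from Theorem \ref{unique}; and bootstrap for \eqref{0.23} and \eqref{0.24}.

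Two steps in your sketch would nonetheless need repair before it is a proof. First, your key estimates use nonlinear test functions ($|\varphi|^{p-2}\varphi$ for the $\mathrm{L}^\infty$ bound, and multipliers built from $\F''(\varphi)$), which are not admissible at the Galerkin level: the finite-dimensional subspaces are not invariant under nonlinear superposition. These estimates must instead be run on the weak solution itself (which already exists and is unique by Theorems \ref{exist} and \ref{unique}), justified by regularization or truncation, or within a different approximation scheme. Second, the first part of the theorem must deliver $\varphi_t\in\mathrm{L}^2(0,T;\mathrm{H})$ and $\varphi\in\mathrm{L}^{\infty}(0,T;\mathrm{V})$ \emph{without} assuming $\F\in\C^3$, and your phrase ``testing with suitable multipliers'' hides exactly this crux: testing with $\varphi_t$ produces the term $((a+\F''(\varphi))\nabla\varphi,\nabla\varphi_t)$, whose rewriting as a time derivative costs the third derivative of $\F$ that is not yet available. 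Coercivity has to be extracted from a multiplier of the form $\partial_t\rho$ with $\rho:=a\varphi+\F'(\varphi)$ (equivalently $\mu_t$ modulo the convolution term), for which $(\varphi_t,\partial_t\rho)=\int_{\Omega}(a+\F''(\varphi))|\varphi_t|^2\d x\geq C_0\|\varphi_t\|^2$ needs only $\F\in\C^2$; the convolution contributions are then handled by Young's inequality for convolutions together with the duality bound $\|\varphi_t\|_{\mathrm{V}'}\leq\|\nabla\mu\|+\|\u\varphi\|$ read off from the equation, and $\|\nabla\varphi\|\leq C_0^{-1}\left(\|\nabla\rho\|+\|\varphi\|\|\nabla a\|_{\mathbb{L}^{\infty}}\right)$ recovers the $\mathrm{V}$-bound. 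A smaller omission: in the bootstrap you should say where $\nabla\mu\in\mathrm{L}^{\infty}(0,T;\mathrm{L}^p)$ comes from, namely that once $\varphi_t$ and $\u\cdot\nabla\varphi$ lie in $\mathrm{L}^{\infty}(0,T;\mathrm{H})$, the equation gives $\Delta\mu\in\mathrm{L}^{\infty}(0,T;\mathrm{H})$ with homogeneous Neumann data, and elliptic regularity yields $\mu\in\mathrm{L}^{\infty}(0,T;\mathrm{H}^2)\hookrightarrow\mathrm{L}^{\infty}(0,T;\mathrm{W}^{1,p})$ in two dimensions; this is what feeds the $\mathrm{W}^{1,p}$ and $\mathrm{H}^2$ conclusions for $\varphi$.
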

	
	\begin{remark}\label{rem2.12}
		The regularity properties given in (\ref{sol1})-(\ref{0.24}) imply that
		\begin{align}\label{ues}
		\u\in\C  ([0, T] ; \V_{\text{div}} )), \ \varphi \in \C ( [0, T ]; \mathrm{V} )  \cap \C_w ( [0, T ]; \mathrm{H}^ 2).
		\end{align}
			\end{remark}
	
	\section{Optimal Control Problem}\label{se4}\setcounter{equation}{0}
	In this section, we formulate a distributed optimal control problem  as the minimization of a suitable cost functional subject to the controlled nonlocal Cahn-Hilliard-Navier-Stokes equations. We consider a problem of controlling relative concentration, fluid flow and turbulence inside a flow in a bounded domain $\Omega$. We characterize the optimal control in terms of the adjoint variable. A similar kind of optimal control problem is considered in \cite{BDM} with a different cost functional and the techniques used to establish the first order necessary conditions for optimality in both papers are also entirely different. 
	\subsection{The optimal control problem formulation}
	Let $\u_d\in\mathrm{L}^2(0,T;\V_{\text{div }})$, $\varphi_d\in\mathrm{L}^2(0,T;\mathrm{H}),$ $\u_f\in\G_{\text{div }}$ and $\varphi_f\in\mathrm{H}$ be some given target functions. 
	The main goal  is to establish the existence of an optimal control that minimizes the cost functional 
	\begin{equation}\label{cost1}
	\begin{aligned}
	\mathcal{J}(\u,\varphi,\U) &:= \frac{1}{2} \int_0^T \|\nabla\times(\u(t)-\u_d(t))\|^2 \d t+ \frac{1}{2} \int_0^T \|\varphi(t) -\varphi_d(t)\|^2 \d t\\&\quad+\frac{1}{2}\|\u(T)-\u_f\|^2+\frac{1}{2}\|\varphi(T)-\varphi_f\|^2+ \frac{1}{2} \int_0^T\|\U(t)\|^2\d t,\end{aligned}
	\end{equation}
	subject to the constraint  \eqref{nonlin phi}-\eqref{initial conditions} and prove the first order necessary condition of optimality using the Ekeland variational principle. Note that the cost functional given in (\ref{cost1}) is the sum of total enstrophy of the average velocity field, $\mathbb{L}^2$-energy of the relative concentration and total effort by the distributed controls. An easy calculation shows that 
	\begin{align*}
	\|\mathrm{A}^{1/2}\u\|=\|\nabla\times\u\|=\|\nabla\u\|,
	\end{align*}
	where $\mathrm{A}$ is the Stokes operator defined in \eqref{stokes}. Thus, the associated cost functional given in (\ref{cost1}) becomes 
	\begin{equation}\label{cost}
	\begin{aligned}
	\mathcal{J}(\u,\varphi,\U) &:= \frac{1}{2} \int_0^T \|\nabla(\u(t)-\u_d(t))\|^2 \d t+ \frac{1}{2} \int_0^T \|\varphi(t) -\varphi_d(t)\|^2 \d t\\&\quad+\frac{1}{2}\|\u(T)-\u_f\|^2+\frac{1}{2}\|\varphi(T)-\varphi_f\|^2+ \frac{1}{2} \int_0^T\|\U(t)\|^2\d t,\end{aligned}
	\end{equation}
	where $\u_d(\cdot)$, $\u_f$, $\varphi_d(\cdot)$ and $\varphi_f$ are the desired states. For the rest of the paper, we consider the formulation given by \eqref{cost}
	which would intern minimize the enstrophy. We consider set of admissible controls to be the space consisting of controls $\U \in\mathrm{L}^{2}(0,T;\G_{\text{div}})$, and denote it by $\mathscr{U}_{\text{ad}}$ .
	
\begin{example} \label{example}
 As an another example one can take $\mathscr{U}_{\text{ad}}$ to be $\mathrm{L}^{2}(0,T;B(0,R))$ where $B(0,R)$ is a ball of radius $R$ and centre at $0$ in $\G_{\text{div}}$ . 
\end{example}

	For our further discussion we take $\mathscr{U}_{\text{ad}}$  to be whole space $ \mathrm{L}^{2}(0,T;\G_{\text{div}})$.

	\begin{definition}[Admissible Class]\label{definition 1}
		The \emph{admissible class} $\mathscr{A}_{\text{ad}}$ of triples $(\u,\varphi,\U)$ is defined as the set of states $(\u,\varphi)$ with initial data $\u_0 \in \V_{\text{div}}$, $\varphi_0 \in \mathrm{V} \cap \mathrm{L}^{\infty}$, $a\in\mathrm{H}^2(\Omega)$ and $\F\in\C^3(\R)$, solving the system \eqref{nonlin phi}-\eqref{initial conditions} with control $\U \in \mathscr{U}_{ad}$.  That is,
		\begin{align*}
		\mathscr{A}_{\text{ad}}:=\Big\{(\u,\varphi,\U) :(\u,\varphi)\text{ is \emph{a unique strong solution} of }\eqref{nonlin phi}\text{-}\eqref{initial conditions}  \text{ with control }\U\Big\}.
		\end{align*}
	\end{definition}
	In view of the above definition, the optimal control problem we are considering can be formulated as
	\begin{align}\label{control problem}\tag{OCP}
	\min_{ (\u,\varphi,\U) \in \mathscr{A}_{\text{ad}}}  \mathcal{J}(\u,\varphi,\U).
	\end{align}
	\begin{definition}[Optimal Solution]
		A solution to the Problem \eqref{control problem} is called an \emph{optimal solution} and the optimal triplet is denoted by $(\u^* ,\varphi^*, \U^*)$. The control $\U^*$ is called an \emph{optimal control}.
	\end{definition}
	
	Our main aim in this work is to establish the existence of an optimal control to the problem \eqref{control problem} and establish the first order necessary conditions of optimality. From the well known theory for the optimal control problems  governed by ordinary and partial differential equations, we know that the optimal control is derived in terms of the adjoint variable which satisfies a linear system. As a first step to fulfill this aim, we linearize the nonlinear system and obtain the existence and uniqueness of weak solution of the linearized system using a standard Galerkin approximation technique (see \cite{BDM} also).

	\subsection{The linearized system}
	Let us linearize the equations \eqref{nonlin phi}-\eqref{initial conditions}  around $(\widehat{\u}, \widehat{\varphi})$, which is the \emph{unique strong solution} of the system (\ref{nonlin phi})-(\ref{initial conditions}) with control term $\U=0$ (uncontrolled system) and external forcing $\widehat{\h}$ such that
	$$\widehat{\h}\in \mathrm{L}^2(0,T;\G_{\text{div}}),\ \widehat{\u}_0\in\V_{\text{div}},\ \widehat{\varphi}_0\in\mathrm{V}\cap\mathrm{L}^{\infty}(\Omega).$$ 
	Let us first  rewrite the equation \eqref{nonlin u}. We know that 
	\begin{align*} \mu\nabla \varphi = (a\varphi - \J\ast \varphi + \F'(\varphi)) \nabla \varphi= \nabla \left(\F(\varphi) + a\frac{\varphi^2}{2}\right) - \nabla a\frac{\varphi^2}{2} - (\J\ast \varphi)\nabla \varphi. \end{align*}
	Hence  one can rewrite \eqref{nonlin u} as
	\begin{equation}\label{nonlin u rewritten}
	\u_t - \nu \Delta \u + (\u\cdot \nabla )\u + \nabla \widetilde{\uppi}_{\u} = - \nabla a\frac{\varphi^2}{2} - (\J\ast \varphi)\nabla \varphi +\h + \U,
	\end{equation} 
	where $\widetilde{\uppi}_{\u} = \uppi -\left( \F(\varphi) + a\frac{\varphi^2}{2}\right)$. It should be noted that the pressure is also an unknown quantity. In order to linearize, we substitute $\u= \w+ \widehat{\u},$ $\uppi=\widetilde\uppi+\widehat{\uppi}$ and $\varphi = \psi + \widehat{\varphi}$ in  \eqref{nonlin u rewritten} and \eqref{nonlin phi} to  obtain 
	$$\w_t - \nu \Delta \w + (\w \cdot \nabla)\widehat{\u} + (\widehat{\u} \cdot \nabla )\w + \nabla \widetilde{\uppi}_\w = -\nabla a\psi \widehat{\varphi} -(\J\ast \psi) \nabla \widehat{\varphi} - (\J \ast \widehat{\varphi}) \nabla \psi +\widetilde \h +\U, $$ 
	where $ \widetilde{\uppi}_\w = \widetilde{\uppi}-(\F'(\widehat{\varphi})+a\widehat{\varphi})\psi$, $\widetilde{\h}=\h-\widehat{\h}$. Also, we have 
	$$\psi_t + \w\cdot \nabla \widehat{\varphi} + \widehat{\u} \cdot \nabla \psi = \Delta \widetilde{\mu} $$   
	where $\widetilde{\mu} = a \psi - \J\ast \psi + \F''( \widehat{\varphi}) \psi$.
	Hence, we consider the following linearized system:
	\begin{subequations}
		\begin{align}
		\w_t - \nu \Delta \w + (\w \cdot \nabla )\widehat{\u} + (\widehat{\u} \cdot \nabla )\w + \nabla \widetilde{\uppi}_\w &= \ -\nabla a\psi \widehat{\varphi}  - (\J\ast \psi) \nabla \widehat{\varphi} \nonumber \\ &- (\J \ast \widehat{\varphi}) \nabla \psi  + \widetilde\h +\U, \ \text{in } \  \Omega \times (0,T)\label{lin w} \\
		\psi_t + \w\cdot \nabla \widehat{\varphi} + \widehat{\u} \cdot\nabla \psi &= \Delta \widetilde{\mu}, \ \text{in } \  \Omega \times (0,T)\label{lin psi} \\ 
		\widetilde{\mu} &= a \psi - \J\ast \psi + \F''(\widehat{\varphi})\psi,\ \text{in } \  \Omega \times (0,T) \label{lin mu}\\
		\text{div }\w &= 0,\ \text{in } \  \Omega \times (0,T) \label{lin div zero}\\
		\frac{\partial \widetilde{\mu}}{\partial\mathbf{n}} &= 0, \ \w=\mathbf{0}  \ \text{on } \ \partial \Omega \times (0,T), \label{lin boundary conditions}\\
		\w(0) &= \w_0, \ \psi(0) = \psi _0 \ \text{ in } \ \Omega. \label{lin initial conditions}
		\end{align} 
	\end{subequations}
	Note that in \eqref{lin mu}, we used the Taylor formula:
	$\F'(\psi+\widehat{\varphi})=\F'(\widehat{\varphi})+\F''(\widehat{\varphi})\psi+\F'''(\widehat{\varphi})\frac{\psi^2}{2}+\cdots,$ and ignored the second order terms in $\psi$, since we are considering a linear system, and we know that $\widehat{\varphi}\in\mathrm{L}^{\infty}((0,T)\times\Omega)$ and $\F(\cdot)$ has a polynomial growth as discussed in Remark \ref{remark F}. 
	In order to establish the Pontryagin maximum principle in the next section, we take $\widetilde{\h}=\w_0=\mathbf{0}$ and $\psi_0=0$. 
	
	Next, we discuss the unique global solvability results for the system (\ref{lin w})-(\ref{lin initial conditions}). 
	
\begin{theorem}[Existence and Uniqueness of Linearized System]\label{linearized}
	Suppose that the Assumption \ref{prop of F and J} is satisfied. Let us assume $(\widehat{\u},\widehat{\varphi})$ is the unique strong solution of the system \eqref{nonlin phi}-\eqref{initial conditions} with the regularity given in (\ref{sol1}) and $\F \in\C^3(\mathbb{R}),\ a \in\mathrm{H}^2(\Omega).$ Let $\w_0 \in \G_{\text{div}}$ and $\psi_0 \in \mathrm{H}$ with $\widetilde{\h},\U \in \mathrm{L}^2(0,T;\G_{\text{div}})$.  Then, for a given $T >0$, there exists \emph{a unique weak solution} $(\w,\psi)$ to the system \eqref{lin w}-\eqref{lin initial conditions} such that
	$$\w \in \mathrm{L}^{\infty}(0,T;\G_{\text{div}}) \cap \mathrm{L}^{2}(0,T;\V_{\text{div}})\ \text{ and 
	} \ \psi \in \mathrm{L}^{\infty}(0,T;\mathrm{H}) \cap  \mathrm{L}^{2}(0,T;\mathrm{V}).$$ and for every $\v \in \V_{\text{div}}$ and $\xi \in \mathrm{V}$ and for all $t \in (0,T)$, we have  
	\begin{eqnarray}\label{weak_lin}
	\left\{
	\begin{aligned}
	\langle\w_t,\v\rangle + \nu (\nabla\w,\nabla \v) + b(\w,\widehat{\u},\v) + b(\widehat{\u},\w,\v)  &= - (\nabla a\psi \widehat{\varphi},\v) - ((\J\ast \psi) \nabla \widehat{\varphi},\v)  \\  
	&	\quad - ((\J \ast \widehat{\varphi}) \nabla \psi,\v) + (\widetilde\h,\v) + (\U,\v),  \\ 
	\langle\psi_t,\xi\rangle + (\w\cdot \nabla \widehat{\varphi},\xi) + (\widehat{\u} \cdot \nabla \psi,\xi) &= -(\nabla \widetilde{\mu},\nabla \xi), 
	\end{aligned}
	\right.
	\end{eqnarray}
	where $\w(0) = \w_0, \ \psi(0) = \psi _0$ are satisfied in the weak sense. 
\end{theorem}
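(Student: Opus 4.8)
The plan is to construct the solution by a Faedo--Galerkin scheme and pass to the limit, the whole argument being considerably simplified by the fact that \eqref{lin w}--\eqref{lin initial conditions} is \emph{linear} in the unknown $(\w,\psi)$, all the variable coefficients being frozen at the strong solution $(\widehat{\u},\widehat{\varphi})$. First I would fix the orthonormal eigenbasis $\{\me_j\}_{j\geq1}\subset\D(\A)$ of the Stokes operator $\A$ in $\G_{\text{div}}$ and an orthonormal basis $\{\xi_j\}_{j\geq1}$ of $\mathrm{H}$ consisting of eigenfunctions of the Neumann operator $\mathcal{B}$ (so that the Neumann condition on $\widetilde{\mu}$ is respected), and seek
\[
\w^n(t)=\sum_{j=1}^n a_j^n(t)\,\me_j,\qquad \psi^n(t)=\sum_{j=1}^n b_j^n(t)\,\xi_j,
\]
projecting \eqref{weak_lin} onto $\mathrm{span}\{\me_1,\dots,\me_n\}$ and $\mathrm{span}\{\xi_1,\dots,\xi_n\}$. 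Since every coefficient depends linearly on $(a^n,b^n)$ with time-integrable weights coming from $\widehat{\u},\widehat{\varphi}$ and $\widetilde{\h},\U\in\mathrm{L}^2(0,T;\G_{\text{div}})$, the Carath\'eodory theorem yields a unique absolutely continuous solution of the resulting linear ODE system, and linearity excludes finite-time blow-up, so the approximants exist on all of $[0,T]$.

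The heart of the matter is a uniform energy estimate. Testing the approximate velocity equation with $\w^n$ and the concentration equation with $\psi^n$, and using $b(\widehat{\u},\w^n,\w^n)=0$ together with $(\widehat{\u}\cdot\nabla\psi^n,\psi^n)=0$, and expanding $\widetilde{\mu}=(a+\F''(\widehat{\varphi}))\psi-\J\ast\psi$ so that $\nabla\widetilde{\mu}=\nabla(a+\F''(\widehat{\varphi}))\psi+(a+\F''(\widehat{\varphi}))\nabla\psi-(\nabla\J)\ast\psi$, I would arrive at
\[
\tfrac12\tfrac{\d}{\d t}\Big(\|\w^n\|^2+\|\psi^n\|^2\Big)+\nu\|\nabla\w^n\|^2+\int_\Omega\big(a+\F''(\widehat{\varphi})\big)\,|\nabla\psi^n|^2\,\d x=\mathcal{R},
\]
where $\mathcal{R}$ collects all remaining terms. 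The decisive structural input is Assumption \ref{prop of F and J}(2): since $a+\F''(\widehat{\varphi})\geq C_0>0$, the last term on the left dominates $C_0\|\nabla\psi^n\|^2$ and furnishes the dissipation for $\psi$. Every term in $\mathcal{R}$ is then controlled using the frozen-coefficient regularity provided by the strong solution---$\widehat{\u}\in\mathrm{L}^{\infty}(0,T;\V_{\text{div}})$, $\widehat{\varphi}\in\mathrm{L}^{\infty}((0,T)\times\Omega)$, $\nabla\widehat{\varphi}\in\mathrm{L}^{\infty}(0,T;\mathbb{L}^p)$ for all $p<\infty$ by \eqref{0.23}, $a\in\mathrm{H}^2\hookrightarrow\mathrm{L}^{\infty}$ with $\nabla a\in\mathbb{L}^4$, and boundedness of $\F''(\widehat{\varphi}),\F'''(\widehat{\varphi})$---combined with the Ladyzhenskaya inequality (Lemma \ref{lady}), Young's convolution inequality for the kernel terms (giving $\|\J\ast\psi\|_{\mathrm{L}^{\infty}}\leq\|\J\|_{\mathrm{L}^2}\|\psi\|$ and $\|(\nabla\J)\ast\psi\|\leq\|\nabla\J\|_{\mathrm{L}^1}\|\psi\|$), and Young's inequality.

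The single genuinely delicate term is $((\J\ast\widehat{\varphi})\nabla\psi^n,\w^n)$, which is the only contribution carrying $\nabla\psi$ onto the velocity side; bounding $\|\J\ast\widehat{\varphi}\|_{\mathrm{L}^{\infty}}\leq\|\J\|_{\mathrm{L}^1}\|\widehat{\varphi}\|_{\mathrm{L}^{\infty}}$ and splitting by Young gives $\delta\|\nabla\psi^n\|^2+C_\delta\|\w^n\|^2$, and choosing $\delta<C_0$ lets the $\delta\|\nabla\psi^n\|^2$ be absorbed by the coercive term, while $b(\w^n,\widehat{\u},\w^n)$ is absorbed into $\nu\|\nabla\w^n\|^2$ after Ladyzhenskaya and Young. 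Gr\"onwall's lemma then yields
\[
\|\w^n\|_{\mathrm{L}^{\infty}(0,T;\G_{\text{div}})\cap\mathrm{L}^2(0,T;\V_{\text{div}})}+\|\psi^n\|_{\mathrm{L}^{\infty}(0,T;\mathrm{H})\cap\mathrm{L}^2(0,T;\mathrm{V})}\leq C,
\]
uniformly in $n$. From the equations I would next bound $\w^n_t$ in $\mathrm{L}^2(0,T;\V_{\text{div}}')$ and $\psi^n_t$ in $\mathrm{L}^2(0,T;\mathrm{V}')$, so the Aubin--Lions--Simon lemma gives strong convergence of $\w^n,\psi^n$ in $\mathrm{L}^2(0,T;\mathrm{H})$ (and the corresponding weak/weak-$\ast$ limits in the energy spaces). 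Because the system is linear with fixed coefficients, weak convergence already suffices to pass to the limit in every term of \eqref{weak_lin} against the fixed test functions $\me_k,\xi_k$; a density argument extends the identities to all $\v\in\V_{\text{div}},\ \xi\in\mathrm{V}$, and the time-derivative bounds recover the initial data in the weak sense. Finally, uniqueness is immediate from linearity: the difference of two solutions with identical data solves the homogeneous problem ($\widetilde{\h}=\U=0$, zero initial data), so the same energy estimate with $\mathcal{R}$ forcing-free together with Gr\"onwall forces $\w\equiv\mathbf{0}$, $\psi\equiv0$. The main obstacle is thus the bookkeeping of $\mathcal{R}$: securing the coercivity $\int_\Omega(a+\F''(\widehat{\varphi}))|\nabla\psi^n|^2\geq C_0\|\nabla\psi^n\|^2$ and then closing the estimate by absorbing the cross term into the \emph{combined} dissipation $\nu\|\nabla\w^n\|^2+C_0\|\nabla\psi^n\|^2$, since neither equation controls the coupling on its own.
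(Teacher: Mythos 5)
Your proposal is correct, and its scaffolding --- Galerkin bases of Stokes and Neumann eigenfunctions, linear ODE theory giving global-in-time approximants, a uniform energy bound, Banach--Alaoglu plus Aubin--Lions, passage to the limit using linearity of the system, and uniqueness from the homogeneous estimate --- is exactly that of the paper. Where you genuinely depart from the paper is in the core energy estimate for the concentration: you test the $\psi$-equation with $\psi_n$ itself and expand $\nabla\widetilde{\mu}_n=(a+\F''(\widehat{\varphi}))\nabla\psi_n+\psi_n\nabla\bigl(a+\F''(\widehat{\varphi})\bigr)-(\nabla\J)\ast\psi_n$, so that Assumption \ref{prop of F and J} (2) immediately yields the dissipation $C_0\|\nabla\psi_n\|^2$, at the cost of the commutator terms $(\psi_n\nabla a,\nabla\psi_n)$ and $(\F'''(\widehat{\varphi})\nabla\widehat{\varphi}\,\psi_n,\nabla\psi_n)$, which you control via $\|\nabla a\|_{\mathbb{L}^{\infty}}\leq\|\nabla\J\|_{\mathbb{L}^1}$ and the strong-solution bounds $\widehat{\varphi}\in\mathrm{L}^{\infty}((0,T)\times\Omega)$, $\nabla\widehat{\varphi}\in\mathrm{L}^{\infty}(0,T;\mathbb{L}^4)$. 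The paper instead tests with $\widetilde{\mu}_n+\widetilde{\P}_n(\J\ast\psi_n)$, which places the weighted norm $\|\sqrt{a+\F''(\widehat{\varphi})}\,\psi_n\|^2$ under the time derivative and produces $\|\nabla\widetilde{\mu}_n\|^2$ as dissipation; it must then run precisely your expansion as an auxiliary step (see \eqref{l37}--\eqref{l39}) to convert $\|\nabla\widetilde{\mu}_n\|^2$ back into $\|\nabla\psi_n\|^2$ control, and in addition must estimate the term $(\F'''(\widehat{\varphi})\widehat{\varphi}_t\psi_n,\psi_n)$ created by differentiating the weight in time, which consumes the regularity $\widehat{\varphi}_t\in\mathrm{L}^{2}(0,T;\mathrm{H})$ of the strong solution. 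Your route is therefore shorter and never touches $\widehat{\varphi}_t$; this is possible precisely because in the linearized problem $\F''(\widehat{\varphi})$ is already bounded (since $\widehat{\varphi}\in\mathrm{L}^{\infty}$), whereas the paper's weighted test function is the device inherited from the nonlinear existence theory, where testing by the order parameter alone is not viable. Your treatment of the one delicate coupling term is also sound: $|((\J\ast\widehat{\varphi})\nabla\psi_n,\w_n)|\leq\|\J\|_{\mathbb{L}^1}\|\widehat{\varphi}\|_{\mathrm{L}^{\infty}}\|\nabla\psi_n\|\|\w_n\|$ is absorbed into the combined dissipation by taking $\delta<C_0$, while the paper sidesteps the absorption by integrating by parts, $((\J\ast\widehat{\varphi})\nabla\psi_n,\w_n)=-((\nabla\J\ast\widehat{\varphi})\psi_n,\w_n)$, which is marginally cleaner but not essential. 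One cosmetic slip: the parenthetical bound $\|\J\ast\psi\|_{\mathrm{L}^{\infty}}\leq\|\J\|_{\mathrm{L}^2}\|\psi\|$ requires $\J\in\mathrm{L}^2$, which is not among the hypotheses ($\J\in\W^{1,1}$ only); nothing in your argument actually needs it, since $\|\J\ast\psi_n\|\leq\|\J\|_{\mathbb{L}^1}\|\psi_n\|$, $\|(\nabla\J)\ast\psi_n\|\leq\|\nabla\J\|_{\mathbb{L}^1}\|\psi_n\|$ and the $\mathrm{L}^{\infty}$ bound on $\J\ast\widehat{\varphi}$ above cover every kernel term.
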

\begin{proof}
	In order to prove the existence and uniqueness of the linearized system \eqref{lin w}-\eqref{lin initial conditions}, we use a Faedo-Galerkin approximation scheme and show that the approximate solutions converge in appropriate spaces to the solution of the linearized system in the weak sense (see \eqref{weak_lin}). 
	
	Let us introduce the family $\{\me_j\}_{j=1}^\infty$ of eigen functions of the Stokes operator as a Galerkin basis of $\V_{\text{div}}$ and the family $\{\xi_j\}_{j=1}^\infty$ of eigen functions of the Neumann operator $\mathcal{B}_1=-\Delta+\I$ as a Galerkin basis of $\mathrm{V}$. Let us define the n-dimensional subspaces $\V_n$ and $\mathrm{V}_n$ as span of $\{\me_j\}_{j=1}^n$ and $\{e_j\}_{j=1}^n$ respectively. Let us take orthogonal projector on $\V_n$ and $\mathrm{V}_n$ in $\G_{\text{div}}$ and $\mathrm{H}$ respectively as $\mathrm{P}_n=\P_{\V_n}$ and $\widetilde{\P}_n=\P_{\mathrm{V}_n}$. Then we can  find   $(\w_n, \psi_n)$ of the form  $$\w_n = \sum_{i=1}^n \alpha_i^{(n)}(t)\me_i \ \text{     and    } \ \psi_n = \sum_{i=1}^n \beta_i^{(n)}(t)e_i,$$ 	as a solution of the following approximated system:
	\begin{align}
&\langle{(\w_n)}_t,\v_n\rangle+ \nu (\nabla\w_n,\nabla \v_n) + b(\w_n,\widehat{\u},\v_n) + b(\widehat{\u},\w_n,\v_n) + \langle\nabla \widetilde{\uppi}_{\w_n},\v_n\rangle\nonumber\\&\quad= - (\nabla a\psi_n \widehat{\varphi},\v_n)  
	 - ((\J\ast \psi_n)\nabla \widehat{\varphi},\v_n)- ((\J \ast \widehat{\varphi})\nabla \psi_n,\v_n) + (\widetilde\h_n,\v_n) +( \U_n,\v_n), \label{f1} \\ 
&	\langle{(\psi_n)}_t,\xi_n\rangle+ (\w_n\cdot \nabla \widehat{\varphi},\xi_n) + (\widehat{\u} \cdot \nabla \psi_n,\xi_n) = -(\nabla \widetilde{\mu}_n,\nabla \xi_n), \label{f3} \\ 
&	\widetilde{\mu}_n = \widetilde{\mathrm{P}}_n(a\psi_n - \J\ast \psi_n + \F''(\widehat{\varphi})\psi_n) ,\label{f4}  \\ 
	&\w_n(0) = \P_n\w_0=\sum_{i=1}^n(\w_0,\me_i)\me_i=:\w_{n,0}, \ \psi_n(0) = \widetilde{\mathrm{P}}_n\psi_
	0=\sum_{i=1}^n(\psi_0,e_i)e_i=:\psi_{n,0}, \label{f5}
\end{align}	
	for every $\v_n\in \V_n$ and $\xi_n \in \mathrm{V}_n$. Observe that for every $\v_n\in\V_n$, $\P_n\v_n=\v_n$ and for every $\xi_n\in\mathrm{V}_n$, $\widetilde{\P}_n\xi_n=\xi_n$. Note that in  \eqref{f1}, $\mathrm{U}_n=\mathrm{P}_n\U$ and $\widetilde{\h}_n=\P_n\widetilde{\h}$.  In \eqref{f1}, we also used the fact that $(\P_n(\nabla a\psi_n \widehat{\varphi}),\v_n) =(\nabla a\psi_n \widehat{\varphi},\P_n\v_n)=(\nabla a\psi_n \widehat{\varphi},\v_n)$, and similarly for other terms.  Since the system is linear, using the Cauchy-Lipschitz theorem or Picard-Lindel\"of theorem, we can easily show that there exists a local in time unique solution $(\w_n,\psi_n)$ to the approximated system \eqref{f1}-\eqref{f5}. Now we show that $\w_n$ and $\psi_n$ are uniformly bounded in some suitable function spaces. 
	
	Let us take  test function $\v_n$ as $\w_n$ in \eqref{f1} to obtain 
	\begin{align}\label{3.3}
	&\frac{1}{2}\frac{\d}{\d t}\|\w_n(t)\|^2 + \nu \|\nabla\w_n(t)\|^2 + b(\w_n,\widehat{\u},\w_n) + \langle\nabla \widetilde{\uppi}_{\w_n},\w_n\rangle \nonumber\\&= - (\nabla a\psi_n \widehat{\varphi},\w_n) - ((\J\ast \psi_n) \nabla \widehat{\varphi},\w_n) 
	- ((\J \ast \widehat{\varphi}) \nabla \psi_n,\w_n) + (\widetilde\h,\w_n) +( \U,\w_n),
	\end{align}
	where we used the fact that $b(\widehat{\u},\w_n,\w_n)=0$. Now we estimate each term of the above equality. 
	In order to estimate $b(\w_n,\widehat{\u},\w_n)$, we use  H\"older's, Ladyzhenskaya and Young's inequalities to find
	\begin{align}\label{3.4}
	|b(\w_n,\widehat{\u},\w_n)| \leq \|\nabla\widehat{\u}\|\|\w_n\|_{\mathbb{L}^4}^2 \leq\sqrt{2}\|\nabla\widehat{\u}\|\|\w_n\|\|\nabla\w_n\|&\leq \frac{\nu}{12}\|\nabla\w_n\|^2\nonumber \\
&\qquad+\frac{6}{\nu}\|\nabla\widehat{\u}\|^2\|\w_n\|^2.
	\end{align}
	Using an integration by parts and divergence free condition $\text{div }\w_n=0$,  we get, 
	$$ \langle\nabla \widetilde{\uppi}_{\w_n},\w_n\rangle = (\widetilde{\uppi}_{\w_n}, \text{div }\w_n) = 0.$$
	Once again using H\"older's, Ladyzhenskaya and Young's inequalities, we obtain 
	\begin{align}\label{3.5}
	|(\nabla a\psi_n \widehat{\varphi},\w_n)| &\leq \|\nabla a\|_{\mathbb{L}^{\infty}}\|\psi_n\|\|\widehat{\varphi}\|_{\mathrm{L}^{4}}\|\w_n\|_{\mathbb{L}^4} \no\\&\leq 2^{1/4}\|\nabla a\|_{\mathbb{L}^{\infty}}\|\psi_n\|\|\widehat{\varphi}\|_{\mathrm{L}^{4}}\|\w_n\|^{1/2}\|\nabla\w_n\|^{1/2} \no\\&\leq \frac{C_0}{10}\|\psi_n\|^2+\frac{5}{\sqrt{2}C_0}\|\nabla a\|_{\mathbb{L}^{\infty}}^2\|\widehat{\varphi}\|_{\mathrm{L}^4}^2\|\w_n\|\|\nabla\w_n\|\no \\&\leq  \frac{C_0}{10}\|\psi_n\|^2+\frac{\nu}{12}\|\nabla\w_n\|^2+\frac{75}{2\nu C_0^2}\|\nabla a\|_{\mathbb{L}^{\infty}}^4\|\widehat{\varphi}\|_{\mathrm{L}^4}^4\|\w_n\|^2.
	\end{align}
	In order to estimate $((\J\ast \psi_n) \nabla \widehat{\varphi},\w_n)$ and $((\J\ast \widehat{\varphi} \nabla \psi_n),\w_n)$, we integrate by parts and use divergence free condition to find  
	\begin{align*}((\J\ast \psi_n) \nabla \widehat{\varphi},\w_n) &= -( ( \nabla \J \ast \psi_n)\widehat{\varphi},\w_n),\\
	((\J\ast \widehat{\varphi}) \nabla \psi_n,\w_n) &= -(( \nabla \J \ast \widehat{\varphi})\psi_n. \w_n),\end{align*}
	We estimate $( ( \nabla \J \ast \psi_n)\widehat{\varphi},\w_n)$  using H\"older's inequality, Ladyzhenskaya inequality and Young's inequality for convolution as
	\begin{align}\label{3.6}
	|( \nabla \J \ast \psi_n) \widehat{\varphi},\w_n)| &\leq \| \nabla \J*\psi_n\|_{\mathbb{L}^2}\|\widehat{\varphi}\|_{\mathrm{L}^4} \|\w_n\|_{\mathbb{L}^4}\leq \| \nabla \J\|_{\mathbb{L}^1}\|\psi_n\|\|\widehat{\varphi}\|_{\mathrm{L}^4} \|\w_n\|_{\mathbb{L}^4}\no\\
	&\leq 2^{1/4}\| \nabla \J\|_{\mathbb{L}^1}\|\psi_n\|\|\widehat{\varphi}\|_{\mathrm{L}^4} \|\w_n\|^{1/2}\|\nabla\w_n\|^{1/2}\no \\
	&\leq \frac{C_0}{10}\|\psi_n\|^2 + \frac{\nu}{12}\|\nabla \w_n\|^2+ \frac{75}{2\nu C_0^2}\| \nabla \J\|_{\mathbb{L}^1}^4\|\widehat{\varphi}\|_{\mathrm{L}^4}^4\|\w_n\|^2.
	\end{align}  
	Similarly, we obtain
	\begin{align}\label{3.7}
	\left|(( \nabla \J \ast \widehat{\varphi})\psi_n, \w_n)\right| &\leq \frac{C_0}{10}\|\psi_n\|^2 + \frac{\nu}{12}\|\nabla \w_n\|^2+ \frac{75}{2\nu C_0^2}\| \nabla \J\|_{\mathbb{L}^1}^4\|\widehat{\varphi}\|_{\mathrm{L}^4}^4\|\w_n\|^2.
	\end{align} 
	Using the Cauchy-Schwarz inequality and Young's inequality, we also get
	\begin{equation}\label{3.8}
	\begin{aligned}
	\left|(\widetilde\h_n,\w_n )\right| &\leq \frac{3}{\nu} \|\widetilde\h_n\|^2+ \frac{\nu}{12}\|\nabla\w_n\|^2\leq \frac{3}{\nu} \|\widetilde\h\|^2+ \frac{\nu}{12}\|\nabla\w_n\|^2,\\
	\left|	(\U_n,\w_n )\right| &\leq  \frac{3}{\nu} \|\U_n\|^2 + \frac{\nu}{12}\|\nabla\w_n\|^2\leq  \frac{3}{\nu} \|\U\|^2 + \frac{\nu}{12}\|\nabla\w_n\|^2.\end{aligned}
	\end{equation}
	Combining (\ref{3.4})-(\ref{3.8}) and substituting in (\ref{3.3}) to find 
	\begin{align}\label{2.22}
	\frac{1}{2}\frac{\d}{\d t}\|\w_n\|^2 +  \frac{\nu}{2}\|\nabla\w_n\|^2  &\leq \frac{3C_0}{10}\|\psi_n\|^2 + \frac{3}{\nu} \|\wi\h\|^2 + \frac{3}{\nu} \|\U\|^2 +\frac{6}{\nu}\|\nabla\widehat{\u}\|^2\|\w_n\|^2\nonumber\\ &\quad+\left[\frac{75}{2\nu C_0^2}\|\widehat{\varphi}\|_{\mathrm{L}^4}^4\left(\|\nabla a\|_{\mathbb{L}^{\infty}}^4+2\|\nabla \J\|_{\mathbb{L}^1}^4\right)\right]\|\w_n\|^2.
	\end{align}
	
	Let us now take the test function $\xi_n$ as $\widetilde{\mu}_n+\widetilde{\P}_n(\J*\psi_n)$ in \eqref{f3} to obtain 
	\begin{align}\label{l29}
	&\left(\frac{\d\psi_n}{\d t},\widetilde{\mu}_n+\widetilde{\P}_n(\J*\psi_n)\right)+\|\nabla\widetilde{\mu}_n\|^2=(\Delta\widetilde{\mu}_n,\widetilde{\P}_n(\J*\psi_n))\nonumber \\
	&\quad-(\w_n\cdot \nabla \widehat{\varphi},\widetilde{\mu}_n+\widetilde{\P}_n(\J*\psi_n))-( \widehat{\u} \cdot\nabla \psi_n ,\widetilde{\mu}_n+\widetilde{\P}_n(\J*\psi_n)).
	\end{align}
	Using \eqref{f4}, we infer that
	\begin{align}\label{l30}
	\left(\frac{\d\psi_n}{\d t},\widetilde{\mu}_n+\widetilde{\P}_n(\J*\psi_n)\right)=\left(\frac{\d\psi_n}{\d t},\widetilde{\P}_n(a  + \F''(\widehat{\varphi}))\psi_n\right)=\left(\frac{\d\psi_n}{\d t},(a  + \F''(\widehat{\varphi}))\psi_n\right).
	\end{align}
	Note that $0<C_0\leq a+\F''(s)$ for all $s\in\R$ and a.e. $x\in\Omega$. Thus, we get 
	\begin{align}\label{l32}
	\frac{\d}{\d t}\left\|\sqrt{a+\F''(\widehat{\varphi})}\psi_n\right\|^2&=\frac{\d}{\d t}((a+\F''(\widehat{\varphi}))\psi_n,\psi_n)\nonumber\\&=\left(\frac{\d}{\d t}((a+\F''(\widehat{\varphi}))\psi_n),\psi_n\right)+\left((a+\F''(\widehat{\varphi}))\frac{\d\psi_n}{\d t},\psi_n\right)\nonumber\\&=(\F'''(\widehat{\varphi})\widehat{\varphi}_t\psi_n,\psi_n)+2\left(\frac{\d\psi_n}{\d t},(a  + \F''(\widehat{\varphi}))\psi_n\right).
	\end{align}
	Substituting(\ref{l32}) in (\ref{l29}), we obtain  
	\begin{align}\label{l33}
	\frac{1}{2}\frac{\d}{\d t}\left\|\sqrt{a+\F''(\widehat{\varphi})}\psi_n\right\|^2+\|\nabla\widetilde{\mu}_n\|^2&=\frac{1}{2}(\F'''(\widehat{\varphi})\widehat{\varphi}_t\psi_n,\psi_n)+(\Delta\widetilde{\mu}_n,\widetilde{\P}_n(\J*\psi_n))\no\\&\quad-(\w_n\cdot \nabla \widehat{\varphi},\widetilde{\mu}_n+\widetilde{\P}_n(\J*\psi_n)) -( \widehat{\u} \cdot\nabla \psi_n ,\widetilde{\mu}_n+\widetilde{\P}_n(\J*\psi_n)).
	\end{align}
	An integration by parts, Cauchy-Schwarz, Young's inequality for convolution and Young's inequality yields
	\begin{align}\label{l34}
	|(\Delta\widetilde{\mu}_n,\widetilde{\P}_n(\J*\psi_n))|&=|(\nabla\widetilde{\mu}_n,\nabla\J*\psi_n)|\leq \|\nabla\widetilde{\mu}_n\|\|\nabla\J*\psi_n\|\leq \|\nabla\widetilde{\mu}_n\|\|\nabla\J\|_{\mathbb{L}^1}\|\psi_n\|\nonumber\\&\leq \frac{1}{6}\|\nabla\widetilde{\mu}_n\|^2+\frac{3}{2}\|\nabla\J\|_{\mathbb{L}^1}^2\|\psi_n\|^2.
	\end{align}
	Using an integration by parts, the divergence free condition of $\w_n$, H\"older, Ladyzhenskaya and Young's inequalities, we estimate $|(\w\cdot \nabla \widehat{\varphi},\widetilde{\mu}_n)|$ as 
	\begin{align}
	|(\w_n\cdot \nabla \widehat{\varphi},\widetilde{\mu}_n)|&=|(\w_n\cdot \nabla \widetilde{\mu}_n,\widehat{\varphi})|\leq \|\w_n\|_{\mathbb{L}^4}\|\nabla\widetilde{\mu}_n\|\|\widehat{\varphi}\|_{\mathrm{L}^4}\leq \frac{1}{6}\|\nabla\widetilde{\mu}_n\|^2+\frac{3}{2}\|\w_n\|_{\mathbb{L}^4}^2\|\widehat{\varphi}\|_{\mathrm{L}^4}^2\nonumber\\&\leq \frac{1}{6}\|\nabla\widetilde{\mu}_n\|^2+\frac{\nu}{8}\|\nabla\w_n\|^2+\frac{9}{\nu}\|\widehat{\varphi}\|^4_{\mathrm{L}^4}\|\w_n\|^2.
	\end{align}
	Once again an integration by parts, H\"older and Young's inequalities yield
	\begin{align}\label{l35}
	|( \widehat{\u} \cdot\nabla \psi_n ,\widetilde{\mu}_n)|&=|( \widehat{\u} \cdot\nabla \widetilde{\mu}_n,\psi_n )|\leq \|\widehat{\u}\|_{\mathbb{L}^4}\|\nabla\widetilde{\mu}_n\|\|\psi_n\|_{\mathrm{L}^4}\leq \frac{1}{6}\|\nabla\widetilde{\mu}_n\|^2+\frac{3}{2}\|\widehat{\u}\|_{\mathbb{L}^4}^2\|\psi_n\|_{\mathrm{L}^4}^2.
	\end{align}
	We use (\ref{l34})-(\ref{l35}) in (\ref{l33}) to obtain 
	\begin{align}\label{l36}
	&\frac{1}{2}\frac{\d}{\d t}\left\|\sqrt{a+\F''(\widehat{\varphi})}\psi_n\right\|^2+\frac{1}{2}\|\nabla\widetilde{\mu}_n\|^2\nonumber\\&\leq \frac{1}{2}(\F'''(\widehat{\varphi})\widehat{\varphi}_t\psi_n,\psi_n)+\frac{\nu}{8}\|\nabla\w_n\|^2+\frac{3}{2}\|\nabla\J\|_{\mathbb{L}^1}^2\|\psi_n\|^2+\frac{9}{\nu}\|\widehat{\varphi}\|^4_{\mathrm{L}^4}\|\w_n\|^2\nonumber\\&\quad +\frac{3}{2}\|\widehat{\u}\|_{\mathbb{L}^4}^2\|\psi_n\|_{\mathrm{L}^4}^2 -(\w_n\cdot \nabla \widehat{\varphi},\widetilde{\P}_n(\J*\psi_n)))-( \widehat{\u} \cdot\nabla \psi_n ,\widetilde{\P}_n(\J*\psi_n))).
	\end{align}
	Note that $C_0\leq a(x)+\F''(s),$ for all $s\in\R$ and a.e. $x\in\Omega$. Using H\"older's, Young's convolution, Ladyzhenskaya and Young's inequalities, we get
	\begin{align}\label{l37}
	\langle-\Delta\psi_n,\widetilde{\mu}_n\rangle&=(\nabla\psi_n,\nabla\widetilde{\mu}_n)=(\nabla\psi_n,\nabla\widetilde{\P}_n(a\psi_n-\J*\psi_n+\F''(\widehat{\varphi})\psi_n))\nonumber\\&=(\nabla\psi_n,a\nabla\psi_n+\psi_n\nabla a-\nabla\J*\psi_n+\F''(\widehat{\varphi})\nabla\psi_n+\F'''(\widehat{\varphi})\nabla\widehat{\varphi}\psi_n)\nonumber\\&\geq C_0\|\nabla\psi_n\|^2-(\|\nabla a\|_{\mathbb{L}^{\infty}}+\|\nabla\J\|_{\mathbb{L}^1})\|\psi_n\|\|\nabla\psi_n\|\no\\&\qquad-\|\F'''(\widehat{\varphi})\|_{\mathrm{L}^{\infty}}\|\nabla\widehat{\varphi}\|_{\mathbb{L}^4}\|\nabla\psi_n\|\|\psi_n\|_{\mathrm{L}^4}
	\nonumber\\&\geq C_0\|\nabla\psi_n\|^2-\frac{C_0}{3}\|\nabla\psi_n\|^2-\frac{3}{2C_0}(\|\nabla a\|_{\mathbb{L}^{\infty}}^2+\|\nabla\J\|_{\mathbb{L}^1}^2)\|\psi_n\|^2\nonumber\\&\quad -\sqrt{2}\|\F'''(\widehat{\varphi})\|_{\mathrm{L}^{\infty}}\|\nabla\widehat{\varphi}\|_{\mathbb{L}^4}\|\nabla\psi_n\|^{3/2}\|\psi_n\|^{1/2}\nonumber\\&\geq \frac{C_0}{2}\|\nabla\psi_n\|^2-\frac{3}{2C_0}(\|\nabla a\|_{\mathbb{L}^{\infty}}^2+\|\nabla\J\|_{\mathbb{L}^1}^2)\|\psi_n\|^2\no\\&\qquad-\frac{729}{8C_0^3}\|\F'''(\widehat{\varphi})\|_{\mathrm{L}^{\infty}}^4\|\nabla\widehat{\varphi}\|_{\mathbb{L}^4}^4\|\psi_n\|^2.
	\end{align}
	But, we also have 
	\begin{equation}\label{l38}
	(\nabla\psi_n,\nabla\widetilde{\mu}_n)\leq \|\nabla\psi_n\|\|\nabla\widetilde{\mu}_n\|\leq \frac{C_0}{4}\|\nabla\psi_n\|^2+\frac{1}{C_0}\|\nabla\widetilde{\mu}_n\|^2. 
	\end{equation}
	Combining (\ref{l37}) and (\ref{l38}), we find
	\begin{equation}\label{l39}
	\frac{C_0}{4}\|\nabla\psi_n\|^2\leq \frac{1}{C_0}\|\nabla\widetilde{\mu}_n\|^2+\frac{3}{2C_0}(\|\nabla a\|_{\mathbb{L}^{\infty}}^2+\|\nabla\J\|_{\mathbb{L}^1}^2)\|\psi_n\|^2+\frac{729}{8C_0^3}\|\F'''(\widehat{\varphi})\|_{\mathrm{L}^{\infty}}^4\|\nabla\widehat{\varphi}\|_{\mathbb{L}^4}^4\|\psi_n\|^2.
	\end{equation}
	Let us substitute (\ref{l39}) in (\ref{l36}) to get 
	\begin{align}\label{l40}
	&\frac{1}{2}\frac{\d}{\d t}\left\|\sqrt{a+\F''(\widehat{\varphi})}\psi_n\right\|^2+\frac{C_0^2}{8}\|\nabla\psi_n\|^2\nonumber\\&\leq \frac{1}{2}(\F'''(\widehat{\varphi})\widehat{\varphi}_t\psi_n,\psi_n)+\frac{\nu}{8}\|\nabla\w_n\|^2+\frac{3}{2}\|\nabla\J\|_{\mathbb{L}^1}^2\|\psi_n\|^2+\frac{9}{\nu}\|\widehat{\varphi}\|^4_{\mathrm{L}^4}\|\w_n\|^2\nonumber\\&\quad+\frac{3}{2}\|\widehat{\u}\|_{\mathbb{L}^4}^2\|\psi_n\|_{\mathrm{L}^4}^2 +\frac{3}{4}(\|\nabla a\|_{\mathbb{L}^{\infty}}^2+\|\nabla\J\|_{\mathbb{L}^1}^2)\|\psi_n\|^2+\frac{729}{16C_0^2}\|\F'''(\widehat{\varphi})\|_{\mathrm{L}^{\infty}}^4\|\nabla\widehat{\varphi}\|_{\mathbb{L}^4}^4\|\psi_n\|^2\nonumber\\&\quad  -(\w_n\cdot \nabla \widehat{\varphi},\widetilde{\P}_n(\J*\psi_n))-( \widehat{\u} \cdot\nabla \psi_n ,\widetilde{\P}_n(\J*\psi_n)).
	\end{align}
	We use  H\"older's, Ladyzhenskaya, Poincar\'e, convolution and Young's inequalities to estimate the terms $\frac{3}{2}\|\widehat{\u}\|_{\mathbb{L}^4}^2\|\psi_n\|_{\mathrm{L}^4}^2$, $(\F'''(\widehat{\varphi})\widehat{\varphi}_t\psi_n,\psi_n)$, $(\w_n\cdot \nabla \widehat{\varphi},\widetilde{\P}_n(\J*\psi_n))$ and $( \widehat{\u} \cdot\nabla \psi_n ,\widetilde{\P}_n(\J*\psi_n))$  as 	
\begin{align}
	\frac{3}{2}\|\widehat{\u}\|_{\mathbb{L}^4}^2\|\psi_n\|_{\mathrm{L}^4}^2&\leq \frac{3}{\sqrt{2}}\|\widehat{\u}\|_{\mathbb{L}^4}^2\|\psi_n\|\|\nabla\psi_n\|\leq \frac{C_0^2}{48}\|\nabla\psi_n\|^2+\frac{54}{C_0^2}\|\widehat{\u}\|_{\mathbb{L}^4}^4\|\psi_n\|^2,\label{l41}
\\
	|(\F'''(\widehat{\varphi})\widehat{\varphi}_t\psi_n,\psi_n)|&\leq \|\F'''(\widehat{\varphi})\|_{\mathrm{L}^{\infty}}\|\widehat{\varphi}_t\|\|\psi_n\|_{\mathrm{L}^4}^2\nonumber\\&\leq \frac{C_0^2}{48}\|\nabla\psi_n\|^2+\frac{24}{C_0^2}\|\F'''(\widehat{\varphi})\|_{\mathrm{L}^{\infty}}^2\|\widehat{\varphi}_t\|^2\|\psi_n\|^2,
\\
	|(\w_n\cdot \nabla \widehat{\varphi},\widetilde{\P}_n(\J*\psi_n))|&\leq \|\w_n\|_{\mathbb{L}^4}\|\nabla\widehat{\varphi}\|_{\mathbb{L}^4}\|\J*\psi_n\|  \nonumber\\&\leq \frac{\nu}{8}\|\nabla\w_n\|^2+\frac{2}{\nu}\sqrt{\frac{2}{\lambda_1}} \|\nabla\widehat{\varphi}\|_{\mathbb{L}^4}^2\|\J\|_{\mathrm{L}^1}^2\|\psi_n\|^2,\\
	|( \widehat{\u} \cdot\nabla \psi_n ,\widetilde{\P}_n(\J*\psi_n))|&\leq \|\widehat{\u}\|_{\mathbb{L}^4}\|\nabla\psi_n\|\|\J\|_{\mathrm{L}^1}\|\psi_n\|_{\mathbb{L}^4}\nonumber\\&\leq \frac{C_0^2}{48}\|\nabla\psi_n\|^2+\frac{1296}{C_0^6}\|\widehat{\u}\|_{\mathbb{L}^4}^4\|\J\|_{\mathrm{L}^1}^4\|\psi_n\|^2.\label{l44}
	\end{align}
	We substitute (\ref{l41})-(\ref{l44}) in (\ref{l40}) to obtain 
	\begin{align}\label{l45}
	&\frac{1}{2}\frac{\d}{\d t}\left\|\sqrt{a+\F''(\widehat{\varphi})}\psi_n\right\|^2+\frac{C_0^2}{16}\|\nabla\psi_n\|^2\nonumber\\&\leq \frac{\nu}{4}\|\nabla\w_n\|^2+\frac{9}{\nu}\|\widehat{\varphi}\|^4_{\mathrm{L}^4}\|\w_n\|^2+\left(\frac{3}{4}\|\nabla a\|_{\mathbb{L}^{\infty}}^2+\frac{9}{4}\|\nabla\J\|_{\mathbb{L}^1}^2\right)\|\psi_n\|^2\nonumber\\&\quad+\left(\frac{54}{C_0^2}\|\widehat{\u}\|_{\mathbb{L}^4}^4+ \frac{729}{16C_0^2}\|\F'''(\widehat{\varphi})\|_{\mathrm{L}^{\infty}}^4\|\nabla\widehat{\varphi}\|_{\mathbb{L}^4}^4+\frac{24}{C_0^2}\|\F'''(\widehat{\varphi})\|_{\mathrm{L}^{\infty}}^2\|\widehat{\varphi}_t\|^2\right.\nonumber\\&\qquad\left.+\frac{2}{\nu}\sqrt{\frac{2}{\lambda_1}} \|\nabla\widehat{\varphi}\|_{\mathbb{L}^4}^2\|\J\|_{\mathrm{L}^1}^2+\frac{1296}{C_0^6}\|\widehat{\u}\|_{\mathbb{L}^4}^4\|\J\|_{\mathrm{L}^1}^4\right)\|\psi_n\|^2.
	\end{align}
Let us now add the inequalities (\ref{2.22}) and (\ref{l45}) to get  
	\begin{align}\label{l46}
	&\frac{1}{2}\frac{\d}{\d t}\|\w_n\|^2 +  \frac{\nu}{4}\|\nabla\w_n\|^2 + \frac{1}{2}\frac{\d}{\d t}\left\|\sqrt{a+\F''(\widehat{\varphi})}\psi_n\right\|^2+\frac{C_0^2}{16}\|\nabla\psi_n\|^2\nonumber\\&\leq  \frac{3C}{\nu} \|\wi\h\|^2 + \frac{3}{\nu} \|\U\|^2+ \left(\frac{3C_0}{10}+\frac{3}{4}\|\nabla a\|_{\mathbb{L}^{\infty}}^2+\frac{9}{4}\|\nabla\J\|_{\mathbb{L}^1}^2\right)\|\psi_n\|^2\nonumber\\&\quad+\left(\frac{54}{C_0^2}\|\widehat{\u}\|_{\mathbb{L}^4}^4+ \frac{729}{16C_0^2}\|\F'''(\widehat{\varphi})\|_{\mathrm{L}^{\infty}}^4\|\nabla\widehat{\varphi}\|_{\mathbb{L}^4}^4+\frac{24}{C_0^2}\|\F'''(\widehat{\varphi})\|_{\mathrm{L}^{\infty}}^2\|\widehat{\varphi}_t\|^2\right.\nonumber\\&\qquad\left.+\frac{2}{\nu}\sqrt{\frac{2}{\lambda_1}} \|\nabla\widehat{\varphi}\|_{\mathbb{L}^4}^2\|\J\|_{\mathrm{L}^1}^2+\frac{1296}{C_0^6}\|\widehat{\u}\|_{\mathbb{L}^4}^4\|\J\|_{\mathrm{L}^1}^4\right)\|\psi_n\|^2 \nonumber\\ &\quad+\left[\frac{6}{\nu}\|\nabla\widehat{\u}\|^2+\left(\frac{9}{\nu}+\frac{75}{2\nu C_0^2}\left(\|\nabla a\|_{\mathbb{L}^{\infty}}^4+2\|\nabla \J\|_{\mathbb{L}^1}^4\right)\right)\|\widehat{\varphi}\|_{\mathrm{L}^4}^4\right]\|\w_n\|^2.
	\end{align} 
	Now we integrate the inequality (\ref{l46}) from $0$ to $t$ to obtain
	\begin{align}\label{l47}
	&\|\w_n(t)\|^2+\left\|\sqrt{a+\F''(\widehat{\varphi}(t))}\psi_n(t)\right\|^2+\frac{\nu}{2}\int_0^t\|\nabla\w_n(s)\|^2\d s+\frac{C_0^2}{8}\int_0^t\|\nabla\psi_n(s)\|^2\d s\nonumber\\&\leq \|\w_{n,0}\|^2+\left\|\sqrt{a+\F''(\widehat{\varphi}(0))}\psi_{n,0}\right\|^2+\frac{6C}{\nu}\int_0^t\|\widetilde{\h}(s)\|^2\d s+\frac{6}{\nu}\int_0^t\|\U(s)\|^2\d s\nonumber\\&\quad+\int_0^t \left[\frac{6}{\nu}\|\nabla\widehat{\u}(s)\|^2+\left(\frac{9}{\nu}+\frac{75}{2\nu C_0^2}\left(\|\nabla a\|_{\mathbb{L}^{\infty}}^4+2\|\nabla \J\|_{\mathbb{L}^1}^4\right)\right)\|\widehat{\varphi}(s)\|_{\mathrm{L}^4}^4\right]\|\w_n(s)\|^2\d s\nonumber\\&\quad + \left(\frac{3C_0}{10}+\frac{3}{4}\|\nabla a\|_{\mathbb{L}^{\infty}}^2+\frac{9}{4}\|\nabla\J\|_{\mathbb{L}^1}^2\right)\int_0^t\|\psi_n(s)\|^2\d s \nonumber\\&\quad+\int_0^t\left(\frac{54}{C_0^2}\|\widehat{\u}(s)\|_{\mathbb{L}^4}^4+ \frac{729}{16C_0^2}\|\F'''(\widehat{\varphi}(s))\|_{\mathrm{L}^{\infty}}^4\|\nabla\widehat{\varphi}(s)\|_{\mathbb{L}^4}^4+\frac{24}{C_0^2}\|\F'''(\widehat{\varphi}(s))\|_{\mathrm{L}^{\infty}}^2\|\widehat{\varphi}_t(s)\|^2\right.\nonumber\\&\qquad\left.+\frac{2}{\nu}\sqrt{\frac{2}{\lambda_1}} \|\nabla\widehat{\varphi}(s)\|_{\mathbb{L}^4}^2\|\J\|_{\mathrm{L}^1}^2+\frac{1296}{C_0^6}\|\widehat{\u}(s)\|_{\mathbb{L}^4}^4\|\J\|_{\mathrm{L}^1}^4\right)\|\psi_n(s)\|^2 \d s.
	\end{align}
An application of Gronwall's inequality in (\ref{l47}) yields 
	\begin{align}\label{l48}
	&\|\w_n(t)\|^2+\left\|\sqrt{a+\F''(\widehat{\varphi}(t))}\psi_n(t)\right\|^2\nonumber\\&\leq \left(\|\w_0\|^2+\left\|\sqrt{a+\F''(\widehat{\varphi}(0))}\psi_{n,0}\right\|^2+\frac{6C}{\nu}\int_0^t\|\widetilde{\h}(s)\|^2\d s+\frac{6}{\nu}\int_0^t\|\U(s)\|^2\d s\right)\nonumber\\&\quad \times \exp\left(\int_0^t\left[\frac{6}{\nu}\|\nabla\widehat{\u}(s)\|^2+\left(\frac{9}{\nu}+\frac{75}{2\nu C_0^2}\left(\|\nabla a\|_{\mathbb{L}^{\infty}}^4+2\|\nabla \J\|_{\mathbb{L}^1}^4\right)\right)\|\widehat{\varphi}(s)\|_{\mathrm{L}^4}^4\right]\d s\right)\nonumber\\&\quad \times \exp\left[\left(\frac{3C_0}{10}+\frac{3}{4}\|\nabla a\|_{\mathbb{L}^{\infty}}^2+\frac{9}{4}\|\nabla\J\|_{\mathbb{L}^1}^2\right)T\right]\nonumber\\&\quad\times \exp\left(\int_0^t\left(\frac{54}{C_0^2}\|\widehat{\u}(s)\|_{\mathbb{L}^4}^4+ \frac{729}{16C_0^2}\|\F'''(\widehat{\varphi}(s))\|_{\mathrm{L}^{\infty}}^4\|\nabla\widehat{\varphi}(s)\|_{\mathbb{L}^4}^4\right.\right.\nonumber\\&\qquad\left.\left.+\frac{24}{C_0^2}\|\F'''(\widehat{\varphi}(s))\|_{\mathrm{L}^{\infty}}^2\|\widehat{\varphi}_t(s)\|^2+\frac{2}{\nu}\sqrt{\frac{2}{\lambda_1}} \|\nabla\widehat{\varphi}(s)\|_{\mathbb{L}^4}^2\|\J\|_{\mathrm{L}^1}^2+\frac{1296}{C_0^6}\|\widehat{\u}(s)\|_{\mathbb{L}^4}^4\|\J\|_{\mathrm{L}^1}^4\right)\d s\right),
	\end{align}
	for all $t\in[0,T]$. Note that the right hand side of the inequality (\ref{l48}) is finite, since $(\widehat{\u},\widehat{\varphi})$ is a unique strong solution of the  uncontrolled system (\ref{nonlin phi})-(\ref{initial conditions}). Using the Assumption \ref{prop of F and J} (2), we get 
	\begin{equation}\label{l49}
	\|\w_n(t)\|^2+C_0\|\psi_n(t)\|^2\leq C,
	\end{equation}
	since we have 
	\begin{equation*}
	\left\|\sqrt{a+\F''(\widehat{\varphi}(0))}\psi_n(0)\right\|^2\leq \left(\|a\|_{\mathrm{L}^{\infty}}+\|\F''(\widehat{\varphi}_0)\|_{\mathrm{L}^{\infty}}\right)\|\psi_{n,0}\|^2\leq C(\|a\|_{\mathrm{L}^{\infty}},\|\widehat{\varphi}_0\|_{\mathrm{L}^{\infty}})\|\psi_0\|^2.
	\end{equation*}
	Substituting \eqref{l49} in \eqref{l47} we obtain 
	\begin{equation}\label{l50}
	\int_0^t\|\nabla\w_n(s)\|^2\d s+\int_0^t\|\nabla\psi_n(s)\|^2\d s \leq C.
	\end{equation}
	Thus, from the above estimates \eqref{l49} and \eqref{l50}, using the Banach-Alaglou theorem, we can extract a subsequence, still denoted by $(\w_{n},\psi_{n})$ such that 
	\begin{equation*}
	\left\{
	\begin{aligned}
	\w_{n}&\xrightharpoonup{w^*}\w \ \text{ in }\ \mathrm{L}^{\infty}(0,T;\G_{\text{div}}),\\
	\w_{n}&\xrightharpoonup{w}\w \ \text{ in }\ \mathrm{L}^{2}(0,T;\V_{\text{div}}),\\
	\psi_{n}&\xrightharpoonup{w^*}\psi \ \text{ in }\ \mathrm{L}^{\infty}(0,T;\mathrm{H}),\\
	\psi_{n}&\xrightharpoonup{w}\psi \ \text{ in }\ \mathrm{L}^{2}(0,T;\mathrm{V}),
	\end{aligned}
	\right.
	\end{equation*}
	as $n\to\infty$. From the above convergences, it is immediate that $(\w_{n})_t\xrightharpoonup{w}\w_t \ \text{ in }\ \mathrm{L}^{2}(0,T;\V_{\text{div}}')$ and $(\psi_{n})_t\xrightharpoonup{w}\psi_t \ \text{ in }\ \mathrm{L}^{2}(0,T;\mathrm{V}')$.  Thus, we can guarantee the existence of a weak solution $(\w,\psi)$ to the linearized system \eqref{lin w}-\eqref{lin initial conditions} in $(\mathrm{L}^{\infty}(0,T;\G_{\text{div}}) \cap \mathrm{L}^2(0,T;\V_{\text{div}}))\times(\mathrm{L}^{\infty}(0,T;\mathrm{H})\cap\mathrm{L}^2(0,T;\mathrm{V}))$. Also, by the Aubin-Lions  compactness theorem,  we have $\w_{n}\to\w \ \text{ in }\ \mathrm{L}^{2}(0,T;\G_{\text{div}}),$
	$\psi_{n}\to\psi \ \text{ in }\ \mathrm{L}^{2}(0,T;\mathrm{H})$, and  $\w\in\C([0,T];\G_{\text{div}})$ and $\psi\in\C([0,T];\mathrm{H})$. As in the case of Navier-Stokes equations (see \cite{Te}), we get the pressure  $\widetilde{\uppi}\in\mathrm{L}^2(0,T;\mathrm{L}^2_{(0)}(\Omega))$, where $\mathrm{L}_{(0)}^2(\Omega):=\left\{f\in\mathrm{L}^2(\Omega):\int_{\Omega}f(x)\d x =0\right\}$. Finally, we have 
	$$(\w,\psi)\in(\C([0,T];\G_{\text{div}})\cap\mathrm{L}^2(0,T;\V_{\text{div}}))\times (\C([0,T];\mathrm{H})\cap\mathrm{L}^2(0,T;\mathrm{V})),$$
and the initial datum $\w(0)=\w_0\in\G_{\text{div}}$ and $\psi(0)=\psi_0\in\mathrm{H}$ are satisfied in the weak sense, since $\w$ and $\psi$ are right continuous at $0$. The uniqueness of the weak solution follows easily as the system \eqref{lin w}-\eqref{lin initial conditions} is linear. 
\end{proof} 
	\subsection{The adjoint system} In this subsection, we formally derive the adjoint system corresponding to the problem \eqref{control problem}. Let us take $\h=\mathbf{0}$ in \eqref{nonlin phi}-\eqref{nonlin u}  and define
	\begin{equation}\label{n1n2}
	\left\{
	\begin{aligned} 
	\mathscr{N}_1(\u,\varphi, \U) &:= \nu \Delta \u - (\u \cdot \nabla)\u - \nabla \widetilde{\uppi } -(\J\ast \varphi) \nabla \varphi -\nabla a\frac{\varphi^2}{2} +\U,  \\
	\mathscr{N}_2(\u,\varphi) &:= -\u\cdot \nabla \varphi + \Delta (a\varphi -\J\ast \varphi + \F'(\varphi)),
	\end{aligned}
	\right.
	\end{equation}
	where $\widetilde{\uppi} = \uppi -\left( \F(\varphi) + a\frac{\varphi^2}{2}\right)$.
	Then the system \eqref{nonlin phi}-\eqref{nonlin u} can be written as
	$$(\partial_t \u,\partial_t \varphi) = (\mathscr{N}_1(\u,\varphi, \U), \mathscr{N}_2(\u,\varphi)).$$
	It is well known from the control theory literature that in order to get the first order necessary conditions for the existence of an optimal control to the Problem \eqref{control problem}, we need the adjoint equations corresponding to the system \eqref{nonlin phi}-\eqref{initial conditions}. With this motivation, we define the \emph{augmented cost functional} $\widetilde{\mathcal{J}}$ associated with the cost functional $\mathcal{J}$ defined in (\ref{cost}) by
	$$\widetilde{\mathcal{J}}(\u,\varphi,\U,\p,\eta) = \mathcal{J}(\u,\varphi,\U) + \int_0^T \langle \p,\partial_t \u - \mathscr{N}_1(\u,\varphi, \U) \rangle \d t+ \int_0^T \langle \eta, \partial_t \varphi - \mathscr{N}_2(\u,\varphi) \rangle \d t, $$
	where $\p$ and $\eta$ denote the adjoint variables to $\u$ and $\varphi$, respectively. 
	Before establishing the Pontryagin maximum principle, we derive the adjoint equations formally by differentiating the augmented cost functional $\widetilde{\mathcal{J}}$ in the G\^ateaux  sense with respect to the  variables $(\u,\varphi,\U)$ and get the following system. Note that differentiating $\widetilde{\mathcal{J}}$ with respect to the adjoint variables recovers the original system. 
	
	\begin{equation}
	\left\{
	\begin{aligned}
	\mathcal{J}_{\u} + \int_0^T \langle \p,  \partial_t \u -[\partial_{\u} \mathscr{N}_1]  \rangle \d t+ \int_0^T  \langle \eta, \partial_t \varphi - [\partial_{\u} \mathscr{N}_2] \rangle \d t = 0, \\  
	\mathcal{J}_{\varphi} + \int_0^T \langle \p,  \partial_t {\u} -[\partial_\varphi \mathscr{N}_1]  \rangle \d t+ \int_0^T  \langle \eta, \partial_t \varphi - [\partial_\varphi \mathscr{N}_2] \rangle \d t = 0,  \\  
	\mathcal{J}_{\U} + \int_0^T \langle \p,  \partial_t \u -[\partial_{\U} \mathscr{N}_1]  \rangle \d t+ \int_0^T  \langle \eta, \partial_t \varphi - [\partial_{\U} \mathscr{N}_2] \rangle \d t = 0.
	\end{aligned}
	\right.
	\end{equation}
	Therefore the adjoint variables $(\p,\eta)$ satisfy the following adjoint system (see \cite{BDM} for more details): 
	\begin{equation}\label{adj}
	\left\{
	\begin{aligned}
	-\p_t- \nu \Delta \p +(\p \cdot \nabla^T)\u - (\u \cdot \nabla)\p - \eta\nabla \varphi +\nabla q &= -\Delta(\u-\u_d),\ \text{in } \  \Omega \times (0,T)\\
	-\eta_t   +\J \ast (\p \cdot \nabla \varphi) - (\nabla{\J} \ast  \varphi)\cdot \p+ \nabla a\cdot \p \varphi 
	- \u \cdot\nabla \eta - a \Delta \eta&\\ + \J\ast \Delta \eta - \F''(\varphi)\Delta \eta &= (\varphi - \varphi_d),\ \text{in } \  \Omega \times (0,T) \\ 
	\text{div }\p&=0,\ \text{in } \  \Omega \times (0,T)\\
	 \p\big|_{\partial\Omega}=\frac{\partial \eta}{\partial \mathbf{n}}\Big|_{\partial \Omega}&=\mathbf{0} ,\ \text{on } \  \partial\Omega \times (0,T)\\ \p(T,\cdot)&=\u(T)-\u_f,\ \text{in } \  \partial\Omega \\ \eta(T,\cdot)&=\varphi(T)-\varphi_f,\ \text{in } \  \partial\Omega.
	\end{aligned}
	\right.
	\end{equation}
	Note that $q:\Omega\to\R$ is also an unknown. Let us assume that 
	\begin{align}\label{fes}
	\F \in\C^3(\mathbb{R}),\ a \in\mathrm{H}^2(\Omega),
	\end{align}
	and the initial data  
	\begin{align}\label{initial}
	\u_0\in\V_{\text{div}}\text{ and }\varphi_0\in \mathrm{H}^2(\Omega).
	\end{align} By the embedding of $\mathrm{V}$ and $\mathrm{L}^{\infty}(\Omega)$ in $\mathrm{H}^{2}(\Omega)$, the initial concentration  $\varphi_0\in \mathrm{H}^2(\Omega)$ implies $\varphi_0\in \mathrm{V}\cap\mathrm{L}^{\infty}(\Omega)$. From now onwards, along with the Assumption \ref{prop of F and J}, we also assume (\ref{fes}). The following theorem gives the unique solvability results for the system (\ref{adj}). 
\begin{theorem}[Existence and Uniqueness of Adjoint System]\label{adjoint}
	Let the Assumption \ref{prop of F and J}, \eqref{fes}, \eqref{initial} along  with $\J \in \W^{2,1}(\R^2,\R)$ be satisfied. Also let us assume that $\p_T \in \G_{\text{div}}$ and $\eta_T \in \mathrm{V}$ and $(\u,\varphi)$ be a unique strong solution of the nonlinear system \eqref{nonlin phi}-\eqref{initial conditions}. Then, there exists \emph{a unique weak solution} of the system \eqref{adj} satisfying
	\begin{align}\label{space}
	(\p,\eta)\in(\mathrm{L}^{\infty}(0,T;\G_{\text{div}})\cap\mathrm{L}^2(0,T;\V_{\text{div}}))\times (\mathrm{L}^{\infty}(0,T;\mathrm{V})\cap \mathrm{L}^2(0,T;\mathrm{H}^2)),\end{align} 
	and for all $\v \in \V$ and $\zeta \in \mathrm{H}$ and for almost all $t \in (0,T)$, we have
	\begin{equation}\label{0.1}
	\left\{
	\begin{aligned}
	- \langle \p_t, \v \rangle + \nu(\nabla \p,\nabla \v) -((\p \cdot \nabla^T)\u,\v) + ((\u \cdot \nabla)\p,\v) - (\eta \nabla \varphi,\v) &= -(\Delta(\u-\u_d),\v), \\ 
	-(\eta_t,\zeta) + (\J \ast (\p \cdot \nabla \varphi),\zeta) - ((\nabla{\J} \ast  \varphi)\cdot \p,\zeta) + (\nabla a\cdot \p \varphi 
	,\zeta)\qquad \ \, \, \\  - (\u \cdot\nabla \eta,\zeta) - (a \Delta \eta,\zeta)  +(\J\ast \Delta \eta,\zeta) - (\F''(\varphi)\Delta \eta,\zeta) &= (\varphi - \varphi_d,\zeta),  
	\end{aligned}	
	\right.
	\end{equation}
	where $\p(T)=\p_T\in\G_{\text{div}}$,  $\eta(T)=\eta_T\in\mathrm{V}$ are satisfied in the weak sense. 
\end{theorem}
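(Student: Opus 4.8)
The plan is to convert the backward-in-time system \eqref{adj} into a forward parabolic problem and then mimic the Faedo--Galerkin scheme of Theorem \ref{linearized}. First I would introduce the time reversal $\tau=T-t$, $\widetilde\p(\tau)=\p(T-\tau)$, $\widetilde\eta(\tau)=\eta(T-\tau)$, which turns the terminal conditions $\p(T)=\p_T\in\G_{\text{div}}$, $\eta(T)=\eta_T\in\mathrm{V}$ into initial conditions at $\tau=0$ and replaces $-\p_t,-\eta_t$ by $+\widetilde\p_\tau,+\widetilde\eta_\tau$, so that the diffusion operators $-\nu\Delta$ and $-(a+\F''(\varphi))\Delta+\J\ast\Delta\,\cdot$ carry the correct dissipative sign. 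Using the Stokes eigenfunctions $\{\me_j\}$ for $\V_{\text{div}}$ and the Neumann eigenfunctions $\{\xi_j\}$ of $\mathcal{B}_1=-\Delta+\I$ for $\mathrm{V}$, I would set up a finite-dimensional system for $(\p_n,\eta_n)$; since it is linear in the unknowns with coefficients frozen at the given strong solution $(\u,\varphi)$, it is solved locally in time by the Picard--Lindel\"of theorem, and global solvability will follow from the a priori bounds below.

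The first a priori estimate is obtained by testing the $\p_n$-equation with $\p_n$ and the $\eta_n$-equation with $\eta_n$. The transport terms $b(\u,\p_n,\p_n)=0$ and $(\u\cdot\nabla\eta_n,\eta_n)=0$ vanish by \eqref{best} and $\text{div}\,\u=0$, while the pressure term drops by $\text{div}\,\p_n=0$. The transpose-gradient term $((\p_n\cdot\nabla^T)\u,\p_n)$ is controlled by $\|\nabla\u\|\,\|\p_n\|_{\mathbb{L}^4}^2\leq\sqrt2\|\nabla\u\|\,\|\p_n\|\,\|\nabla\p_n\|$ (Ladyzhenskaya), and the coupling $(\eta_n\nabla\varphi,\p_n)$ together with $-(\Delta(\u-\u_d),\p_n)=(\nabla(\u-\u_d),\nabla\p_n)$ by H\"older--Ladyzhenskaya--Young, using $\nabla\varphi\in\mathrm{L}^\infty(0,T;\mathbb{L}^4)$ (from $\varphi\in\mathrm{L}^\infty(0,T;\mathrm{H}^2)$) and $\u\in\mathrm{L}^2(0,T;\mathbb{H}^2)$. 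In the $\eta_n$-equation the principal part $-((a+\F''(\varphi))\Delta\eta_n,\eta_n)$ integrates by parts, via the Neumann condition, to $\int_\Omega(a+\F''(\varphi))|\nabla\eta_n|^2+\text{l.o.t.}\geq C_0\|\nabla\eta_n\|^2+\text{l.o.t.}$ by Assumption \ref{prop of F and J}(2); the nonlocal term is rewritten as $(\J\ast\Delta\eta_n,\eta_n)=((\nabla\J)\ast\nabla\eta_n,\eta_n)$ and bounded by $\|\nabla\J\|_{\mathbb{L}^1}\|\nabla\eta_n\|\,\|\eta_n\|$, while the remaining Korteweg-type terms $\J\ast(\p_n\cdot\nabla\varphi)$, $(\nabla\J\ast\varphi)\cdot\p_n$, $\nabla a\cdot\p_n\varphi$ are estimated by Young's convolution inequality together with $\varphi\in\mathrm{L}^\infty$ and $\nabla\varphi\in\mathbb{L}^4$. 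Adding the two identities, absorbing the gradient terms and invoking Gronwall's inequality yields, as in \eqref{l48}--\eqref{l50}, the uniform bounds $\p_n\in\mathrm{L}^\infty(0,T;\G_{\text{div}})\cap\mathrm{L}^2(0,T;\V_{\text{div}})$ and $\eta_n\in\mathrm{L}^\infty(0,T;\mathrm{H})\cap\mathrm{L}^2(0,T;\mathrm{V})$.

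The main difficulty, and the step that forces the hypotheses $\J\in\mathrm{W}^{2,1}(\R^2,\R)$, $a\in\mathrm{H}^2(\Omega)$ and $\F\in\C^3(\R)$, is upgrading $\eta$ to $\mathrm{L}^\infty(0,T;\mathrm{V})\cap\mathrm{L}^2(0,T;\mathrm{H}^2)$. For this I would test the $\eta_n$-equation with $-\Delta\eta_n$, which is admissible since $-\Delta\eta_n$ lies in the Neumann span $\mathrm{V}_n$. The time term gives $\frac12\frac{\d}{\d\tau}\|\nabla\eta_n\|^2$ and the principal part $-(a+\F''(\varphi))\Delta\eta_n$ pairs with $-\Delta\eta_n$ to produce directly the coercive quantity $\int_\Omega(a+\F''(\varphi))|\Delta\eta_n|^2\geq C_0\|\Delta\eta_n\|^2$. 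The nonlocal term is handled through the identity $\J\ast\Delta\eta_n=(\nabla\J)\ast\nabla\eta_n$ (valid because of the homogeneous Neumann condition on $\eta_n$), whence $|(\J\ast\Delta\eta_n,\Delta\eta_n)|\leq\|\nabla\J\|_{\mathbb{L}^1}\|\nabla\eta_n\|\,\|\Delta\eta_n\|$ is absorbed into $C_0\|\Delta\eta_n\|^2$. The transport term $(\u\cdot\nabla\eta_n,\Delta\eta_n)$ and the lower-order forcings $\J\ast(\p_n\cdot\nabla\varphi)$, $(\nabla\J\ast\varphi)\cdot\p_n$, $\nabla a\cdot\p_n\varphi$ and $\varphi-\varphi_d$, once paired with $\Delta\eta_n$, are bounded in $\mathrm{L}^2$ by \eqref{gu}--\eqref{gu1}, Agmon's inequality \eqref{agm} and the strong-solution regularity $\u\in\mathrm{L}^2(0,T;\mathbb{H}^2)$, $\varphi\in\mathrm{L}^\infty(0,T;\mathrm{H}^2)$ (here $\J\in\mathrm{W}^{2,1}$ enters through \eqref{0.24}), with the resulting powers of $\|\Delta\eta_n\|$ absorbed by Young's inequality. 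Since the lower-order norms of $\eta_n$ and the full norm of $\p_n$ are already controlled, Gronwall's inequality closes the estimate and yields the uniform bound on $\eta_n$ in $\mathrm{L}^\infty(0,T;\mathrm{V})\cap\mathrm{L}^2(0,T;\mathrm{H}^2)$. I expect this $\mathrm{H}^2$-estimate for $\eta$, with its interplay between the variable coefficient $a+\F''(\varphi)$, the nonlocal operator and the coupling to $\p$, to be the principal obstacle.

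Finally, by the Banach--Alaoglu theorem I would extract a subsequence with $\p_n\xrightharpoonup{w^*}\p$, $\eta_n\xrightharpoonup{w^*}\eta$ and weak limits of the gradients and of $\Delta\eta_n$ in the respective $\mathrm{L}^2$ spaces; since the system is linear in $(\p,\eta)$ with fixed coefficients (for instance $\F''(\varphi)\Delta\eta_n\rightharpoonup\F''(\varphi)\Delta\eta$ because $\F''(\varphi)\in\mathrm{L}^\infty$ is frozen), these convergences suffice to pass to the limit in the Galerkin formulation and obtain a weak solution satisfying \eqref{0.1} with the regularity \eqref{space}, the pressure $q$ being recovered by de Rham's argument as in \cite{Te}. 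The equations then give $\p_\tau\in\mathrm{L}^2(0,T;\V_{\text{div}}')$ and $\eta_\tau\in\mathrm{L}^2(0,T;\mathrm{H})$, so that by the Aubin--Lions lemma $\p\in\C([0,T];\G_{\text{div}})$ and $\eta\in\C([0,T];\mathrm{H})$, and, after undoing the time reversal, the terminal data $\p_T,\eta_T$ are attained. Uniqueness is immediate: the difference of two solutions solves the homogeneous system, and the basic energy estimate above forces it to vanish.
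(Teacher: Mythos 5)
Your proposal follows essentially the same route as the paper: a Faedo--Galerkin scheme in the Stokes/Neumann eigenbases, a first energy estimate testing with $(\p_n,\eta_n)$ using the coercivity $a+\F''(\varphi)\geq C_0$ and the strong-solution regularity of $(\u,\varphi)$ (where the $\F'''(\varphi)\nabla\varphi$ term forces $\F\in\C^3$), then the higher estimate testing with $-\Delta\eta_n$ with the same treatment of the nonlocal term $(\J\ast\Delta\eta,\Delta\eta)$ via one integration by parts, followed by Gronwall, weak-limit passage, and uniqueness by linearity; your explicit time reversal $\tau=T-t$ is just a cosmetic substitute for the paper's direct integration from $t$ to $T$. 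The argument is correct and matches the paper's proof in all essential respects.
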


	\begin{proof}
	In order to prove the existence and uniqueness of the adjoint system \eqref{adj}, we  use a Faedo-Galerkin approximation scheme and show that these approximations converge in the space given in (\ref{space}) to the solution of the system \eqref{adj}. We consider the finite dimensional approximation $\p_n$ and $\eta_n$ of $\p$ and $\eta$ respectively in the finite dimensional subspace $\V_n$ and $\mathrm{V}_n$. The rest of the approximation technique can be done similarly as in Theorem \ref{linearized} and we omit it in this context. 
	
	First we find a-priori energy estimates satisfied by $(\p,\eta)$.  Let us take the test function in the first equation \eqref{0.1} as $\p$ to obtain 
	\begin{align}\label{0.2}
	-\frac{1}{2}\frac{\d}{\d t}\|\p\|^2+\nu\|\nabla\p\|^2&=((\p\cdot\nabla^T)\u,\p)+(\eta\nabla \varphi ,\p)-(\Delta(\u-\u_d),\p)=:I_1+I_2+I_3,
	\end{align}
	where we used $((\u\cdot\nabla)\p,\p)=0$, since $\p$ is divergence free. Let us estimate  $I_1$ using H\"older's, Ladyzhenskaya and Young's inequalities as 
	\begin{align}\label{0.6}
	|I_1|\leq \|\p\|_{\mathbb{L}^4}^2\|\nabla\u\|\leq \sqrt{2}\|\p\|\|\nabla\p\|\nabla\u\|\leq \frac{\nu}{12}\|\nabla\p\|^2+\frac{6}{\nu}\|\nabla\u\|^2\|\p\|^2. 
	\end{align}
	Using an integration by parts, the divergence free condition, H\"older, Ladyzhenskaya and Young's inequalities, we estimate $I_2$ as 
	\begin{align}
	|I_2|=|(\nabla\eta \cdot \p,\varphi)|&\leq \|\nabla \eta\|\|\varphi\|_{\mathrm{L}^4}\|\p\|_{\mathbb{L}^4}\leq \frac{1}{2}\|\nabla\eta\|^2+\frac{1}{2}\|\varphi\|_{\mathrm{L}^4}^2\|\p\|\|\nabla\p\|\nonumber\\&\leq \frac{1}{2}\|\nabla\eta\|^2+\frac{\nu}{12}\|\nabla\p\|^2+\frac{3}{4\nu}\|\varphi\|_{\mathrm{L}^4}^4\|\p\|^2.
	\end{align}
	Let us use the Cauchy-Schwarz, Poincar\'e and Young's inequalities to estimate $I_3$ as 
	\begin{align}\label{0.8}
	|I_3|\leq \|\nabla(\u-\u_d)\|\|\nabla\p\|\leq \frac{\nu}{12}\|\nabla\p\|^2+\frac{3}{\nu}\|\nabla(\u-\u_d)\|^2. 
	\end{align}
	Using \eqref{0.6}-\eqref{0.8} in \eqref{0.2} we get,
	\begin{align}\label{0.9}
	-\frac{1}{2}\frac{\d}{\d t}\|\p\|^2+\nu\|\nabla\p\|^2 &\leq \frac{\nu}{4}\|\nabla\p\|^2 + \left( \frac{6}{\nu}\|\nabla\u\|^2 +\frac{3}{4\nu}\|\varphi\|_{\mathrm{L}^4}^4  \right)\|\p\|^2 +\frac{1}{2}\|\nabla\eta\|^2 \no\\&\qquad+\frac{3}{\nu}\|\nabla(\u-\u_d)\|^2.
	\end{align}
	We now choose the test function in the second equation in \eqref{0.1} as $\eta$ to find 
	\begin{align}\label{0.9a}
	-\frac{1}{2}\frac{\d}{\d t}\|\eta\|^2-((a+\F''(\varphi))\Delta \eta,\eta)&=-(\J*(\p\cdot\nabla\varphi),\eta)+ ((\nabla{\J} \ast  \varphi)\cdot \p,\eta) 
	\nonumber\\&\quad  -( \nabla a\cdot \p \varphi ,\eta)-( \J\ast \Delta \eta,\eta) +(\varphi - \varphi_d,\eta),
	\end{align}
	where we used $(\u \cdot\nabla \eta,\eta)=0$, since $\u$ is divergence free. Since $\frac{\partial\eta}{\partial\mathbf{n}}\big|_{\Omega}=0$, an integration by parts yields 
	\begin{align}
	&-((a+\F''(\varphi))\Delta \eta,\eta)\no\\ &=-\sum_{i=1}^2\int_{\Omega}(a(x)+\F''(\varphi(x)))\frac{\partial^2\eta(x)}{\partial x_i^2}\eta(x)\d x\no\\&=\sum_{i=1}^2\int_{\Omega}(a(x)+\F''(\varphi(x)))\left(\frac{\partial\eta(x)}{\partial x_i}\right)^2\d x+\sum_{i=1}^2\int_{\Omega}\frac{\partial}{\partial x_i}(a+\F''(\varphi(x)))\frac{\partial\eta(x)}{\partial x_i}\eta(x)\d x\no\\&\geq C_0\sum_{i=1}^2\int_{\Omega}\left(\frac{\partial\eta(x)}{\partial x_i}\right)^2\d  x+\sum_{i=1}^2\int_{\Omega}\frac{\partial}{\partial x_i}(a+\F''(\varphi(x)))\frac{\partial\eta(x)}{\partial x_i}\eta(x)\d x\no\\&=C_0\|\nabla\eta\|^2+(\nabla(a+\F''(\varphi))\cdot\nabla\eta,\eta).
	\end{align}
	Thus, from (\ref{0.9a}), we obtain 
	\begin{align}\label{0.11}
	-\frac{1}{2}\frac{\d}{\d t}\|\eta\|^2+C_0\|\nabla\eta\|^2&\leq-(\J*(\p\cdot\nabla\varphi),\eta)+ ((\nabla{\J} \ast  \varphi)\cdot \p,\eta) -( \nabla a\cdot \p \varphi ,\eta)
	\nonumber\\&\quad  -( \J\ast \Delta \eta,\eta) +(\varphi - \varphi_d,\eta)-(\nabla a\cdot\nabla\eta,\eta)-(\nabla\F''(\varphi)\cdot\nabla\eta,\eta)\no\\&=:\sum_{k=4}^{10}I_k.
	\end{align}
	Note that the properties of $\J(\cdot)$, an integration by parts, H\"older, Ladyzhenskaya and Young's inequalities yields 
	\begin{align}\label{0.12}
	|I_4|&=|((\J*\eta)\nabla\varphi,\p)|=|(\varphi(\nabla\J*\eta),\p)|\leq \|\nabla\J\|_{\mathbb{L}^1}\|\eta\|\|\p\|_{\mathbb{L}^4}\|\varphi\|_{\mathrm{L}^4}\nonumber\\&\leq \frac{1}{2}\|\nabla\J\|_{\mathbb{L}^1}^2\|\eta\|^2+\frac{1}{2}\|\p\|_{\mathbb{L}^4}^2\|\varphi\|_{\mathrm{L}^4}^2\leq \frac{1}{2}\|\nabla\J\|_{\mathbb{L}^1}^2\|\eta\|^2+\frac{1}{\sqrt{2}}\|\p\|\|\nabla\p\|\|\varphi\|_{\mathrm{L}^4}^2\nonumber\\&\leq \frac{1}{2}\|\nabla\J\|_{\mathbb{L}^1}^2\|\eta\|^2+\frac{\nu}{12}\|\nabla\p\|^2+\frac{3}{2\nu}\|\p\|^2\|\varphi\|_{\mathrm{L}^4}^4.
	\end{align}
	In a similar way, we estimate  $I_5$ as
	\begin{align}
	|I_5|\leq \|\nabla\J\|_{\mathbb{L}^1}\|\varphi\|_{\mathrm{L}^4}\|\p\|_{\mathbb{L}^4}\|\eta\|\leq \frac{1}{2}\|\nabla\J\|_{\mathbb{L}^1}^2\|\eta\|^2+\frac{\nu}{12}\|\nabla\p\|^2+\frac{3}{2\nu}\|\p\|^2\|\varphi\|_{\mathrm{L}^4}^4.
	\end{align}
	Once again using H\"older's and Young's inequalities to estimate $I_6$ as 
	\begin{align}
	|I_6|\leq \|\nabla a\|_{\mathbb{L}^{\infty}}\|\varphi\|_{\mathrm{L}^4}\|\p\|_{\mathbb{L}^4}\|\eta\|\leq \frac{1}{2}\|\nabla a\|_{\mathbb{L}^{\infty}}^2\|\eta\|^2+\frac{\nu}{12}\|\nabla\p\|^2+\frac{3}{2\nu}\|\p\|^2\|\varphi\|_{\mathrm{L}^4}^4.
	\end{align}
	We use the properties of $\J(\cdot)$, an integration by parts, Cauchy-Schwarz, H\"older and Young's inequality to estimate $I_7$ as 
	\begin{align}
	|I_7|&=|(\J\ast \eta,\Delta\eta)|=|(\nabla\J*\eta,\nabla\eta)|\leq \|\nabla\J\|_{\mathbb{L}^{1}}\|\eta\|\|\nabla\eta\|\nonumber\\&\leq \frac{C_0}{10}\|\nabla\eta\|^2+\frac{5}{2C_0}\|\nabla\J\|_{\mathbb{L}^{1}}^2\|\eta\|^2.
	\end{align}
	Using the  Cauchy-Schwarz and Young's inequalities, we estimate $I_8$ as 
	\begin{align}
	|I_8|\leq \|\varphi-\varphi_d\|_{\mathrm{V}'}\|\eta\|_{\mathrm{V}}\leq \frac{5C}{2C_0}\|\varphi-\varphi_d\|^2+\frac{C_0}{10}\|\eta\|^2+\frac{C_0}{10}\|\nabla\eta\|^2.
	\end{align}
	Similarly, we have 
	\begin{align}
	|I_{9}|\leq \|\nabla a\|_{\mathbb{L}^{\infty}}\|\nabla\eta\|\|\eta\|\leq \frac{C_0}{10}\|\nabla\eta\|^2+\frac{5}{2C_0}\|\nabla a\|_{\mathbb{L}^{\infty}}^2\|\eta\|^2.
	\end{align}
	The most difficult term, which is not possible to estimate using the weak solution regularity of $(\u,\varphi)$ is $(\nabla\F''(\varphi)\cdot\nabla\eta,\eta)$. Thus we need the solution $(\u,\varphi)$ to be a strong solution. Using  H\"older's,  Young's and Ladyzhenskaya inequalities, it is immediate that 
	\begin{align}
	\label{0.18}
	|I_{10}|&=|(\F'''(\varphi)\nabla\varphi\cdot\nabla\eta,\eta)|\leq \sup_{x\in \Omega}|\F'''(\varphi)|\|\nabla\varphi\|_{\mathbb{L}^4}\|\nabla\eta\|\|\eta\|_{\mathrm{L}^4}\nonumber\\&\leq 2^{1/4}\sup_{x\in \Omega}|\F'''(\varphi)|\|\nabla\varphi\|_{\mathbb{L}^4}\|\nabla\eta\|^{3/2}\|\eta\|^{1/2} \nonumber\\&\leq  \frac{C_0}{10}\|\nabla\eta\|^2+C(C_0)\|\F'''(\varphi)\|_{\mathrm{L}^{\infty}}^4\|\nabla\varphi\|_{\mathbb{L}^4}^4\|\eta\|^2.
	\end{align}
	Let us now combine (\ref{0.12})-(\ref{0.18}) and substitute in (\ref{0.11}), and adding with \eqref{0.9} to find
	\begin{align}
	&-\frac{1}{2}\frac{\d}{\d t}(\|\p\|^2+\|\eta\|^2)+\frac{\nu}{2}\|\nabla\p\|^2+\frac{C_0}{2}\|\nabla\eta\|^2\nonumber\\&\quad \leq \left[\frac{6}{\nu}\|\nabla\u\|^2+\frac{3}{2\nu}\left(\frac{25}{C_0^2}+3\right)\|\varphi\|_{\mathbb{L}^4}^4\right]\|\p\|^2+\frac{3}{\nu}\|\nabla(\u-\u_d)\|^2+\frac{5C}{2C_0}\|\varphi-\varphi_d\|^2\nonumber\\&\qquad+\left[\frac{1}{2}\|\nabla\J\|_{\mathbb{L}^1}^2+\frac{1}{2}\left(1+\frac{5}{C_0}\right)\left(\|\nabla a\|_{\mathbb{L}^{\infty}}^2+\|\nabla\J\|_{\mathbb{L}^{1}}^2\right)+\frac{C_0}{10}\right]\|\eta\|^2\nonumber\\&\qquad +C(C_0)\|\F'''(\varphi)\|_{\mathrm{L}^{\infty}}^4\|\nabla\varphi\|_{\mathbb{L}^4}^4\|\eta\|^2.
	\end{align}
	Integrating the above inequality from $t$ to $T$, we get 
	\begin{align}\label{0.19}
	&\|\p(t)\|^2+\|\eta(t)\|^2+\nu\int_t^T\|\nabla\p(s)\|^2\d s+C_0\int_t^T\|\nabla\eta(s)\|^2\d s\nonumber\\&\quad \leq\|\p_T\|^2+\|\eta_T\|^2+ \int_t^T\left[\frac{12}{\nu}\|\nabla\u(s)\|^2+\frac{3}{\nu}\left(\frac{25}{C_0^2}+3\right)\|\varphi(s)\|_{\mathbb{L}^4}^4\right]\|\p(s)\|^2\d s\nonumber\\&\qquad+ \left[\|\nabla\J\|_{\mathbb{L}^1}^2+\left(1+\frac{5}{C_0}\right)\left(\|\nabla a\|_{\mathbb{L}^{\infty}}^2+\|\nabla\J\|_{\mathbb{L}^{1}}^2+\frac{C_0}{5}\right)\right]\int_t^T\|\eta(s)\|^2\d s\nonumber\\&\qquad +C(C_0)\int_t^T\|\F'''(\varphi(s))\|_{\mathrm{L}^{\infty}}^4\|\nabla\varphi(s)\|_{\mathbb{L}^4}^4\|\eta(s)\|^2\d s\nonumber\\&\qquad +\frac{6}{\nu}\int_t^T\|\nabla(\u(s)-\u_d(s))\|^2\d s+\frac{5C}{C_0}\int_t^T\|\varphi(s)-\varphi_d(s)\|^2\d s.
	\end{align}
	An application of  Gronwall's inequality in (\ref{0.19}) yields 
	\begin{align}\label{0.20}
	&\|\p(t)\|^2+\|\eta(t)\|^2\no\\&\quad\leq\left(\|\p_T\|^2+\|\eta_T\|^2+\frac{6}{\nu}\int_0^T\|\nabla(\u(t)-\u_d(t))\|^2\d t+\frac{5C}{C_0}\int_0^T\|\varphi(t)-\varphi_d(t)\|^2\d t\right)\nonumber\\&\quad \times \exp\left\{\left[\|\nabla\J\|_{\mathbb{L}^1}^2+\left(1+\frac{5}{C_0}\right)\left(\|\nabla a\|_{\mathbb{L}^{\infty}}^2+\|\nabla\J\|_{\mathbb{L}^{1}}^2\right)+\frac{C_0}{5}\right]T\right\}\nonumber\\&\quad \times\exp\left\{\int_0^T\left[\frac{12}{\nu}\|\nabla\u(t)\|^2+\frac{3}{\nu}\left(\frac{25}{C_0^2}+3\right)\|\varphi(t)\|_{\mathbb{L}^4}^4\right]\d t\right\}\nonumber\\&\quad \times \exp\left(C(C_0)\sup_{t\in[0,T]}\|\F'''(\varphi(t))\|_{\mathrm{L}^{\infty}}^4\int_t^T\|\nabla\varphi(t)\|_{\mathbb{L}^4}^4\d t\right),
	\end{align}
	for all $t\in[0,T]$. Since $(\u,\varphi)$ is the unique strong solution of the nonlinear system, the right hand side of the inequality (\ref{0.20}) is finite. Note that $\varphi\in\mathrm{L}^{\infty}((0,T)\times\Omega)\cap\mathrm{L}^{\infty}(0,T;\mathrm{W}^{1,p})\cap\mathrm{L}^2(0,T;\mathrm{H}^2(\Omega)),$ for $2\leq p<\infty$. Thus, we get $\p\in\mathrm{L}^{\infty}(0,T;\G_{\text{div}})$ and $\eta\in\mathrm{L}^{\infty}(0,T;\mathrm{H})$. Let us substitute (\ref{0.20}) in (\ref{0.19}) to obtain the regularity of $\p$ and $\eta$  namely, $\p\in\mathrm{L}^2(0,T;\V_{\text{div}})$ and $\eta\in\mathrm{L}^2(0,T;\mathrm{V})$. 
	
	To show that $\eta \in \mathrm{L}^{\infty}(0,T;\mathrm{V})\cap\mathrm{L}^2(0,T;\mathrm{H}^2)$, we take the test function in the second equation in \eqref{0.1} as $-\Delta\eta$ to find  
	\begin{align}\label{p11}
	&-\frac{1}{2}\frac{\d}{\d t}\|\nabla\eta\|^2+((a+\F''(\varphi))\Delta\eta,\Delta\eta)=( \J \ast (\p \cdot \nabla \varphi),\Delta\eta) - ((\nabla{\J} \ast  \varphi)\cdot \p,\Delta\eta)\nonumber \\  &\quad+ (\nabla a\cdot \p \varphi ,\Delta\eta)
	-( \u \cdot\nabla \eta,\Delta\eta) +( \J\ast \Delta \eta,\Delta\eta)+(\varphi - \varphi_d,\Delta\eta) =:\sum_{i=11}^{16}I_i.
	\end{align}
	We estimate $|I_{11}|$ using the H\"older, Ladyzhenskaya, Poincar\'e and Young's inequalities as
	\begin{align}\label{p13}
	|I_{11}|&\leq \|\J*\p\|_{\mathbb{L}^4}\|\nabla\varphi\|_{\mathbb{L}^4}\|\Delta\eta\|\leq\frac{C_0}{12}\|\Delta\eta\|^2+\frac{3}{C_0} \|\J\|_{\mathrm{L}^1}^2\|\p\|_{\mathbb{L}^4}^2\|\nabla\varphi\|_{\mathbb{L}^4}^2\nonumber\\&\leq \frac{C_0}{12}\|\Delta\eta\|^2+\frac{3\sqrt{2}}{C_0} \|\J\|_{\mathrm{L}^1}^2\|\p\|\|\nabla\p\|\|\nabla\varphi\|_{\mathbb{L}^4}^2\nonumber\\&\leq \frac{C_0}{12}\|\Delta\eta\|^2+ \frac{\nu}{12}\|\nabla\p\|^2 +\frac{36}{\nu C_0^2}\|\J\|_{\mathrm{L}^1}^4\|\nabla\varphi\|_{\mathbb{L}^4}^4\|\p\|^2,
	\end{align}
	where we also used the Young's inequality for convolutions.  Similarly, we estimate  $|I_{12}|$  as
	\begin{align}\label{p14}
	|I_{12}|&\leq \|\nabla{\J} \ast  \varphi\|_{\mathbb{L}^4}\|\p\|_{\mathbb{L}^4}\|\Delta\eta\|
	\leq\frac{C_0}{12}\|\Delta\eta\|^2+\frac{3}{C_0} \|\nabla\J\|_{\mathbb{L}^1}^2\|\varphi\|_{\mathrm{L}^4}^2\|\p\|_{\mathbb{L}^4}^2
	\nonumber\\&\leq \frac{C_0}{12}\|\Delta\eta\|^2+ \frac{\nu}{12}\|\nabla\p\|^2 +\frac{36}{\nu C_0^2}\|\nabla \J\|_{\mathrm{L}^1}^4\|\nabla\varphi\|_{\mathbb{L}^4}^4\|\p\|^2.
	\end{align}
	We use the  H\"older, Ladyzhenskaya, Poincar\'e and Young's inequalities to estimate $|I_{13}|$ as
	\begin{align}\label{p15}
	|I_{13}|&\leq \|\nabla a\|_{\mathbb{L}^{\infty}}\|\p\|_{\mathbb{L}^4}\|\varphi\|_{\mathrm{L}^4}\|\Delta\eta\|\leq \frac{C_0}{12}\|\Delta\eta\|^2+\frac{3}{C_0}\|\nabla a\|_{\mathbb{L}^{\infty}}^2\|\p\|_{\mathbb{L}^4}^2\|\varphi\|^2_{\mathrm{L}^4}\nonumber\\&\leq \frac{C_0}{12}\|\Delta\eta\|^2+ \frac{\nu}{12}\|\nabla\p\|^2 +\frac{36}{\nu C_0^2}\|\nabla a\|_{\mathbb{L}^{\infty}}^4\|\nabla\varphi\|_{\mathbb{L}^4}^4\|\p\|^2.
	\end{align}
	Using the H\"older, Agmon and Young's inequalities to estimate $|I_{14}|$ as 
	\begin{align}\label{p16}
	|I_{14}| \leq \|\u\|_{\mathbb{L}^{\infty}}\|\nabla\eta\|\|\Delta\eta\|\leq C\|\u\|_{\H^2}\|\nabla\eta\|\|\Delta\eta\|\leq \frac{C_0}{12}\|\Delta\eta\|^2+C(C_0)\|\u\|_{\mathbb{H}^2}^2\|\nabla\eta\|^2.
	\end{align}
	We use an integration by parts, Cauchy-Schwarz inequality, convolution inequality and Young's inequality  to estimate $|I_{15}|$ as 
	\begin{align}\label{p17}
	|I_{15}|&=\left|\sum_{i=1}^2\int_{\Omega}\left(\int_{\Omega}\J(x-y)\Delta\eta(y)\d y\right)\frac{\partial^2\eta(x)}{\partial x_i^2}\d x\right|\nonumber\\&=\left|\sum_{i=1}^2\int_{\Omega}\frac{\partial}{\partial x_i}\left(\int_{\Omega}\J(x-y)\Delta\eta(y)\d y\right)\frac{\partial\eta(x)}{\partial x_i}\d x\right|=|(\nabla\J*\Delta\eta,\nabla\eta)|\nonumber\\&\leq \|\nabla\J\|_{\mathbb{L}^1}\|\Delta\eta\|\|\nabla\eta\|\leq  \frac{C_0}{12}\|\Delta\eta\|^2+\frac{3}{C_0}\|\nabla\J\|_{\mathbb{L}^1}^2\|\nabla\eta\|^2.
	\end{align}
	Finally, $|I_{16}|$ can be estimated using the Cauchy-Schwarz and Young's inequalities as 
	\begin{align}\label{p18}
	|I_{16}|\leq \|\varphi-\varphi_d\|\|\Delta\eta\|\leq \frac{C_0}{12}\|\Delta\eta\|^2+\frac{3}{C_0}\|\varphi-\varphi_d\|^2.
	\end{align}
	But we know that (see Assumption \ref{prop of F and J} (2))
	\begin{align}\label{p12}
	C_0\|\Delta\eta\|^2\leq ((a+\F''(\varphi))\Delta\eta,\Delta\eta).
	\end{align}
	Combining (\ref{p13})-(\ref{p12}) and substituting it in (\ref{p11}) to find 
	\begin{align}\label{p19}
	-\frac{1}{2}\frac{\d}{\d t}\|\nabla\eta\|^2&+\frac{C_0}{2}\|\Delta\eta\|^2\leq \left[\frac{18}{\nu C_0^2}\|\J\|_{\mathrm{L}^1}^4\|\nabla\varphi\|_{\mathbb{L}^4}^4 + \frac{36}{\nu C_0^2}\|\nabla a\|_{\mathbb{L}^{\infty}}^4\|\nabla\varphi\|_{\mathbb{L}^4}^4\right]\|\p\|^2\nonumber\\& +C(C_0)\left(\|\u\|_{\mathbb{H}^2}^2+\|\nabla\J\|_{\mathbb{L}^1}^2\right)\|\nabla\eta\|^2+ \frac{\nu}{4}\|\nabla\p\|^2 +\frac{3}{C_0}\|\varphi-\varphi_d\|^2.
	\end{align}
	Let us integrate the inequality \eqref{p19} from $t$ to $T$ to find 
	\begin{align}\label{p21}
	&\|\nabla\eta(t)\|^2+C_0\int_t^T\|\Delta\eta(s)\|^2\d s\nonumber\\&\leq \|\nabla\eta_T\|^2+\frac{6}{C_0}\int_t^T\|\varphi(s)-\varphi_d(s)\|^2\d s+C(C_0)\int_t^T\left(\|\u(s)\|_{\mathbb{H}^2}^2+\|\nabla\J\|_{\mathbb{L}^1}^2\right)\|\nabla\eta(s)\|^2\d s\nonumber\\&\quad +\int_t^T\left[\frac{36}{\nu C_0^2}\|\J\|_{\mathrm{L}^1}^4 + \frac{72}{\nu C_0^2}\|\nabla a\|_{\mathbb{L}^{\infty}}^4\right]\|\nabla\varphi(s)\|_{\mathbb{L}^4}^4\|\p(s)\|^2\d s + \frac{\nu}{2}\int_t^T\|\nabla\p(s)\|^2 \d s. 
	\end{align}
	An application of the Gronwall inequality in (\ref{p21}) yields 
	\begin{align}\label{p22}
	\|\nabla\eta(t)\|^2&\leq \left[\|\nabla\eta_T\|^2 +\frac{12}{C_0}\left(\int_0^T\|\varphi(t)\|^2\d t+\int_0^T\|\varphi_d(t)\|^2\d t\right)+ \frac{\nu}{2}\int_0^T\|\nabla\p(t)\|^2 \d t\right.\nonumber\\&\qquad \left. +\left[\frac{36}{\nu C_0^2}\|\J\|_{\mathrm{L}^1}^4 + \frac{72}{\nu C_0^2}\|\nabla a\|_{\mathbb{L}^{\infty}}^4\right]\sup_{t\in[0,T]}\|\p(t)\|^2\int_0^T\|\nabla\varphi(t)\|_{\mathbb{L}^4}^4\d t \right]\nonumber\\&\quad \times\exp\left(C(C_0)\int_0^T\left(\|\u(t)\|_{\mathbb{H}^2}^2+\|\nabla\J\|_{\mathbb{L}^1}^2\right)\d t\right),
	\end{align}
	for all $t\in[0,T]$. Note that the right hand side of the above inequality is  finite, since $(\u,\varphi)$ is the unique strong solution of the system \eqref{nonlin phi}-\eqref{nonlin u}, $\eta_T\in\mathrm{V}$ and $\p\in\mathrm{L}^{\infty}(0,T;\G_{\text{div}})\cap \mathrm{L}^{2}(0,T;\V_{\text{div}})$. Thus, we have from previous estimate $\eta\in\mathrm{L}^{\infty}(0,T;\mathrm{V})$. From \eqref{p21}, we also have $\eta\in\mathrm{L}^2(0,T;\mathrm{H}^2)$. Now it is immediate that  $\p_t\in\mathrm{L}^2(0,T;\V_{\text{div}}')$ and $\eta_t\in\mathrm{L}^2(0,T;\mathrm{H})$.
	Thus $\p$ is almost everywhere equal to a continuous function from $[0,T]$ to $\V_{\text{div}}$ and $\eta$ is almost everywhere equal to a continuous function from $[0,T]$ to $\mathrm{V}$. Thus as in Theorem \ref{linearized}, one can also get $\p\in\C([0,T];\G_{\text{div}})$ and $\eta\in\C([0,T];\mathrm{V})$ and the left continuity at $T$ implies that $\p(T)=\p_T\in\G_{\text{div}}$ and $\eta(t)=\eta_T\in\mathrm{V}$ in the weak sense. 	Hence $(\p,\eta)$ is a weak solution of the system  \eqref{adj} satisfying \eqref{0.1} with the regularity given in \eqref{space}. The uniqueness of weak solution follows from the linearity of  the system \eqref{adj}. 
\end{proof}

	\subsection{Pontryagin maximum principle}
	In this subsection, we prove Pontryagin maximum principle  for the  optimal control problem \eqref{control problem}. 
	Pontryagin Maximum principle  gives a first order necessary condition for the optimal control problems having differential equations as constraints. We also characterize the optimal control in terms of the adjoint variables, which we obtained in the previous subsection.
	Even though we published the subsection title as Pontryagin maximum principle, our problem is a minimization of the cost functional given in (\ref{cost}) and hence we obtain  a  minimum principle. The following minimum principle is satisfied by the optimal triplet (if it exists) $(\u^*,\varphi^*,\U^*)\in\mathscr{A}_{\text{ad}}$:
	\begin{eqnarray}
	\frac{1}{2}\|\U^*(t)\|^2+( \p(t),\U^*(t) ) \leq \frac{1}{2}\|\mathrm{W}\|^2+( \p(t),\mathrm{W} ),
	\end{eqnarray} 
	for all  $\mathrm{W}\in\mathcal{U},$ and a.e. $t\in[0,T]$. 
	
	Let us now recall the definition of subgradient  of a function and its subdifferential, which is useful in the sequel. Let $\X$ be a real Banach space, $\X'$ be its topological dual and $\langle\cdot,\cdot\rangle_{\X'\times\X}$ be the duality pairing between $\X'$ and $\X$. 
	\begin{definition}[Subgradient, Subdifferential] Let $f:\X\to(-\infty,\infty],$  a functional on $\X$. A linear functional $u'\in\X'$  is called \emph{subgradient} of $f$ at $u$ if $f(u)\neq +\infty$ and for all $v \in\X$
		$$f(v)\geq f(u)+\langle u',v-u\rangle_{\X'\times \X},$$ holds. The set of all subgradients of $f$ at u is called \emph{subdifferential} $\partial f(u)$ of $f$ at $u$.
	\end{definition}

	\begin{theorem}[Pontryagin Minimum Principle]\label{main}
		Let $(\u^*,\varphi^*,\U^*)\in\mathscr{A}_{\text{ad}}$ be the optimal solution of the Problem \ref{control problem}. Then there exists a unique weak solution $(\p,\eta)$ of the adjoint system \eqref{adj} and for almost every $t \in [0,T]$ and $\mathrm{W} \in \mathcal{U}$, we have 
		\begin{eqnarray}\label{3.41}
		\frac{1}{2}\|\U^*(t)\|^2 +( \p(t),\U^*(t) ) \leq \frac{1}{2}\|\mathrm{W}\|^2 +( \p(t),\mathrm{W} ).  
		\end{eqnarray}
	\end{theorem}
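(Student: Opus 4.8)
The plan is to derive the minimum principle from Ekeland's variational principle applied to the \emph{reduced} cost functional, i.e.\ $\mathcal{J}$ viewed as a function of the control alone via $\U\mapsto\mathcal{J}(\u_\U,\varphi_\U,\U)$, where $(\u_\U,\varphi_\U)$ is the unique strong solution of \eqref{nonlin phi}--\eqref{initial conditions} with control $\U$ supplied by Theorem \ref{strongsol}. I would equip $\mathscr{U}_{\text{ad}}=\mathrm{L}^2(0,T;\mathcal{U})$ with the complete metric $d(\U_1,\U_2)=\|\U_1-\U_2\|_{\mathrm{L}^2(0,T;\G_{\text{div}})}$ and note that the reduced $\mathcal{J}$ is continuous on it, by the continuous dependence estimate of Theorem \ref{unique} (which controls $\u$ in $\mathrm{L}^2(0,T;\V_{\text{div}})$ and $\varphi$ in $\mathrm{L}^2(0,T;\mathrm{H})$, matching the terms of \eqref{cost}). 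Since $(\u^*,\varphi^*,\U^*)$ is optimal, $\mathcal{J}(\U^*)\le\inf_{\mathscr{U}_{\text{ad}}}\mathcal{J}+\varepsilon$ for every $\varepsilon>0$, so Ekeland's principle produces $\U^\varepsilon\in\mathscr{U}_{\text{ad}}$ with $\|\U^\varepsilon-\U^*\|_{\mathrm{L}^2(0,T;\G_{\text{div}})}\le\sqrt{\varepsilon}$ that genuinely minimizes the perturbed functional $\mathcal{J}(\cdot)+\sqrt{\varepsilon}\,d(\cdot,\U^\varepsilon)$.

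Next I would compute the one-sided G\^ateaux derivative of $\mathcal{J}$ at $\U^\varepsilon$. For $\mathrm{W}\in\mathscr{U}_{\text{ad}}$ and $\theta\in(0,1)$ put $\U^\varepsilon_\theta=\U^\varepsilon+\theta(\mathrm{W}-\U^\varepsilon)$, and let $(\w,\psi)$ be the unique weak solution (Theorem \ref{linearized}, applied around the controlled strong solution $(\u^\varepsilon,\varphi^\varepsilon)$) of the linearized system \eqref{lin w}--\eqref{lin initial conditions} with data $\U=\mathrm{W}-\U^\varepsilon$, $\widetilde{\h}=\mathbf{0}$ and zero initial values. Establishing that $\theta^{-1}\big((\u^\varepsilon_\theta,\varphi^\varepsilon_\theta)-(\u^\varepsilon,\varphi^\varepsilon)\big)\to(\w,\psi)$ in the energy spaces as $\theta\to0^+$ gives
\begin{align*}
\mathcal{J}'(\U^\varepsilon)(\mathrm{W}-\U^\varepsilon)&=\int_0^T(\nabla(\u^\varepsilon-\u_d),\nabla\w)\d t+\int_0^T(\varphi^\varepsilon-\varphi_d,\psi)\d t\\
&\quad+(\u^\varepsilon(T)-\u_f,\w(T))+(\varphi^\varepsilon(T)-\varphi_f,\psi(T))+\int_0^T(\U^\varepsilon,\mathrm{W}-\U^\varepsilon)\d t,
\end{align*}
and the Ekeland minimality of $\U^\varepsilon$ forces $\mathcal{J}'(\U^\varepsilon)(\mathrm{W}-\U^\varepsilon)+\sqrt{\varepsilon}\,\|\mathrm{W}-\U^\varepsilon\|_{\mathrm{L}^2(0,T;\G_{\text{div}})}\ge0$.

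The crucial step is to eliminate $(\w,\psi)$ through the adjoint state $(\p^\varepsilon,\eta^\varepsilon)$, the weak solution of \eqref{adj} attached to $(\u^\varepsilon,\varphi^\varepsilon)$, whose existence and regularity \eqref{space} are supplied by Theorem \ref{adjoint}. Testing the linearized equations \eqref{weak_lin} with $(\p^\varepsilon,\eta^\varepsilon)$ and the adjoint equations \eqref{0.1} with $(\w,\psi)$, integrating over $[0,T]$ and adding, the terminal conditions $\p^\varepsilon(T)=\u^\varepsilon(T)-\u_f$ and $\eta^\varepsilon(T)=\varphi^\varepsilon(T)-\varphi_f$ absorb the terminal cost terms, while the transport, Korteweg, convolution and $\F''(\varphi^\varepsilon)\Delta\eta^\varepsilon$ contributions cancel in conjugate pairs, leaving the duality identity
\begin{align*}
\int_0^T(\nabla(\u^\varepsilon-\u_d),\nabla\w)\d t&+\int_0^T(\varphi^\varepsilon-\varphi_d,\psi)\d t+(\u^\varepsilon(T)-\u_f,\w(T))\\
&+(\varphi^\varepsilon(T)-\varphi_f,\psi(T))=\int_0^T(\p^\varepsilon,\mathrm{W}-\U^\varepsilon)\d t.
\end{align*}
Hence the variational inequality collapses to $\int_0^T(\p^\varepsilon+\U^\varepsilon,\mathrm{W}-\U^\varepsilon)\d t\ge-\sqrt{\varepsilon}\,\|\mathrm{W}-\U^\varepsilon\|_{\mathrm{L}^2(0,T;\G_{\text{div}})}$. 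I expect this cancellation bookkeeping to be the main obstacle: it relies essentially on the \emph{strong}-solution regularity of $(\u^\varepsilon,\varphi^\varepsilon)$ (so that $\varphi^\varepsilon\in\mathrm{L}^2(0,T;\mathrm{H}^2)$, $\u^\varepsilon\in\mathrm{L}^2(0,T;\mathbb{H}^2)$) to make the $\F''(\varphi^\varepsilon)\Delta\eta^\varepsilon$, $\nabla a\cdot\p^\varepsilon\varphi^\varepsilon$ and $\J$-convolution terms well defined and integrable, and on the enstrophy-induced $-\Delta(\u^\varepsilon-\u_d)$ source pairing correctly with $\nabla\w$.

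Finally I would let $\varepsilon\to0$. Since $\U^\varepsilon\to\U^*$ in $\mathrm{L}^2(0,T;\G_{\text{div}})$, the associated strong states converge to $(\u^*,\varphi^*)$ by continuous dependence, and a continuous-dependence estimate for the linear adjoint system \eqref{adj} (obtained by reusing the energy argument of Theorem \ref{adjoint} on the difference of two adjoint solutions) yields $\p^\varepsilon\to\p$, the adjoint of the optimal state. Passing to the limit gives $\int_0^T(\p+\U^*,\mathrm{W}-\U^*)\d t\ge0$ for every $\mathrm{W}\in\mathscr{U}_{\text{ad}}$. Localizing by choosing $\mathrm{W}$ equal to $\U^*$ off an arbitrarily small set and equal to a fixed $\mathrm{W}\in\mathcal{U}$ on it, and invoking Lebesgue point theory, I obtain the pointwise inequality $(\p(t)+\U^*(t),\mathrm{W}-\U^*(t))\ge0$ for a.e.\ $t\in[0,T]$ and all $\mathrm{W}\in\mathcal{U}$. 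This is exactly the first-order optimality condition for $\U^*(t)$ to minimize the convex map $\mathrm{W}\mapsto\frac12\|\mathrm{W}\|^2+(\p(t),\mathrm{W})$ over $\mathcal{U}$, and convexity of this map upgrades the inequality to \eqref{3.41}, completing the proof.
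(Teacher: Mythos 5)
Your proposal is correct in outline, but it takes a genuinely different route from the paper's. The paper equips $\mathscr{U}_{\text{ad}}$ with the Ekeland metric $d(\U_1,\U_2)=\text{meas}\{t:\U_1(t)\neq\U_2(t)\}$ and perturbs the $\varepsilon$-minimizer by spike (needle) variations \eqref{3.15}; the resulting difference quotients converge to a linearized system driven by a Dirac mass at the spike time, which the paper handles through an evolution-operator formulation (Lemmas \ref{lem3.12} and \ref{lem3.13}), and the minimum principle comes out pointwise at every left Lebesgue point; Theorem \ref{main} is then a corollary of Theorem \ref{3.10} combined with the existence result, Theorem \ref{optimal}. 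You instead work with the $\mathrm{L}^2(0,T;\G_{\text{div}})$ metric and convex (G\^ateaux) variations $\U^\varepsilon+\theta(\W-\U^\varepsilon)$, derive an \emph{integrated} variational inequality through the same adjoint duality, and only afterwards localize by Lebesgue differentiation and use convexity of $\W\mapsto\frac{1}{2}\|\W\|^2+(\p(t),\W)$ to upgrade to \eqref{3.41}. The trade-off is clear: the paper's needle-variation argument never uses convexity of the admissible set or of the control cost, so it is a true Pontryagin-type argument that would survive a nonconvex $\mathcal{U}$, at the price of the delicate Dirac-delta/evolution-operator analysis; your argument avoids that machinery entirely (your difference-quotient lemma, the analogue of Lemma \ref{lem3.13}, is easier for convex variations), but it is tied to the convex structure of this particular problem. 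Note also that Theorems \ref{linearized} and \ref{adjoint} must be invoked with the base state $(\u^\varepsilon,\varphi^\varepsilon)$ rather than the uncontrolled solution; this is harmless, since their proofs use only the strong-solution regularity of the base state.

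The one step I would not let pass as written is your final limit $\varepsilon\to 0$. It requires (i) strong-solution bounds for $(\u^\varepsilon,\varphi^\varepsilon)$ that are uniform in $\varepsilon$ (the adjoint energy estimates involve $\int_0^T\|\u^\varepsilon(t)\|_{\mathbb{H}^2}^2\d t$, $\sup_{t\in[0,T]}\|\F'''(\varphi^\varepsilon(t))\|_{\mathrm{L}^{\infty}}$ and $\int_0^T\|\nabla\varphi^\varepsilon(t)\|_{\mathbb{L}^4}^4\d t$), and (ii) a continuous-dependence theory for the adjoint system with respect to perturbations of its coefficients $(\u^\varepsilon,\varphi^\varepsilon)$; neither is established in the paper, and (ii) in particular is real work. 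Fortunately the whole step is avoidable: since Theorem \ref{main} assumes an exact minimizer, $\U^*$ itself satisfies the Ekeland-type inequality \eqref{3.18} with $\varepsilon=0$ (because $\mathcal{J}\geq\mathcal{J}(\u^*,\varphi^*,\U^*)$ everywhere), so your entire argument can be run directly at $\U^*$: the G\^ateaux inequality becomes $\int_0^T(\p+\U^*,\W-\U^*)\d t\geq 0$ with the adjoint $(\p,\eta)$ already attached to the optimal state, and no limit of adjoints is ever needed. With that simplification (or with (i)--(ii) supplied) your proof is complete and correct.
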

	The proof of Theorem \ref{main} follows as a corollary of Theorem \ref{3.10} below. Note that one can rewrite \eqref{3.41} as
	\begin{eqnarray}
	( -\p(t),  \mathrm{W} -\U^*(t)) \leq \frac{1}{2}\|\mathrm{W}\|^2 - \frac{1}{2}\|\U^*(t)\|^2,\ \mbox{  a.e.,   }  \ t \in [0,T].  
	\end{eqnarray}
	Note that $-\p \in \partial \frac{1}{2}\|{\U^*}(t)\|^2$, where $\partial$ denotes the subdifferential. Since, $\frac{1}{2}\|\cdot\|^2$ is G\^ateaux differentiable, the subdifferential consists of a single point and it follows that
	\begin{equation*}
	-\p(t) = \U^*(t), \ \text{ a.e.}\  t \in [0,T].
	\end{equation*}
If we take $ \mathscr{U}_{ad}$ as in example \ref{example}; then we get 
$$ ( \p(t) + \U^*(t), \U(t) ) = 0 ;\ \text{ for all } \ \U \in \mathcal{U},  \mbox{  a.e.,   }  \ t \in [0,T] .$$
Since the above equality is true for all $\U \in \mathcal{U}$, we get $\p(t) + \U^*(t)\in\mathcal{U}^{\perp}$, a.e. $t\in[0,T]$. Since $(\U^*+\p,\U) =0$, for all $\U\in\mathcal{U}$, we obtain $\U^*$ is the orthogonal projection of $-\p$ onto $\mathcal{U}$, i.e., $\U^*(t)=\Pi_{\mathcal{U}}(-\p(t))$, a.e. $t\in[0,T]$, where $\Pi_{\mathcal{U}}$ is the projection onto $\mathcal{U}$. (see \cite{BDM}). 

	Equivalently the above minimum principle may be  written in terms of the \emph{Hamiltonian formulation}. 
	Let us first define the \emph{Lagrangian} by
	$$\mathscr{L}(\u,\varphi,\U) = \frac{1}{2} \left[\|\nabla(\u-\u_d)\|^2 + \|\varphi - \varphi_d\|^2+ \|\U\|^2\right].$$
	Then, we can define the corresponding \emph{Hamiltonian} by
	$$\mathscr{H}(\u,\varphi,\U,\p,\eta)= \mathscr{L}(\u,\varphi,\U) + \langle \p, \mathscr{N}_1(\u,\varphi,\U) \rangle + \langle \eta, \mathscr{N}_2(\u,\varphi)\rangle,$$ where $\mathscr{N}_1$ and $\mathscr{N}_2$ are defined by (\ref{n1n2}).
	Hence, we have the following Pontryagin minimum principle:
	\begin{eqnarray}
	\mathscr{H}(\u^*(t),\varphi^*(t),\U^*(t),\p(t),\eta(t)) \leq \mathscr{H}(\u^*(t),\varphi^*(t),\mathrm{W},\p(t),\eta(t)),
	\end{eqnarray}
	for all $\mathrm{W} \in \G_{\text{div}}$, \ a.e., \ $t\in[0,T]$ along with the adjoint system \eqref{adj}. 
	
	In order to prove Theorem \ref{main}, we first prove the existence of approximate minimizer of the cost functional which is essence of Ekeland's variational principle.
	In the rest of this section,  we use Ekeland's variational principle to prove the existence of an approximate optimal triplet to the problem (\ref{control problem}).  
	
	The \emph{Ekeland variational principle} in general Banach space can be stated as:
	\begin{theorem}[Ekeland variational principle, Theorem 1.1,  \cite{EKE}]\label{th1}
		Let $(\mathbb{X},d)$ be a complete metric space, and $\mathcal{F}: \mathbb{X}\rightarrow \mathbb{R}\cup\{+\infty\}$ a lower semi-continuous function, not identically $+\infty$, and bounded from below. Then for every point $u\in \mathbb{X}$ such that
		\begin{equation}\label{emin}
		\inf_{\mathbb{X}} \mathcal{F}\le \mathcal{F}(u)\le \inf_{\mathbb{X}}\mathcal{F}+\e,
		\end{equation}
		and every $\lambda>0$, there exists some point $v\in \mathbb{X}$ such that
		\begin{equation*}\left\{\begin{aligned}
		\mathcal{F}(v)&\le \mathcal{F}(u),\\
		d(u,v)&\le \lambda,\\
		\forall w\neq v,\ \mathcal{F}(w)&>\mathcal{F}(v)-(\e/\lambda)d(v,w).
		\end{aligned}\right.\end{equation*}
		A point $u \in \mathbb{X}$ such that \eqref{emin} holds is called an \emph{$\varepsilon-$minimizer} of $\mathcal{F}$.
	\end{theorem}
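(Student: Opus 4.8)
The plan is to follow the classical constructive argument, realizing the desired point $v$ as a \emph{minimal element} of a suitable partial order on $\mathbb{X}$. First I would fix $\alpha := \e/\lambda > 0$ and introduce the relation
\[
 w \preceq z \quad\Longleftrightarrow\quad \mathcal{F}(w) + \alpha\, d(w,z) \le \mathcal{F}(z), \qquad w,z \in \mathbb{X}.
\]
A short check shows that $\preceq$ is a genuine partial order: reflexivity is immediate; antisymmetry follows by adding the two defining inequalities for $w \preceq z$ and $z \preceq w$, which forces $2\alpha\, d(w,z) \le 0$ and hence $w = z$; and transitivity follows from the triangle inequality for $d$. The point of this definition is that $v$ is a minimal element of $(\mathbb{X}, \preceq)$ precisely when no $w \neq v$ satisfies $\mathcal{F}(w) + \alpha\, d(w,v) \le \mathcal{F}(v)$, i.e. when $\mathcal{F}(w) > \mathcal{F}(v) - \alpha\, d(v,w)$ for every $w \neq v$, which is exactly the third conclusion of the theorem.

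Next I would construct such a minimal element lying below the prescribed $\e$-minimizer $u$. Starting from $u_0 := u$, I build a $\preceq$-decreasing sequence inductively: given $u_n$, set $S_n := \{\, w \in \mathbb{X} : w \preceq u_n \,\}$, which is nonempty (it contains $u_n$) and on which $\mathcal{F}$ is bounded below by $\inf_{\mathbb{X}} \mathcal{F}$; then choose $u_{n+1} \in S_n$ with $\mathcal{F}(u_{n+1}) \le \inf_{S_n} \mathcal{F} + 2^{-n}$. By construction $u_{n+1} \preceq u_n$, so $\{\mathcal{F}(u_n)\}$ is nonincreasing and bounded below, hence convergent; moreover $\alpha\, d(u_{n+1}, u_n) \le \mathcal{F}(u_n) - \mathcal{F}(u_{n+1})$, and summing this telescoping bound shows $\sum_n d(u_{n+1}, u_n) < \infty$. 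Thus $\{u_n\}$ is Cauchy and, by completeness of $\mathbb{X}$, converges to some $v \in \mathbb{X}$.

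Finally I would verify the three conclusions. Passing to the limit $m \to \infty$ in $\mathcal{F}(u_m) + \alpha\, d(u_m, u_n) \le \mathcal{F}(u_n)$ (valid for $m \ge n$ by transitivity), and using the lower semicontinuity of $\mathcal{F}$ together with the continuity of $d$, yields $v \preceq u_n$ for every $n$; in particular $v \preceq u_0 = u$ gives both $\mathcal{F}(v) \le \mathcal{F}(u)$ and $\alpha\, d(v,u) \le \mathcal{F}(u) - \mathcal{F}(v) \le \mathcal{F}(u) - \inf_{\mathbb{X}} \mathcal{F} \le \e$, whence $d(v,u) \le \e/\alpha = \lambda$. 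For minimality, suppose $w \preceq v$; then $w \preceq u_n$ for all $n$, so $w \in S_n$ and $\mathcal{F}(u_{n+1}) \le \mathcal{F}(w) + 2^{-n}$, while $v \preceq u_{n+1}$ gives $\mathcal{F}(v) \le \mathcal{F}(u_{n+1})$; combining these with $\alpha\, d(w,v) \le \mathcal{F}(v) - \mathcal{F}(w)$ produces $\alpha\, d(w,v) \le 2^{-n} \to 0$, so $w = v$.

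The main obstacle, and the step I would spend the most care on, is establishing existence of the minimal element, that is, the inductive construction together with the Cauchy property: one must choose the approximating tolerances $2^{-n}$ so that both convergence of $\{u_n\}$ (which uses only that $\mathcal{F}(u_n)$ decreases) and minimality of the limit (which needs the tolerances to vanish) hold simultaneously, and one must invoke lower semicontinuity at exactly the right place to pass $\preceq u_n$ to the limit. The verification of the three listed inequalities is then routine once $v$ and its minimality are in hand.
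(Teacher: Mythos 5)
Your proposal is correct, but there is nothing in the paper to compare it against: the paper imports this statement as a known result (Theorem 1.1 of \cite{EKE}) and gives no proof of it, using it only as a tool in the proof of Theorem \ref{3.10}. What you have written is essentially the classical argument (Ekeland's original one, in its ordering-based formulation): introduce the relation $w \preceq z \iff \mathcal{F}(w) + (\varepsilon/\lambda)\, d(w,z) \le \mathcal{F}(z)$, build a $\preceq$-decreasing sequence of almost-minimizers with vanishing tolerances $2^{-n}$, use the telescoping bound and completeness to get a limit $v$, use lower semicontinuity of $\mathcal{F}$ and continuity of $d$ to pass $\preceq u_n$ to the limit, and then exploit the vanishing tolerances to show $v$ is $\preceq$-minimal, which is exactly the third conclusion; the first two conclusions follow from $v \preceq u$ together with $\mathcal{F}(u) \le \inf_{\mathbb{X}}\mathcal{F} + \varepsilon$. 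All of these steps check out. One small technical remark: antisymmetry of $\preceq$, as you derive it by adding the two defining inequalities, fails at points where $\mathcal{F} = +\infty$ (adding $+\infty \le +\infty$ to itself forces nothing), so $\preceq$ is only a partial order on the set where $\mathcal{F}$ is finite. This is harmless for your argument, for two reasons: your entire construction takes place inside $\{w : \mathcal{F}(w) \le \mathcal{F}(u)\}$, on which $\mathcal{F}$ is finite because $\mathcal{F}(u) \le \inf_{\mathbb{X}}\mathcal{F} + \varepsilon < +\infty$; and, in fact, antisymmetry is never invoked, since your final minimality step concludes $w = v$ directly from $(\varepsilon/\lambda)\, d(w,v) \le 2^{-n} \to 0$ rather than from the order structure. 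With that caveat noted (or with the relation restricted to the effective domain of $\mathcal{F}$), the proof is complete and correct.
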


	Now we define the so called \emph{spike variation} in the control which helps us to establish the existence of an approximate optimal triplet to the problem \eqref{control problem}. 
	\begin{definition}
		Choose $\tau \in (0,T)$. Let $\W \in \G_{\text{div}}$ be arbitary and $h$ be chosen such that $0<h \leq \tau$. The spike variation $\U_{\tau,h,\W}$ of a control $\U \in \mathscr{U}_{\text{ad}}$, the set of admissible controls, is defined by
		\begin{equation}\label{3.15}
		\U_{\tau,h,\W}(t) =\begin{cases}
		\W, & \text{if $t \in [0,T] \cap (\tau -h, \tau)$},\\
		\U(t), & \text{if $t \notin [0,T] \cap (\tau -h, \tau)$}.
		\end{cases}
		\end{equation}
	\end{definition}
	For brevity, we denote the spike variation as $\U^h$, in which case $\tau$ and $\W$ are assumed to be fixed. 
	Recalling that the set of admissible controls $\mathscr{U}_{\text{ad}}$ is  consisting of controls $\U \in\mathrm{L}^{2}(0,T;\G_{\text{div}})$, we equip $\mathscr{U}_{\text{ad}}$ with a metric $d$, given by
	\begin{align*}
	d(\u,\v) = \text{meas}\big\{t \in [0,T] : \u(t) \neq \v(t)\big\}, 
	\end{align*}
	for all $\u,\v \in \mathscr{U}_{\text{ad}}$, where we use the Lebegue measure on $\mathbb{R}$. This is called the \emph{ Ekeland metric} (see \cite{EKE}). As proved in \cite{EKE}, it can be easily shown that $(\mathscr{U}_{\text{ad}},d)$ is a complete metric space. We can observe that $\U^h \in \mathscr{U}_{\text{ad}}$ and $d(\U^h,\U) = h$. Furthermore, one can verify  that $\U^h \rightarrow \U$ strongly in $\mathrm{L}^2(0,T;\G_{\text{div}})$ as $h \rightarrow 0$.
	\begin{definition}
		Let $\mathbb{X}$ be Banach space. A \emph{Lebesgue point} of an integrable $\mathbb{X}$-valued function $f(t)$ defined on $0\leq t\leq T$ is any point $\tau$, where (two sided limit)$$\lim_{h \rightarrow 0}\frac{1}{h}\int_{\tau-h}^{\tau}\|f(s)-f(\tau)\|_{\mathbb{X}}\d s=0.$$ 
	\end{definition}
	Almost every $t\in[0,T]$ is a Lebesgue point of each component of $f(\cdot)$, and a Lebesgue point of all components is a Lebesgue point of $f(\cdot)$: thus almost every $t\in[0,T]$ is a Lebesgue point of $f(\cdot)$. Since we take only limits as $h\to 0^+$; points where $\lim\limits_{h\to 0^+}$ exists are \emph{left Lebesgue points}. 
	\begin{theorem}[\emph{$\varepsilon$-optimal solution for \eqref{control problem}} and minimum principle] \label{3.10} 
		There exist an $\varepsilon$-optimal solution $(\u_{\varepsilon},\varphi_{\varepsilon},\U_{\varepsilon})$ of the problem \ref{control problem}, in the sense that 
		\begin{align}
		\mathcal{J}(\u_{\varepsilon},\varphi_{\varepsilon},\U_{\varepsilon}) \leq \inf_{(\u,\varphi,\U) \in \mathcal{A}_{\text{ad}}}\mathcal{J}(\u,\varphi,\U) + \varepsilon.
		\end{align}
		Furthermore, there exist a weak solution $(\p,\eta)$ to the adjoit system \eqref{adj} and for almost every $t \in [0,T]$ and $\mathrm{W} \in \G_{\text{div}}$, we have 
		\begin{align*}
		\frac{1}{2}\|\U_{\varepsilon}(t)\|^2 -\langle \p(t),\U_{\varepsilon}(t) \rangle \leq \frac{1}{2}\|\mathrm{W}\|^2 -\langle \p(t),\mathrm{W} \rangle + \varepsilon.  
		\end{align*}
	\end{theorem}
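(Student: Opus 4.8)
The plan is to realize the cost functional as a function of the control alone and apply the Ekeland variational principle (Theorem \ref{th1}) on the complete metric space $(\mathscr{U}_{\text{ad}},d)$. First I would define $\mathcal{F}:\mathscr{U}_{\text{ad}}\to[0,\infty)$ by $\mathcal{F}(\U):=\mathcal{J}(\u,\varphi,\U)$, where $(\u,\varphi)$ is the unique strong solution of \eqref{nonlin phi}-\eqref{initial conditions} associated with $\U$ furnished by Theorem \ref{strongsol}. This map is well defined and finite, bounded below by $0$, and not identically $+\infty$; the existence of a point satisfying $\mathcal{F}(\U_{\varepsilon})\le\inf_{\mathscr{U}_{\text{ad}}}\mathcal{F}+\varepsilon$ is then immediate from the definition of the infimum, which already gives the first assertion. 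To run Ekeland I also need $\mathcal{F}$ to be lower semicontinuous for the Ekeland metric $d$; this I would obtain from the continuous dependence of the strong solution on $\U$ (energy estimates on the difference of two solutions, in the spirit of Theorem \ref{unique}, noting that $\U$ enters the velocity equation exactly as the forcing $\h$ does) together with the lower semicontinuity of the term $\frac12\int_0^T\|\U(t)\|^2\d t$.

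Next I would apply Theorem \ref{th1} with $\lambda=\sqrt{\varepsilon}$. This produces a control, still denoted $\U_{\varepsilon}$, with associated state $(\u_{\varepsilon},\varphi_{\varepsilon})$, such that $\mathcal{F}(\U_{\varepsilon})\le\inf\mathcal{F}+\varepsilon$ and, crucially,
$$\mathcal{F}(\W)\ge\mathcal{F}(\U_{\varepsilon})-\sqrt{\varepsilon}\,d(\U_{\varepsilon},\W)\quad\text{for all }\W\in\mathscr{U}_{\text{ad}}.$$
I would then feed in the spike variation $\U^h:=\U_{\tau,h,\mathrm{W}}$ of $\U_{\varepsilon}$ defined in \eqref{3.15}. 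Since $d(\U^h,\U_{\varepsilon})=h$, the displayed inequality gives $\mathcal{F}(\U^h)-\mathcal{F}(\U_{\varepsilon})\ge-\sqrt{\varepsilon}\,h$, so that, dividing by $h$ and letting $h\to0^+$ along a common left Lebesgue point $\tau$ of $\U_{\varepsilon}(\cdot)$ and $\frac12\|\U_{\varepsilon}(\cdot)\|^2$,
$$\liminf_{h\to0^+}\frac{\mathcal{F}(\U^h)-\mathcal{F}(\U_{\varepsilon})}{h}\ge-\sqrt{\varepsilon}.$$

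The heart of the argument is the explicit evaluation of this limit. Writing $(\u^h,\varphi^h)$ for the state driven by $\U^h$, I would first establish the a priori bound $\|\u^h-\u_{\varepsilon}\|_{\mathrm{L}^\infty(0,T;\G_{\text{div}})}+\|\varphi^h-\varphi_{\varepsilon}\|_{\mathrm{L}^\infty(0,T;\mathrm{H})}=O(h)$, and then show that the rescaled differences $(\u^h-\u_{\varepsilon})/h$ and $(\varphi^h-\varphi_{\varepsilon})/h$ converge, as $h\to0^+$, to the unique weak solution $(\w,\psi)$ of the linearized system \eqref{lin w}-\eqref{lin initial conditions} (with $\widetilde{\h}=0$) started from $\w(\tau)=\mathrm{W}-\U_{\varepsilon}(\tau)$, $\psi(\tau)=0$, the factor $\frac1h\mathbf{1}_{(\tau-h,\tau)}$ acting as an approximation of $\delta_{\tau}$; here Theorem \ref{linearized} supplies the well-posedness of the limit. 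The control term contributes, by Lebesgue differentiation, $\frac12\|\mathrm{W}\|^2-\frac12\|\U_{\varepsilon}(\tau)\|^2$, while the state-dependent part of $\mathcal{J}$ contributes a linear functional of $(\w,\psi)$. Testing the adjoint system \eqref{adj} associated with $(\u_{\varepsilon},\varphi_{\varepsilon})$ (whose solvability is given by Theorem \ref{adjoint}) against $(\w,\psi)$ and the linearized system against $(\p,\eta)$, and integrating by parts in time over $(\tau,T)$, all the coupling and dissipative terms cancel by the very construction of \eqref{adj}; the terminal terms match the data $\p(T)=\u_{\varepsilon}(T)-\u_f$, $\eta(T)=\varphi_{\varepsilon}(T)-\varphi_f$, and the right-hand sides $-\Delta(\u_{\varepsilon}-\u_d)$, $\varphi_{\varepsilon}-\varphi_d$ match the integrands of the enstrophy and $\mathrm{L}^2$ tracking terms. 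This telescoping collapses the state contribution to $-\langle\p(\tau),\w(\tau)\rangle=-\langle\p(\tau),\mathrm{W}-\U_{\varepsilon}(\tau)\rangle$, whence
$$\lim_{h\to0^+}\frac{\mathcal{F}(\U^h)-\mathcal{F}(\U_{\varepsilon})}{h}=\tfrac12\|\mathrm{W}\|^2-\tfrac12\|\U_{\varepsilon}(\tau)\|^2-\langle\p(\tau),\mathrm{W}-\U_{\varepsilon}(\tau)\rangle.$$
Combining this with the previous display gives, at every such $\tau$ (hence a.e.\ $t\in[0,T]$), the inequality $\frac12\|\U_{\varepsilon}(t)\|^2-\langle\p(t),\U_{\varepsilon}(t)\rangle\le\frac12\|\mathrm{W}\|^2-\langle\p(t),\mathrm{W}\rangle+\sqrt{\varepsilon}$, and relabeling $\varepsilon$ yields the stated estimate.

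The main obstacle I anticipate is precisely this middle step: obtaining the $O(h)$ bound and the strong convergence of the rescaled state differences to $(\w,\psi)$ in norms strong enough to pass to the limit in the quadratic enstrophy and tracking terms, and justifying the time integration-by-parts duality between the linearized and adjoint systems given that $(\p,\eta)$ and $(\w,\psi)$ carry only the regularity furnished by Theorems \ref{adjoint} and \ref{linearized}. It is here that the strong-solution regularity of $(\u_{\varepsilon},\varphi_{\varepsilon})$ (in particular $\varphi_{\varepsilon}\in\mathrm{L}^\infty(0,T;\mathrm{H}^2)$ and $\u_{\varepsilon}\in\mathrm{L}^2(0,T;\mathbb{H}^2)$) is essential, since it is what renders the delicate coupling terms, such as $\F''(\varphi_{\varepsilon})\Delta\eta$ and $(\nabla\J\ast\varphi_{\varepsilon})\cdot\p$, integrable and the cancellation between the two systems legitimate.
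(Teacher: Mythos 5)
Your proposal takes essentially the same route as the paper's proof: Ekeland's variational principle on $(\mathscr{U}_{\text{ad}},d)$ with the Ekeland metric, spike variations at a left Lebesgue point $\tau$, convergence of the rescaled state differences to the linearized system with jump data $\w(\tau)=\W-\U_{\varepsilon}(\tau)$, $\psi(\tau)=0$ (this is exactly the content of the paper's Lemmas \ref{lem3.12} and \ref{lem3.13}), and the adjoint-duality cancellation that collapses the state contribution to the term $(\p(\tau),\W-\U_{\varepsilon}(\tau))$. The only deviations are minor: you take $\lambda=\sqrt{\varepsilon}$ and relabel at the end where the paper chooses the slope to be $\varepsilon$ directly, and your claim that lower semicontinuity in the Ekeland metric follows from the continuous dependence estimate is heuristic (convergence in $d$ does not yield strong $\mathrm{L}^2$ convergence of the controls, so one needs a weak-convergence/compactness argument there), though the paper itself merely asserts this property without proof.
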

	\begin{proof}
		The cost functional $\mathcal{J}$ is lower semi-continuous on the admissiable class $\mathcal{A}_{\text{ad}}$, and is bounded below. So by Ekeland's variational principle (see Theorem \ref{th1}), we get that for all $\varepsilon >0$, there exists $(\u_{\varepsilon},\varphi_{\varepsilon},\U_{\varepsilon}) \in \mathcal{A}_{\text{ad}}$ such that
		\begin{align*}
		\mathcal{J}(\u_{\varepsilon},\varphi_{\varepsilon},\U_{\varepsilon}) \leq \inf_{(\u,\varphi,\U) \in \mathcal{A}_{\text{ad}}}\mathcal{J}(\u,\varphi,\U) + \varepsilon,
		\end{align*}
		and for all $\U \in \mathscr{U}_{\text{ad}}$, we have 
		\begin{align}\label{3.18}
		\mathcal{J}(\u,\varphi,\U) \geq  \mathcal{J}(\u_{\varepsilon},\varphi_{\varepsilon},\U_{\varepsilon}) - \varepsilon d(\U, \U_{\varepsilon}),
		\end{align}
		where $(\u,\varphi)$ is the solution of \eqref{nonlin phi}-\eqref{initial conditions} with control $\U$.
		Since \eqref{3.18} holds for any solution $(\u,\varphi,\U)$, we choose the spike variation $\U^h_{\varepsilon}$ of $\U_{\varepsilon}$ with corresponding trajectory $(\u^h_{\varepsilon},\varphi^h_{\varepsilon})$ and from \eqref{3.18}, we deduce 
		\begin{align}\label{ep}
		-\varepsilon &\leq \frac{1}{h} [\mathcal{J}(\u_{\varepsilon}^h,\varphi_{\varepsilon}^h,\U_{\varepsilon}^h) - \mathcal{J}(\u_{\varepsilon},\varphi_{\varepsilon},\U_{\varepsilon})] \nonumber\\
		&= \frac{1}{2h} \int_0^T\left( \|\nabla(\u_{\varepsilon}^h(t)-\u_d(t))\|^2  -  \|\nabla(\u_{\varepsilon}(t)-\u_d(t))\|^2\right) \d t \nonumber\\
		&\quad+\frac{1}{2h} \int_0^T\left( \|\varphi_{\varepsilon}^h(t) -\varphi_d(t)\|^2  -  \|\varphi_{\varepsilon}(t) -\varphi_d(t)\|^2\right)\d t +\frac{1}{2h} \int_0^T\left(\|\U_{\varepsilon}^h(t)\|^2 - \|\U_{\varepsilon}(t)\|^2\right)\d t\nonumber \\ &\quad+\frac{1}{2h}\left[\|\u_{\e}^h(T)-\u_f\|^2-\|\u_{\e}(T)-\u_f\|^2\right]+\frac{1}{2h}\left[\|\varphi_{\e}^h(T)-\varphi_f\|^2-\|\varphi_{\e}(T)-\varphi_f\|^2\right]\nonumber\\
		&=: \sum_{j=1}^5 I_j^h,
		\end{align}
		where the $I_j^h$ correspond to the terms in the right hand side of (\ref{ep}). 
		Now we obtain the limits as $h \rightarrow 0$ of $I_j^h(j = 1,\ldots,5)$. We know by the definition of spike variation that $(\u^h_{\varepsilon},\varphi^h_{\varepsilon}) = (\u_{\varepsilon},\varphi_{\varepsilon} )$ on $(0,\tau -h)$, since the controls coincide. Therefore we deduce	from (\ref{ep}) that 
		\begin{align}\label{3.20}
		\lim_{h \rightarrow 0}I_1^h &= \lim_{h \rightarrow 0} \left[\frac{1}{2h} \int_{\tau -h}^\tau \left(\|\nabla \u_{\varepsilon}^h(t)\|^2  -  \|\nabla \u_{\varepsilon}(t)\|^2\right) \d t - \frac{1}{h} \int_{\tau -h}^\tau (\nabla(\u_{\varepsilon}^h(t) - \u_{\varepsilon}(t)), \nabla \u_d(t)) \d t\right] \nonumber \\
		&\quad + \lim_{h \rightarrow 0} \left[\frac{1}{2h} \int_{\tau}^T\left( \|\nabla \u_{\varepsilon}^h(t)\|^2  -  \|\nabla \u_{\varepsilon}(t)\|^2\right) \d t - \frac{1}{h} \int_{\tau}^T (\nabla(\u_{\varepsilon}^h(t) - \u_{\varepsilon}(t)), \nabla\u_d(t)) \d t\right] \nonumber \\
		&= \lim_{h \rightarrow 0} \frac{1}{2} \int_{\tau}^T \left(\frac{\nabla(\u_{\varepsilon}^h(t) - \u_{\varepsilon}(t))}{h}, \nabla(\u_{\varepsilon}^h(t) +  \u_{\varepsilon}(t) -2\u_d) \right) \d t,
		\end{align}
		where we used the left Lebesgue convergence theorem to get rid of the integral supported on $(\tau-h,\tau)$. We know that  \begin{align}\label{ep1}\frac{\u_{\varepsilon}^h(t) - \u_{\varepsilon}(t)}{h} \rightarrow \w_{\varepsilon}(t)\text{ and }\u^h_{\varepsilon}(t) \rightarrow \u_{\varepsilon}(t)\text{ as } h \rightarrow 0,\end{align} uniformly for all $\tau\leq t\leq T$  in $\G_{\text{div }}$ (see Lemmas \ref{lem3.12} and \ref{lem3.13} below). Thus, we have 
		\begin{align}\label{3.25a}
		&\left|\frac{1}{2}\int_{\tau}^{T}\left(\frac{\nabla(\u_{\varepsilon}^h(t) - \u_{\varepsilon}(t))}{h}, \nabla(\u_{\varepsilon}^h(t) +  \u_{\varepsilon}(t) -2\u_d) \right) \d t- \int_{\tau}^{T}(\nabla  \w_{\varepsilon}(t), \nabla  (\u_{\varepsilon} (t)- \u_d(t)))\d t \right|\nonumber\\& \leq \left|\int_{\tau}^{T}\left(\frac{\nabla(\u_{\varepsilon}^h(t) - \u_{\varepsilon}(t))}{h}-\nabla  \w_{\varepsilon} (t), \frac{1}{2}\nabla(\u_{\varepsilon}^h(t) +  \u_{\varepsilon}(t) -2\u_d) \right)\d t\right|\nonumber\\&\quad +\left|\int_{\tau}^{T}\left(\nabla\w_{\varepsilon} (t),\frac{1}{2}\nabla(\u_{\varepsilon}^h(t) +  \u_{\varepsilon}(t) -2\u_d) -\nabla  (\u_{\varepsilon} (t)- \u_d(t))\right)\d t\right|\nonumber\\& \leq \frac{1}{2}\left(\int_{\tau}^{T}\left\|\nabla\left(\frac{\u_{\varepsilon}^h(t) - \u_{\varepsilon}(t)}{h}-\w^{\e}(t)\right)\right\|^2\d t\right)^{1/2}\left(\int_{\tau}^{T}\left\|\nabla(\u_{\varepsilon}^h(t) +  \u_{\varepsilon}(t) -2\u_d)\right\|^2\d t\right)^{1/2}\nonumber\\&\quad +\frac{1}{2}\left(\int_{\tau}^{T}\left\|\nabla\w_{\varepsilon} (t)\right\|^2\d t\right)^{1/2}\left(\int_{\tau}^{T}\left\|\nabla(\u_{\varepsilon}^h(t) -  \u_{\varepsilon}(t) )\right\|^2\d t\right)^{1/2}.
		\end{align}
		Passing to the limit in \eqref{3.25a}, using (\ref{ep1}), the regularity of $\u_{\e}^h$, $\u_{\e}$ and $\w_{\varepsilon}$, and using an integration by parts, we derive 
		\begin{align*}
		\lim_{h \rightarrow 0}I_1^h = \int_{\tau}^{T}(\nabla  \w_{\varepsilon}(t), \nabla  (\u_{\varepsilon} (t)- \u_d(t)))\d t = \int_{\tau}^{T}\langle\w_{\varepsilon}(t), -\Delta (\u_{\varepsilon}(t) - \u_d(t))\rangle\d t, 
		\end{align*}
		where we used the fact that $(\u_{\varepsilon} - \u_d)\cdot\mathbf{n}\big|_{\partial\Omega}=0$. Now, we consider  
		\begin{align}
		\lim_{h \rightarrow 0}I_2^h &= \lim_{h \rightarrow 0}\left[ \frac{1}{2h} \int_{\tau -h}^\tau \left(\| \varphi_{\varepsilon}^h(t)\|^2  -  \| \varphi_{\varepsilon}(t)\|^2\right) \d t - \frac{1}{h} \int_{\tau -h}^\tau (\varphi_{\varepsilon}^h(t) - \varphi_{\varepsilon}(t),  \varphi_d(t)) \d t\right] 
		\nonumber\\&\quad + \lim_{h \rightarrow 0} \left[\frac{1}{2h} \int_{\tau}^T \left(\| \varphi_{\varepsilon}^h(t)\|^2  -  \| \varphi_{\varepsilon}(t)\|^2 \right)\d t - \frac{1}{h} \int_{\tau}^T (\varphi_{\varepsilon}^h(t) - \varphi_{\varepsilon}(t),  \varphi_d(t)) \d t \right]\nonumber\\
		&= \lim_{h \rightarrow 0} \frac{1}{2} \int_{\tau}^T \left(\frac{\varphi_{\varepsilon}^h(t) - \varphi_{\varepsilon}(t)}{h},\varphi_{\varepsilon}^h(t) +  \varphi_{\varepsilon}(t) -2\varphi_d \right) \d t.
		\end{align}
		A calculation similar to \eqref{3.25a}, we obtain 
		\begin{align}\label{3.25b}
		&\left|\frac{1}{2} \int_{\tau}^T \left(\frac{\varphi_{\varepsilon}^h(t) - \varphi_{\varepsilon}(t)}{h},\varphi_{\varepsilon}^h(t) +  \varphi_{\varepsilon}(t) -2\varphi_d \right) \d t-\int_{\tau}^T(\psi_{\varepsilon}(t),\varphi_{\varepsilon}(t) - \varphi_d(t) )\d t\right|\nonumber\\&\leq 
		\frac{1}{2}\left(\int_{\tau}^{T}\left\|\frac{\varphi_{\varepsilon}^h(t) - \varphi_{\varepsilon}(t)}{h}-\psi^{\e}(t)\right\|^2\d t\right)^{1/2}\left(\int_{\tau}^{T}\left\|\varphi_{\varepsilon}^h(t) +  \varphi_{\varepsilon}(t) -2\varphi_d\right\|^2\d t\right)^{1/2}\nonumber\\&\quad +\frac{1}{2}\left(\int_{\tau}^{T}\left\|\psi_{\varepsilon} (t)\right\|^2\d t\right)^{1/2}\left(\int_{\tau}^{T}\left\|\varphi_{\varepsilon}^h(t) -  \varphi_{\varepsilon}(t)\right\|^2\d t\right)^{1/2}.
		\end{align}
		Using the fact that (see Lemma \ref{lem3.13} and Remark \ref{rem3.15} below)
		\begin{align}\label{ep2}
		\frac{\varphi_{\varepsilon}^h(t)- \varphi_{\varepsilon}(t)}{h} \rightarrow \psi_{\varepsilon}(t) \ \text{ and }\ \varphi^h_{\varepsilon}(t) \rightarrow \varphi_{\varepsilon} (t)\text{ as }h \rightarrow 0,
		\end{align} 
		uniformly for all $\tau\leq t\leq T$ in $\mathrm{H}$, from  \eqref{3.25b}, we obtain
		\begin{align}
		\lim_{h \rightarrow 0}I_2^h = \int_{\tau}^T(\psi_{\varepsilon}(t),\varphi_{\varepsilon}(t) - \varphi_d(t) )\d t.
		\end{align}
		Now by the definition of spike variation, we know that $\U^h_{\e}=\W$ for $t\in(\tau-h,\tau)$ and $\U^h_{\e}=\U^{\e}$, elsewhere, so that we obtain 
		\begin{align*}
		\lim_{h \rightarrow 0}I_3^h &= \lim_{h \rightarrow 0} \frac{1}{2h} \int_{\tau -h}^\tau \left(\|\W\|^2 - \|\U_{\varepsilon}(t)\|^2\right)\d t 
		= \frac{1}{2} \|\W\|^2 - \frac{1}{2} \|\U_{\varepsilon}(\tau)\|^2,
		\end{align*}
		where $\tau$ is a (left) Lebesgue point. Using (\ref{ep1}) and (\ref{ep2}), it is also clear that 
		\begin{align*}
		\lim_{h \rightarrow 0}I^h_4=	\lim_{h \rightarrow 0}\frac{1}{2}\left(\frac{\u^h_{\e}(T)-\u_{\e}(T)}{h},\u^h_{\e}(T)+\u_{\e}(T)-2\u_f\right)&=(\w_{\varepsilon}(T),\u_{\e}(T)-\u_f),\\ \lim_{h \rightarrow 0}I^h_5=\lim_{h \rightarrow 0}\frac{1}{2}\left(\frac{\varphi^h_{\e}(T)-\varphi_{\e}(T)}{h},\varphi^h_{\e}(T)+\varphi_{\e}(T)-2\varphi_f\right)&=(\psi_{\varepsilon}(T),\varphi_{\e}(T)-\varphi_f).
		\end{align*}
		Combining all the above estimates and using it in (\ref{ep}), we have 
		\begin{align}
		-\varepsilon &\leq \int_{\tau}^T (\w_{\varepsilon}(t), -\Delta(\u_{\varepsilon}(t) - \u_d(t)))\d t + (\psi_{\varepsilon}(t) ,\varphi_{\varepsilon}(t) - \varphi_d(t))\d t\nonumber\\&\quad +(\w_{\varepsilon}(T),\u_{\e}(T)-\u_f)+ (\psi_{\varepsilon}(T),\varphi_{\e}(T)-\varphi_f)+ \frac{1}{2} \|\W\|^2 - \frac{1}{2} \|\U_{\varepsilon}(\tau)\|^2.
		\end{align}
		Using the adjoint system $(\p_{\e},\eta_{\e})$ given in (\ref{adj}), we deduce that 
		\begin{align*}
		-\varepsilon &\leq \int_{\tau}^T (\w_{\varepsilon}, -\partial_t\p_{\e} - \nu \Delta \p_{\e} +(\p_{\e} \cdot \nabla)\u_{\e} + (\u_{\e} \cdot \nabla)\p_{\e} - (\nabla \varphi_{\e})^{\top} \eta_{\e} +\nabla q_{\e}) \d t \\
		&\quad +\int_{\tau}^T (\psi_{\varepsilon},-\partial_t\eta_{\e}  +\J \ast (\p_{\e} \cdot \nabla \varphi_{\e}) - (\nabla{\J} \ast  \varphi_{\e})\cdot \p_{\e} + \nabla a\cdot \p_{\e} \varphi_{\e} 
		- \u_{\e} \cdot\nabla \eta_{\e} - a \Delta \eta_{\e} )\d t\\ &\quad +\int_{\tau}^{T}(\psi_{\varepsilon}, \J\ast \Delta \eta_{\e} - \F''(\varphi_{\e})\Delta \eta_{\e})\d t +(\w_{\varepsilon}(T),\u_{\e}(T)-\u_f)+ (\psi_{\varepsilon}(T),\varphi_{\e}(T)-\varphi_f) \\&\quad  + \frac{1}{2} \|\W\|^2 - \frac{1}{2} \|\U_{\varepsilon}(\tau)\|^2. 
		\end{align*}
		Since $\text{div }\p_{\e}=0$, an integration by parts yields 
		\begin{align*}
		-\e&\leq \int_{\tau}^T (\partial_t  \w_{\varepsilon} - \nu \Delta \w_{\varepsilon} + (\w_{\varepsilon} \cdot \nabla ) \u_{\e} + (\u_{\e} \cdot \nabla )\w_{\varepsilon} + \nabla\widetilde{\uppi}_{\w_{\varepsilon}} ,\p_{\varepsilon}) \d t \\
		&\quad+\int_{\tau}^T (-\nabla a\psi_{\varepsilon} \varphi_{\e} - (\J\ast \psi_{\varepsilon}) \nabla \varphi_{\e} - (\J \ast\varphi_{\e}) \nabla \psi_{\varepsilon} ,\p_{\varepsilon}) \d t  \\
		&\quad+\int_{\tau}^T (\partial_t\psi_{\varepsilon} + \w_{\varepsilon} \cdot \nabla \varphi_{\e} + \u_{\e} \cdot\nabla \psi_{\varepsilon} - \Delta \widetilde{\mu}_{\e},\eta_{\varepsilon}) - (\w_{\varepsilon}(T),\p_{\varepsilon}(T)) - (\psi_{\varepsilon}(T),\eta_{\varepsilon}(T))\\
		&\quad +(\w_{\varepsilon}(T),\u_{\e}(T)-\u_f) + (\psi_{\varepsilon}(T),\varphi_{\e}(T)-\varphi_f) + (\w_{\varepsilon}(\tau),\p_{\varepsilon}(\tau)) + (\psi_{\varepsilon}(\tau),\eta_{\varepsilon}(\tau))  \\
		&\quad + \frac{1}{2} \|\W\|^2 - \frac{1}{2} \|\U_{\varepsilon}(\tau)\|^2, \\
		&\leq  (\W -\U_{\varepsilon}(\tau),\p_{\varepsilon}(\tau)) + \frac{1}{2} \|\W\|^2 - \frac{1}{2} \|\U_{\varepsilon}(\tau)\|^2,
		\end{align*}
		where we have used the fact that $\w_{\varepsilon}(\tau) = \W -\U_{\varepsilon}(\tau)$ (see \eqref{lin w1}) and $\psi_{\varepsilon}(\tau) = 0$. This easily implies 
		\begin{align*}
		\frac{1}{2}\|\U_{\varepsilon}(t)\|^2 + (\p_{\varepsilon}(t),\U_{\varepsilon}(t) )\leq \frac{1}{2}\|\mathrm{W}\|^2 + (\p_{\varepsilon}(t),\mathrm{W}) + \varepsilon,
		\end{align*}
		for all $\mathrm{W} \in \G_{\text{div}}$ and a.e. $t\in[0,T]$.
	\end{proof}
	
	Now we prove the convergence results used in the proof of the above theorem in terms of two lemmas. The next lemma proves the strong convergence of solution of the system \eqref{nonlin phi}-\eqref{initial conditions} with spike control to the strong solution of the system \eqref{nonlin phi}-\eqref{initial conditions} with control $\U$. 
	
	\begin{lemma}\label{lem3.12}
		Let $(\u,\varphi)$ be the unique strong solution of \eqref{nonlin phi}-\eqref{initial conditions} corresponding to the control $\U$ and $(\u^h,\varphi^h)$ be the unique strong solution of the system \eqref{nonlin phi}-\eqref{initial conditions} corresponding to the control $\U^h$. If $\U^h \rightarrow \U$ strongly in $\mathrm{L}^2(0,T;\G_{\text{div}})$, then we have the following strong convergence: $$(\u^h,\varphi^h) \rightarrow (\u,\varphi)\ \text{  in }\ (\mathrm{L}^{\infty}(0,T;\G_{\text{div}}) \cap \mathrm{L}^2(0,T;\V_{\text{div}})) \times (\mathrm{L}^{\infty}(0,T;\mathrm{V}') \cap \mathrm{L}^2(0,T;\mathrm{H})).$$
	\end{lemma}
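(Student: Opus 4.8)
The plan is to recognize this as a continuous-dependence-on-the-control statement and to reduce it to the continuous dependence estimate already recorded in Theorem~\ref{unique}. Both $(\u^h,\varphi^h)$ and $(\u,\varphi)$ are strong (hence weak) solutions of \eqref{nonlin phi}-\eqref{initial conditions} issued from the \emph{same} initial data $(\u_0,\varphi_0)$ and the \emph{same} external forcing $\h$; they differ only through the controls $\U^h$ and $\U$, which enter the velocity equation additively. Thus, in the notation of Theorem~\ref{unique}, we may take $\h_1=\h+\U$ and $\h_2=\h+\U^h$, so that the forcing difference is exactly $\h_2-\h_1=\U^h-\U$, while the two solutions share the same initial velocity and concentration.

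With these identifications, the continuous dependence estimate of Theorem~\ref{unique} specializes, for every $t\in[0,T]$, to
\begin{align*}
\|\u^h(t)-\u(t)\|^2+\|\varphi^h(t)-\varphi(t)\|_{\mathrm V'}^2
&+\int_0^t\left(\frac{C_0}{2}\|\varphi^h(s)-\varphi(s)\|^2+\frac{\nu}{4}\|\nabla(\u^h(s)-\u(s))\|^2\right)\d s\\
&\leq \|\U^h-\U\|_{\mathrm L^2(0,T;\V_{\text{div}}')}^2\,\Lambda_2(t),
\end{align*}
since the initial-data terms and the mean-value term both drop out (the initial data coincide). Because $\U^h-\U\in\mathrm L^2(0,T;\G_{\text{div}})$ and $\G_{\text{div}}\hookrightarrow\V_{\text{div}}'$ continuously, we bound the right-hand side by $C\,\|\U^h-\U\|_{\mathrm L^2(0,T;\G_{\text{div}})}^2\,\Lambda_2(t)$. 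Taking the supremum over $t\in[0,T]$ on the left then controls $\u^h-\u$ in $\mathrm L^\infty(0,T;\G_{\text{div}})\cap\mathrm L^2(0,T;\V_{\text{div}})$ and $\varphi^h-\varphi$ in $\mathrm L^\infty(0,T;\mathrm V')\cap\mathrm L^2(0,T;\mathrm H)$ by $C\,\|\U^h-\U\|_{\mathrm L^2(0,T;\G_{\text{div}})}^2$, which are precisely the four spaces in the statement.

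The hard part is not the abstract inequality but the uniformity of the constants as $h\to0$: the multiplier $\Lambda_2(t)$ (and the function $\mathbb Q$) in Theorem~\ref{unique} depend on the norms of \emph{both} solutions. I would first note that $\U^h\to\U$ in $\mathrm L^2(0,T;\G_{\text{div}})$ forces $\sup_h\|\U^h\|_{\mathrm L^2(0,T;\G_{\text{div}})}<\infty$, so that the energy estimate of Theorem~\ref{exist} (see Remark~\ref{rem2.5}) provides bounds on the relevant norms of $(\u^h,\varphi^h)$ that are independent of $h$. Consequently $\Lambda_2(t)$ stays bounded on $[0,T]$ uniformly in $h$, and since $\|\U^h-\U\|_{\mathrm L^2(0,T;\G_{\text{div}})}\to0$ the whole right-hand side tends to $0$, giving the asserted strong convergence. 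As a self-contained alternative, one can derive the displayed estimate directly: form the difference equations for $\v:=\u^h-\u$ and $\chi:=\varphi^h-\varphi$, test the velocity equation with $\v$ and the concentration equation with $\mathcal B^{-1}\chi$ (legitimate since mass conservation gives $\overline{\chi}=0$) to produce the $\mathrm V'$-norm of $\chi$, estimate the transport and Korteweg-type terms by H\"older, Ladyzhenskaya and Young inequalities exactly as in the proof of Theorem~\ref{linearized}, and close the argument with Gronwall's inequality.
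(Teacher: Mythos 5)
Your proposal is correct and follows essentially the same route as the paper: both reduce the lemma to the continuous dependence estimate of Theorem~\ref{unique} by absorbing the controls into the external forcing (so the forcing difference is exactly $\U^h-\U$ and the initial-data terms vanish), and both then argue that the constant multiplying $\|\U^h-\U\|^2_{\mathrm{L}^2(0,T;\G_{\text{div}})}$ is uniform in $h$ because the solution norms entering $\Lambda_2$ are controlled by energy estimates with uniformly bounded control norms. Your uniformity argument via Remark~\ref{rem2.5} is in fact stated a bit more cleanly than the paper's (which carries out the bound only for the spike variations), but it is the same idea.
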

	\begin{proof}
		From the uniqueness theorem (see Theorem \ref{unique}),  we know that 
		\begin{align*}
		&\| \u^h(t) - \u(t) \|^2 + \| \varphi^h(t) - \varphi(t) \|^2_{\mathrm{V}'} 
		+ \int_0^t \left( \frac{C_0}{2} \| \varphi^h(s) - \varphi(s) \|^2 + \frac{\nu}{4} \|\nabla( \u^h(s) - \u(s)) \|^2 \right) \d s \\
		&\quad \leq C\int_0^t\| \U^h (s)- \U(s) \|^2\d s,
		\end{align*}
		where $C$ is a generic constant depending on the various norms of the solutions $(\u^h,\varphi^h)$, $(\u,\varphi)$ and the control $\upchi_{(\tau-h,\tau]}(t)(\W - \U(t))$. We can show that $C$ doesn't depent on $h$. For that let us estimate the following terms.
$$\int_0^T \|\upchi_{(\tau-h,\tau]}(t)(\W - \U(t))\|^2 \d t \leq \int_0^T \|(\W - \U(t))\|^2 \d t.$$	
\begin{align*}
\int_0^T \|\varphi^h(t)\|^2 \d t \leq \int_0^T \|\U^h(t)\|^2 \d t &\leq \int_{\tau}^{\tau-h} \|\W\|^2 \d t + \int_0^{\tau}\|\U(t)\|^2 \d t + \int_{\tau}^T \|\U(t)\|^2 \d t \\
& \qquad \leq h \|\W\|^2  + \int_0^{\tau}\|\U(t)\|^2 \d t + \int_{\tau}^T \|\U(t)\|^2 \d t \\
& \qquad\leq \tau \|\W\|^2  + \int_0^T\|\U(t)\|^2 \d t + \int_0^T \|\U(t)\|^2 \d t .
\end{align*}
		Similar as above we can show that $C$ does not depend on $h$. If $\U^h \rightarrow \U$ strongly in $\mathrm{L}^2(0,T;\G_{\text{div}})$, from the above relation, it can be easily seen that $\u^h \rightarrow \u$ strongly in $\mathrm{L}^{\infty}(0,T;\G_{\text{div}}) \cap \mathrm{L}^2(0,T;\V_{\text{div}})$ and $\varphi^h \rightarrow \varphi$ strongly in $\mathrm{L}^{\infty}(0,T;\mathrm{V}') \cap \mathrm{L}^2(0,T;\mathrm{H}).$ Hence, we have $\u^h\to \u$ uniformly in $\G_{\text{div }}$ for all $0\leq t\leq T$ and $\varphi^h\to\varphi$ uniformly in $\mathrm{V}'$ for all $0\leq t\leq T$. 
	\end{proof}
	
	\begin{remark}\label{rem3.15}
		By using Theorem 6, \cite{unique}, one can also show that 
		if $\U^h \rightarrow \U$ strongly in $\mathrm{L}^2(0,T;\G_{\text{div}})$, then we also have the following strong convergence: $$(\u^h,\varphi^h) \rightarrow (\u,\varphi)\ \text{  in }\ (\mathrm{L}^{\infty}(0,T;\G_{\text{div}}) \cap \mathrm{L}^2(0,T;\V_{\text{div}})) \times (\mathrm{L}^{\infty}(0,T;\mathrm{H}) \cap \mathrm{L}^2(0,T;\mathrm{V})).$$ Since from Theorem 6, \cite{unique}, we have 
		\begin{align}
		&\| \u^h(t) - \u(t) \|^2 + \| \varphi^h(t) - \varphi(t) \|^2 
		+ \int_0^t \left( \frac{C_0}{2} \| \nabla(\varphi^h(s) - \varphi(s) )\|^2 + \nu \|\nabla( \u^h(s) - \u(s)) \|^2 \right) \d s\nonumber \\
		&\quad \leq C\int_0^t\| \U^h (s)- \U(s) \|^2\d s,
		\end{align}
		for all $t\in[0,T]$, where the constant $C$ depending on various norms of $(\u^h,\varphi^h)$ and $(\u,\varphi)$ given in Theorem \ref{strongsol} and the convergence easily follows.
	\end{remark}
	
	We will now establish the convergence stated in \eqref{ep1} and \eqref{ep2}  as our next lemma. Let us define
	\begin{equation}\label{zvdef}
	\w^h := \frac{\u^h-\u}{h},\quad \psi^h := \frac{\varphi^h-\varphi}{h},
	\end{equation}
	where $(\u,\varphi)$ is the strong solution of \eqref{nonlin phi}-\eqref{initial conditions} corresponding to $\U$ and $(\u^h,\varphi^h)$ is the strong solution corresponding
	to the spike $\U^h$. We show that as $h\uparrow 0$, the respective limits $\w$ and $\varphi$ satisfy an appropriate linearized problem. Formally, one may write the system satisfied by $(\w^h$, $\psi^h)$ as  
	\begin{equation}\label{4.18a}
	\left\{
	\begin{aligned}
	\w^h_t - \nu \Delta \w^h  + (\w^h \cdot \nabla )\u + (\u \cdot \nabla )\w^h + \nabla \widetilde{\uppi}_{\w^h} &= - h(\w^h \cdot \nabla )\w^h - h\frac{\nabla a}{2} (\psi^h)^2  - \nabla a \psi^h \varphi \\&\quad- h(\J \ast \psi^h) \nabla \psi^h    -(\J\ast \psi^h) \nabla \varphi - (\J \ast \varphi) \nabla \psi^h  \\&\quad + \upchi_{(\tau-h,\tau]}(t)\frac{(\W - \U(t))}{h}, \ \text{ in } \ \Omega\times(0,T),\\
	\psi^h_t + h\w^h \cdot\nabla \psi^h + \w^h \cdot \nabla \varphi + \u \cdot\nabla \psi^h &= \Delta \mu^h, \ \text{ in } \ \Omega\times(0,T), \\ 
	\mu^h &= a \psi^h - \J\ast \psi^h + \F''(\varphi)  \psi^h,\ \text{ in } \ \Omega\times(0,T),  \\
	\text{div }\w^h &= 0, \ \text{ in } \ \Omega\times(0,T), \\
	\frac{\partial \mu^h}{\partial\mathbf{n}} = \mathbf{0}, \ \w^h&=\mathbf{0}, \ \text{ on } \ \partial \Omega \times (0,T),\\
	\w^h(0) = \mathbf{0}, \ \psi^h(0) &= 0, \ \text{ in } \ \Omega,
	\end{aligned}
	\right.
	\end{equation}
	where $\upchi_{(\tau-h,\tau]}(t)$ is the characteristic function on $\tau -h < t \leq \tau$. Also we can show that \eqref{4.18a} has a unique strong solution since $(\u^h,\varphi^h)$ and $(\u,\varphi)$ are the unique strong solutions of \eqref{nonlin phi}-\eqref{initial conditions} with controls  $\U^h$ and $\U$ respectively. For that it is enough if we show the r.h.s. of \eqref{4.18a} is in $\mathrm{L}^2(0,T;\G_{\text{div}})$.
	Let us estimate the following terms.  
	$$\int_0^T  \|(\w^h \cdot \nabla )\w^h\|^2 \d t \leq\int_0^T  \|\w^h\|_{\mathbb{L}^4}^2\|\nabla \w^h\|_{\mathbb{L}^4}^2 \d t \leq \frac{C}{2}\int_0^T  \|\w^h\|_{\mathbb{L}^4}^4 \d t + \frac{1}{2C}\int_0^T\|\nabla \w^h\|_{\mathbb{L}^4}^4 \d t< \infty.$$
	$$\int_0^T  \|(\J \ast \psi^h) \nabla \psi^h\|^2 \d t \leq\int_0^T \|\J\|_{\mathrm{L}^1}^2\|\psi^h\|_{\mathrm{L}^4}^2\|\nabla \psi^h\|_{\mathrm{L}^4}^2 \d t\leq \frac{C}{2}\int_0^T  \|\J\|_{\mathrm{L}^1}^4\|\psi^h\|_{\mathrm{L}^4}^4 \d t + \frac{1}{2C}\int_0^T\|\nabla \psi^h\|_{\mathrm{L}^4}^4 \d t< \infty$$
	$$\int_0^T \|\upchi_{(\tau-h,\tau]}(t)\frac{(\W - \U(t))}{h}\|^2 \d t \leq \int_\tau^{\tau -h} \|\frac{(\W - \U(t))}{h}\|^2 \d t \leq \frac{2}{h} \|\W\|^2 + \frac{2}{h^2} \|\U\|_{\mathrm{L}^2(0,T;\G_{\text{div}})} < \infty.$$
All the above terms are finite since $(\w^h,\psi^h)$ belongs to $(\mathrm{L}^{\infty}(0,T;\G_{\text{div}})\cap\mathrm{L}^2(0,T;\V_{\text{div}}))\times (\mathrm{L}^{\infty}(0,T;\mathrm{V})\cap \mathrm{L}^2(0,T;\mathrm{H}^2)).$	
	In the lemma \eqref{lem3.13} we show that taking the limit $h\uparrow 0$, we arrive at  
	\begin{eqnarray}\label{lin w1}
	\left\{
	\begin{aligned}
	\w_t - \nu \Delta \w + (\w \cdot \nabla )\u + (\u \cdot \nabla )\w + \nabla \widetilde{\uppi}_\w &= -\nabla a\psi {\varphi} -(\J\ast \psi) \nabla{\varphi} - (\J \ast{\varphi}) \nabla \psi \\
	&\quad +(\W -\U)\delta(t- \tau),\ \text{  in } \  \Omega \times (0,T), \\
	\psi_t + \w\cdot \nabla {\varphi} +{\u} \cdot\nabla \psi &= \Delta \widetilde{\mu},\ \text{  in } \  \Omega \times (0,T), \\ 
	\widetilde{\mu} &= a \psi - \J\ast \psi + \F''({\varphi})\psi, \\
	\text{div }\w &= 0,\ \text{  in } \  \Omega \times (0,T), \\
	\frac{\partial \widetilde{\mu}}{\partial\mathbf{n}} &= \mathbf{0}, \ \w=\mathbf{0},  \ \text{ on } \ \partial \Omega \times (0,T),  \\
	\w(0) &= \mathbf{0}, \ \psi(0) = 0, \ \text{ in } \ \Omega,
	\end{aligned}
	\right. 
	\end{eqnarray}
	where the singular term involving the Dirac delta $\delta(\cdot)$ corresponds to a jump due to the spike variation. Since the existence of a weak solution of the  limiting system \eqref{lin w1} is known, one can cast it in an abstract semigroup framework, where the singular term instead contributes as an initial data 
	at time $\tau$. We explore these observations  in the following theorem by taking ideas and methodology from \cite{HOF,HFSS,SDMTS}.

	\begin{lemma}\label{lem3.13}
		Let $(\u,\varphi)$ be the solution of the system \eqref{nonlin phi}-\eqref{initial conditions} corresponding to the control $\U$ and $(\u^h,\varphi^h)$ is the solution of \eqref{nonlin phi}-\eqref{initial conditions} corresponding to the spike control $\U^h$. Let $\tau \in (0,T]$ be a left lebegue point of the function $(\W -\U(t))$. Define $\w^h = \frac{\u^h - \u}{h}$ and $\psi^h = \frac{\varphi^h - \varphi}{h}$. Then $\w^h \rightarrow \w$ and $\psi^h \rightarrow \psi$ uniformly in time $\tau \leq t \leq T$ and strongly with respect to $\G_{\text{div}} \times \mathrm{H}$ convergence over $\Omega$. The limit $(\w,\psi)$ satisfies the following system,
		\begin{equation}\label{3.24}
		\begin{pmatrix}
		\w \\ 
		\psi
		\end{pmatrix}(t) =\begin{cases}
		(\mathbf{0},0)^{\top}, & \text{if $t \in 0 \leq t < \tau$},\\
		\mathcal{S}(t,\tau) \mathcal{C}_0(\tau), & \text{ if $\tau \leq t \leq T$},
		\end{cases}
		\end{equation}
		with
		\begin{equation*} 
		\mathcal{C}_0(\tau) = \begin{pmatrix}
		(\W - \U(\tau))\\ 0
		\end{pmatrix}
		\end{equation*}  
		where
		$\mathcal{S}(t,\tau)$ is the evolution operator of the system
		\begin{equation}\label{3.25}
		\left\{
		\begin{aligned}
		\w_t - \nu \Delta \w + (\w \cdot \nabla ){\u} + ({\u} \cdot \nabla )\w + \nabla \widetilde{\uppi}_\w &= -\nabla a\psi {\varphi} -(\J\ast \psi) \nabla {\varphi} - (\J \ast {\varphi}) \nabla \psi  \ \text{  in } \  \Omega \times (0,T), \\
		\psi_t + \w\cdot \nabla {\varphi} + {\u} \cdot\nabla \psi &= \Delta \widetilde{\mu}\ \text{  in } \  \Omega \times (0,T), \\ 
		\widetilde{\mu} &= a \psi - \J\ast \psi + \F''({\varphi})\psi . 
		\end{aligned}
		\right. 
		\end{equation}
	\end{lemma}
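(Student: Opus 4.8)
The plan is to exploit the two--stage structure of the spike perturbation. On $[0,\tau-h)$ the controls $\U^h$ and $\U$ agree, so by the uniqueness part of Theorem \ref{strongsol} the strong solutions coincide and hence $\w^h\equiv\mathbf 0$, $\psi^h\equiv 0$ there. On the short interval $(\tau-h,\tau]$ the concentrated forcing $\upchi_{(\tau-h,\tau]}(t)\frac{\W-\U(t)}{h}$ in \eqref{4.18a} builds up the jump, and on $[\tau,T]$ the forcing switches off and $(\w^h,\psi^h)$ evolves by the homogeneous linearised dynamics \eqref{3.25} modulo the $O(h)$ quadratic remainder on the right--hand side of \eqref{4.18a}. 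First I would set up the evolution operator $\mathcal S(t,s)$ of \eqref{3.25}, then write \eqref{4.18a} in mild (Duhamel) form, and finally let $h\downarrow 0$, showing that the concentrated forcing converges to the impulse $\mathcal C_0(\tau)=(\W-\U(\tau),0)^{\top}$ while the quadratic remainder disappears.

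To construct $\mathcal S(t,s)$ I would invoke Theorem \ref{linearized}: for every initial time $s\in[0,T)$ and every datum in $\G_{\text{div}}\times\mathrm H$, the system \eqref{3.25} started at time $s$ has a unique weak solution in the class of Theorem \ref{linearized}, and the Gronwall bound obtained there (cf.\ \eqref{l48}) is uniform in $s$ since it depends only on the fixed strong solution $(\widehat\u,\widehat\varphi)=(\u,\varphi)$. This yields a strongly continuous, uniformly bounded two--parameter family $\mathcal S(t,s)$ on $\G_{\text{div}}\times\mathrm H$ with $\mathcal S(s,s)=\mathrm I$. With $\mathcal N^h$ denoting the quadratic terms $(\w^h\cdot\nabla)\w^h$, $\tfrac{\nabla a}{2}(\psi^h)^2$, $(\J\ast\psi^h)\nabla\psi^h$, $\w^h\cdot\nabla\psi^h$ of \eqref{4.18a}, the mild formula reads, for $t\ge\tau$,
\begin{align*}
\begin{pmatrix}\w^h\\\psi^h\end{pmatrix}(t)=\mathcal S(t,\tau)\,\mathcal C_0^h+\int_\tau^t\mathcal S(t,s)\,h\,\mathcal N^h(s)\,\d s,
\end{align*}
with the initial value at $\tau$ given by
\begin{align*}
\mathcal C_0^h:=\frac1h\int_{\tau-h}^{\tau}\mathcal S(\tau,s)\begin{pmatrix}\W-\U(s)\\0\end{pmatrix}\d s+\mathcal R^h,\qquad \mathcal R^h:=\int_{\tau-h}^{\tau}\mathcal S(\tau,s)\,h\,\mathcal N^h(s)\,\d s.
\end{align*}

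The convergence of the jump data is where the Lebesgue--point hypothesis enters. Writing $\mathcal S(\tau,s)(\W-\U(s),0)^{\top}=\mathcal S(\tau,s)(\W-\U(\tau),0)^{\top}+\mathcal S(\tau,s)(\U(\tau)-\U(s),0)^{\top}$, I would bound
\begin{align*}
\left\|\mathcal C_0^h-\begin{pmatrix}\W-\U(\tau)\\0\end{pmatrix}\right\|&\le\left\|\frac1h\int_{\tau-h}^{\tau}\mathcal S(\tau,s)\begin{pmatrix}\W-\U(\tau)\\0\end{pmatrix}\d s-\begin{pmatrix}\W-\U(\tau)\\0\end{pmatrix}\right\|\\&\quad+\sup_{s}\|\mathcal S(\tau,s)\|\,\frac1h\int_{\tau-h}^{\tau}\|\U(\tau)-\U(s)\|\,\d s+\|\mathcal R^h\|.
\end{align*}
As $h\downarrow 0$ the first term vanishes by strong continuity of $s\mapsto\mathcal S(\tau,s)$ at $s=\tau$ (using $\mathcal S(\tau,\tau)=\mathrm I$), and the second by the defining property of the left Lebesgue point $\tau$; hence $\mathcal C_0^h\to\mathcal C_0(\tau)$ provided $\mathcal R^h\to 0$.

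The main obstacle, and the technical heart, is the uniform--in--$h$ a priori bound needed to annihilate the quadratic remainder: the continuous dependence estimate of Lemma \ref{lem3.12} (and Remark \ref{rem3.15}) only yields $\|\w^h(t)\|=O(h^{-1/2})$ on the spike interval, far from the sharp $O(1)$ behaviour dictated by the impulse structure. To recover the sharp bound I would argue on $[\tau,T]$: once $\|\w^h(\tau)\|,\|\psi^h(\tau)\|$ are known to be $O(1)$, the linearised energy estimates of Theorem \ref{linearized} applied to \eqref{4.18a}, with the extra $h\,\mathcal N^h$ absorbed into the dissipation for $h$ small (using \eqref{gu1}, \eqref{agm} and the strong--solution regularity \eqref{ues}), furnish a bound for $(\w^h,\psi^h)$ in $\mathrm L^\infty(\tau,T;\G_{\text{div}}\times\mathrm H)\cap\mathrm L^2(\tau,T;\V_{\text{div}}\times\mathrm V)$ independent of $h$; consequently $h\,\mathcal N^h\to 0$ in $\mathrm L^1(\tau,T;\G_{\text{div}}\times\mathrm H)$ and the Duhamel integral tends to zero uniformly. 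A parallel mild analysis on $(\tau-h,\tau]$ simultaneously yields $\mathcal R^h\to 0$ and the $O(1)$ bound for $(\w^h(\tau),\psi^h(\tau))$ that closes the loop. Combining the three steps gives $(\w^h,\psi^h)\to\mathcal S(\cdot,\tau)\mathcal C_0(\tau)$ uniformly on $[\tau,T]$ in $\G_{\text{div}}\times\mathrm H$, which is precisely \eqref{3.24}; the case $0\le t<\tau$ is immediate from the first step, and uniqueness of the limit follows from the linearity of \eqref{3.25} via Theorem \ref{linearized}.
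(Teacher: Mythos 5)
Your proposal shares its skeleton with the paper's proof: both rest on the evolution operator $\mathcal{S}(t,s)$ of the linearized system, both write $(\w^h,\psi^h)$ in Duhamel (mild) form, both observe that the solutions coincide for $t<\tau-h$, and both split the error into an averaged-impulse term killed by the left Lebesgue point property, a term killed by strong continuity of $\mathcal{S}$, and the quadratic remainder carrying the factor $h$; your regrouping through time $\tau$ via $\mathcal{S}(t,s)=\mathcal{S}(t,\tau)\mathcal{S}(\tau,s)$ is only a cosmetic variant of the paper's direct estimate of $\y(t,h)$. The genuine divergence is in the remainder, your $h\,\mathcal{N}^h$, the paper's $\mathcal{F}_h$. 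The paper bounds $\|\mathcal{F}_h(s)\|_{\V_{\text{div}}'\times\mathrm{V}'}\leq Ch\left(\|\w^h\|_{\V_{\text{div}}}^2+\|\nabla\psi^h\|^2+\|\psi^h\|^2\right)$, reduces the resulting integral to $\frac{C}{h}\int_{\tau-h}^{\tau}\|\U^h(s)-\U(s)\|^2\d s$ via the continuous dependence estimates of Lemma \ref{lem3.12} and Remark \ref{rem3.15}, and then disposes of it using the pointwise bound \eqref{uval}, $\|\U^h(t)-\U(t)\|\leq Ch$. You instead propose a uniform-in-$h$ a priori bound for $(\w^h,\psi^h)$ in $\mathrm{L}^{\infty}(\G_{\text{div}}\times\mathrm{H})\cap\mathrm{L}^{2}(\V_{\text{div}}\times\mathrm{V})$ (impulse gives $O(1)$ data at $\tau$, then absorption of $h\,\mathcal{N}^h$ into the dissipation on $[\tau,T]$), after which $h\,\mathcal{N}^h\to 0$ in $\mathrm{L}^1(\V_{\text{div}}'\times\mathrm{V}')$ is immediate. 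This buys something real: the Lebesgue point hypothesis controls averages such as $\frac{1}{h}\int_{\tau-h}^{\tau}\|\U(\tau)-\U(s)\|\d s$, not $\sup_{(\tau-h,\tau]}\|\W-\U(t)\|$, so your route never needs a pointwise-in-time smallness statement like \eqref{uval}, which is delicate since the spike $\W-\U(t)$ need not be pointwise small; conversely, it concentrates all the work in the bootstrap, which your sketch does not carry out. Two points of care if you flesh it out: on the spike interval the forcing $\frac{1}{h}(\W-\U(t))$ must enter Gronwall in its linear form (estimate $\frac{\d}{\d t}\|\w^h\|$, or argue in mild form), since the usual quadratic Cauchy--Schwarz step reproduces exactly the $O(h^{-1/2})$ bound you are trying to beat; and closing the bootstrap requires a continuity-in-time argument in which the cubic terms generated by $h\,\mathcal{N}^h$ in the energy identities, such as $h\left(\tfrac{\nabla a}{2}(\psi^h)^2,\w^h\right)$ and $h\left((\J\ast\psi^h)\nabla\psi^h,\w^h\right)$, are absorbed for small $h$ --- alternatively these can be tamed by combining the explicit factor $h$ with the crude $O(h^{-1/2})$ bounds already supplied by Lemma \ref{lem3.12}, which avoids a full bootstrap.
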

	\begin{proof}
		Let us define a bilinear operator $\mathcal{B}(\w,\psi) = (\mathcal{B}_1(\w,\psi),\mathcal{B}_2(\w,\psi)),$ where
		\begin{align*}
		\mathcal{B}_1(\w,\psi) &=   \nu \Delta \w - (\w \cdot \nabla )\u- (\u \cdot \nabla )\w - \nabla \widetilde{\uppi}_{\w}  - \nabla a \psi \varphi  -(\J\ast \psi) \nabla \varphi - (\J \ast \varphi ) \nabla \psi, \\
		\mathcal{B}_2(\w,\psi) &= -\w \cdot \nabla \varphi- \u \cdot\nabla \psi + \Delta (a \psi - \J\ast \psi + \F''(\varphi)  \psi) .
		\end{align*}
		We can write the system for $(\w^h,\psi^h)$ as following:
		\begin{equation}
		\left\{
		\begin{aligned}
		\frac{\d}{\d t}\begin{pmatrix}
		\w^h \\ 
		\psi^h
		\end{pmatrix} &= \mathcal{B} \begin{pmatrix}
		\w^h \\ 
		\psi^h
		\end{pmatrix} + \mathcal{C}_h(t) + \mathcal{F}_h(t) , \\
		\begin{pmatrix}
		\w^h \\ 
		\psi^h
		\end{pmatrix}(0) &= \begin{pmatrix}
		\mathbf{0}\\ 
		0
		\end{pmatrix},
		\end{aligned}
		\right.
		\end{equation}
		where 
		\begin{align*}
		\mathcal{C}_h(t) = \begin{pmatrix} 
		\upchi_{(\tau-h,\tau]}(t)\frac{(\W - \U(t))}{h}\\ 0
		\end{pmatrix}.
		\end{align*}
		and 
		\begin{align*}
		\mathcal{F}_h(t) = \begin{pmatrix} 
		- h(\w^h(t) \cdot \nabla)\w^h(t) - h\frac{\nabla a}{2}(\psi^h(t))^2  - h(\J \ast \psi^h(t)) \nabla \psi^h(t) \\
		- h\w^h \cdot \nabla \psi^h             
		\end{pmatrix}.
		\end{align*}
		Similarly the system \eqref{3.25} for $(\w,\psi)$ over $\tau \leq t \leq T$ can be written as 
		\begin{equation}
		\left\{
		\begin{aligned}
		\frac{\d}{\d t}\begin{pmatrix}
		\w \\ 
		\psi
		\end{pmatrix} &= \mathcal{B} \begin{pmatrix}
		\w \\ 
		\psi
		\end{pmatrix} , \\
		\begin{pmatrix}
		\w \\ 
		\psi
		\end{pmatrix}(\tau) &= \mathcal{C}_0(\tau).
		\end{aligned}
		\right.
		\end{equation}
		We know that for each $(\w_0,\psi_0)\in\G_{\text{div }}\times\mathrm{H},$ a weak solution $(\w(t),\psi(t))$ of (\ref{lin w1}) exists if and only if $\mathcal{B}$ is the generator of a strongly continuous semigroup $\{\mathcal{S}(t)\}$ of bounded linear operators on $\G_{\text{div }}\times\mathrm{H}$ (see Theorem 1, \cite{JMB}). 
		Observe that for $0 \leq t < \tau-h$,  $(\w,\psi)(t) = (\mathbf{0},0)$ and for $0\leq t<\tau-h$, we have $(\w^h,\psi^h)(t) = (\mathbf{0},0)$. 
		Next let us define 
		\begin{equation}\label{y}
		\y (t,h) := \begin{pmatrix}
		\w^h \\ 
		\psi^h
		\end{pmatrix}(t) - \begin{pmatrix}
		\w \\ 
		\psi
		\end{pmatrix}(t).
		\end{equation} 
		For any $0 \leq t < \tau -h$, $\mathcal{C}_h(t) = (\mathbf{0},0)^{\top}$. It follows that for $0\leq t < \tau -h$, we have $\y(t,h)\equiv 0$, since the control is same. Letting $h\uparrow 0$, we deduce the convergence
		for $0\le t< \tau$. 
		
		Now, for $\tau-h \leq t\leq T$, we know that 
		\begin{equation}
		\left\{
		\begin{aligned}
		\frac{\d}{\d t}\y (t,h) &= \mathcal{B} \y (t,h) + \mathcal{F}_h(t)  , \\
		\y (0,h) &= \begin{pmatrix}
		\mathbf{0}\\ 
		0
		\end{pmatrix}.
		\end{aligned}
		\right.
		\end{equation}
		By definition of the evolution operator $\mathcal{S}(t,s)$, we have 
		\begin{align}\label{yh}
		\begin{pmatrix}
		\w^h \\ 
		\psi^h
		\end{pmatrix}(t) &= \mathcal{S}(t,0)\begin{pmatrix}
		\mathbf{0} \\ 
		0 
		\end{pmatrix} + \int_0^t \mathcal{S}(t,s)\mathcal{C}_h(s) \d s + \int_0^t \mathcal{S}(t,s)\mathcal{F}_h(s) \d s  
		\nonumber\\&	= \int_0^t \mathcal{S}(t,s)\mathcal{C}_h(s) \d s + \int_0^t \mathcal{S}(t,s)\mathcal{F}_h(s) \d s
		\end{align}
		and 
		\begin{equation*}
		\begin{pmatrix}
		\w \\ 
		\psi
		\end{pmatrix}(t) = \mathcal{S}(t,\tau)\mathcal{C}_0(\tau).
		\end{equation*}
		Using (\ref{y}) and (\ref{yh}), we derive
		\begin{align*}
		\y(t,h) &= \int_0^t \mathcal{S}(t,s)\mathcal{C}_h(s) \d s + \int_0^t \mathcal{S}(t,s)\mathcal{F}_h(s) \d s - \mathcal{S}(t,\tau)\mathcal{C}_0(\tau) \\
		&= \frac{1}{h} \int_{\tau -h}^{\tau} (\mathcal{S}(t,s)\mathcal{C}_0(s) - \mathcal{S}(t,\tau)\mathcal{C}_0(\tau)) \d s  + \int_{\tau-h}^t \mathcal{S}(t,s)\mathcal{F}_h(s) \d s.
		\end{align*}
		Therefore, we estimate
		\begin{align*}
		\|\y(t,h)\|_{\G_{\text{div }}\times\mathrm{H}}
		&  \leq \frac{1}{h} \int_{\tau -h}^{\tau} \|\mathcal{S}(t,s)\mathcal{C}_0(s) - \mathcal{S}(t,\tau)\mathcal{C}_0(\tau) \|_{\G_{\text{div}} \times \mathrm{H}} \d s + \int_{\tau-h}^t \|\mathcal{S}(t,s)\mathcal{F}_h(s)\|_{\G_{\text{div}} \times \mathrm{H}} \d s \\
		&  \leq \frac{1}{h} \int_{\tau -h}^{\tau} \|\mathcal{S}(t,s)(\mathcal{C}_0(s) - \mathcal{C}_0(\tau)) \|_{\G_{\text{div}} \times \mathrm{H}}  \d s  \\&\quad+ \frac{1}{h} \int_{\tau -h}^{\tau} \|(\mathcal{S}(t,s) - \mathcal{S}(t,\tau))\mathcal{C}_0(\tau)\|_{\G_{\text{div}} \times \mathrm{H}} \d s  + \int_{\tau-h}^{t} \|\mathcal{S}(t,s) \|\|\mathcal{F}_h(s)\|_{\G_{\text{div}} \times \mathrm{H}} \d s \\
		& \leq \frac{1}{h} \int_{\tau -h}^{\tau} \|\mathcal{S}(t,s) \| \|\mathcal{C}_0(s) - \mathcal{C}_0(\tau) \|_{\G_{\text{div}} \times \mathrm{H}} \d s \\&\quad + \frac{1}{h} \int_{\tau -h}^{\tau} \|(\mathcal{S}(t,s) - \mathcal{S}(t,\tau))\mathcal{C}_0(\tau)\|_{\G_{\text{div}} \times \mathrm{H}} \d s   +\int_{\tau-h}^{t} \|\mathcal{S}(t,s) \|\|\mathcal{F}_h(s)\|_{\G_{\text{div}} \times \mathrm{H}} \d s \\
		&=  \sum_{j=1}^3 I_j.
		\end{align*}
		Note that $\|\mathcal{S}(\cdot,\cdot)\|$ denotes the operator norm of $\mathcal{S}(\cdot,\cdot)$. Taking $h \uparrow 0$, $I_1$ and $I_2$ converges to zero, since $\tau$ is the left Lebesgue point of $\mathcal{C}_0$ and the evolution operator $\mathcal{S}$ is strongly continuous in $\G_{\text{div}} \times \mathrm{H}$. We now have to show that $I_3 \rightarrow 0$ as $h\uparrow 0$. Since $\G_{\text{div }}\times\mathrm{H}$ is a closed subspace of $\V_{\text{div }}'\times\mathrm{V}'$, it is enough to check
		\begin{align*}
		\lim_{h \uparrow 0}\int_{\tau-h}^{t}\|\mathcal{S}(t,s) \|\|\mathcal{F}_h(s)\|_{\V_{\text{div}'} \times \mathrm{V}'} \d s=0.
		\end{align*}
		Using the H\"older and Ladyzhenskaya inequality, we obtain 
		\begin{align}
		\|(\w^h\cdot\nabla)\w^h\|_{\V_{\text{div}}'}\leq\sqrt{2} \|\w^h\|_{\G_{\text{div }}}\|\w^h\|_{\V_{\text{div}}}\leq \sqrt{\frac{2}{\lambda_1}}\|\w^h\|_{\V_{\text{div}}}^2.
		\end{align}
		Using the H\"older inequality and Gagliardo-Nirenberg inequality (see Lemma \ref{GNI}), we deduce 
		\begin{align}\label{1}
		\left\|\frac{\nabla a}{2} (\psi^h)^2\right\|_{\V_{\text{div}}'} 	&\leq \left\|\frac{\nabla a}{2} (\psi^h)^2\right\|_{\G_{\text{div}}} \leq \frac{1}{2}\|\nabla a\|_{\mathbb{L}^{\infty}}\|(\psi^h)^2\|_{\mathrm{H}}\leq C\|\psi^h\|_{\mathrm{L}^4}^2\nonumber\\&\leq C\left(\|\nabla\psi^h\|^{1/2}\|\psi^h\|^{1/2}+\|\psi^h\|\right)^2\nonumber\\&\leq C\left(\|\nabla\psi^h\|\|\psi^h\|+\|\psi^h\|^2\right)\leq C\left(\|\nabla\psi^h\|^2+\|\psi^h\|^2\right). 
		\end{align}
		An integration by parts,  the H\"older and Poincar\'e inequalities yields  
		\begin{align}
		\|(\J\ast \psi^h) \nabla \psi^h\|_{\V_{\text{div}}'} &\leq \|\nabla\J\|_{\mathbb{L}^1}\|\psi^h \|_{\mathrm{L}^{4}}^2\leq C\left(\|\nabla\psi^h\|^2+\|\psi^h\|^2\right).
		\end{align}
		Once again using an integration by parts, H\"older, Poincar\'e and Gagliardo-Nirenberg inequalities, we get (see \cite{BDM})
		\begin{align}\label{4}
		\|\w^h \cdot \nabla \psi^h \|_{\mathrm{V}'} &\leq C\|\w^h\|_{\mathbb{L}^{4}}\|\nabla\psi^h\|_{\mathrm{L}^4}\leq C\left(\|\w^h\|_{\mathbb{L}^{4}}^2+\|\nabla\psi^h\|_{\mathrm{L}^4}^2\right)\leq C\left(\|\w^h\|_{\V_{\text{div }}}^2+\|\nabla\psi^h\|^2+\|\psi^h\|^2\right).
		\end{align}	
		Combining (\ref{1})-(\ref{4}), we get 
		\begin{align}
		& \|\mathcal{F}_h(s) \|_{\V_{\text{div }'}\times\mathrm{V}'}\leq Ch\left(\|\w^h\|_{\V_{\text{div }}}^2+\|\nabla\psi^h\|^2+\|\psi^h\|^2\right).
		\end{align}
		Using Lemma \ref{lem3.12} and remark \ref{rem3.15}, we find 
		\begin{align}\label{3.45}
	&	\int_{\tau-h}^{t}\|\mathcal{S}(t,s) \|\|\mathcal{F}_h(s)\|_{\V_{\text{div }'}\times\mathrm{V}'}\d s\nonumber\\&\leq Ch\int_0^t\|\mathcal{S}(t,s)\|\left(\|\w^h(s)\|_{\V_{\text{div }}}^2+\|\nabla\psi^h(s)\|^2+\|\psi^h(s)\|^2\right)\d s\nonumber\\&\leq \frac{C}{h}\int_0^t\left(\|\nabla(\u^h(s)-\u(s))\|^2+\|\nabla(\varphi^h(s)-\varphi(s))\|^2+\|\varphi^h(s)-\varphi(s)\|^2\right)\d s\nonumber\\&\leq  \frac{C}{h}\int_{\tau-h}^{\tau}\|\U^h(s)-\U(s)\|^2\d s.
		\end{align}
		Since $\tau$ is the left Lebesgue point of the function $\W- \U(t)$, for sufficiently small $h$, we have
		\begin{align}\label{uval}
		\|\U^h(t)-\U(t)\|\leq Ch, \ \text{ for all }\ t\in[0,T].
		\end{align}
		Using \eqref{uval} in \eqref{3.45} we get
		\begin{align*}
		\int_{\tau-h}^{t}\|\mathcal{S}(t,s) \|\|\mathcal{F}_h(s)\|_{\V_{\text{div }'}\times\mathrm{V}'}\d s \leq Ch^2.
		\end{align*} Letting $h\uparrow 0,$ we deduce the convergence of $I_3$ to zero for $\tau\leq t< T$.  This completes the proof.
	\end{proof} 

\begin{remark}
	In the approximation of $\mu^h$ in \eqref{4.18a}, we used the Taylor series expansion for $0<\theta<1$ to get 
	\begin{align*}
	\frac{1}{h}[\mathrm{F}'(\varphi^h)-\mathrm{F}'(\varphi)]&=\frac{1}{h}[\mathrm{F}'(\varphi^h-\varphi+\varphi)-\mathrm{F}'(\varphi)]\nonumber\\&=\frac{1}{h}\left[\mathrm{F}'(\varphi)+(\varphi^h-\varphi)\mathrm{F}''(\varphi)+\frac{1}{2}(\varphi^h-\varphi)^2\mathrm{F}'''(\varphi+\theta(\varphi^h-\varphi))-\mathrm{F}'(\varphi)\right]\\&=\psi^h\mathrm{F}''(\varphi)+\frac{h}{2}(\psi^h)^2\mathrm{F}'''(\theta\varphi^h+(1-\theta)\varphi).
	\end{align*}
	Since $\mathrm{F}(\cdot)$ has a polynomial growth and $\varphi,\varphi^h\in\mathrm{L}^{\infty}(\Omega)$ for all $t\in[0,T]$, one can estimate the second term in the above equality as 
	\begin{align}\label{3.46}
	&\frac{h}{2}\int_{\tau-h}^{t}\|(\psi^h(s))^2\mathrm{F}'''(\theta\varphi^h(s)+(1-\theta)\varphi(s))\|\d s\nonumber\\& \leq \frac{h}{2}\int_{\tau-h}^{t}\|\mathrm{F}'''(\theta\varphi^h(s)+(1-\theta)\varphi(s))\|_{\mathrm{L}^{\infty}}\|(\psi^h(s))^2\|\d s\nonumber\\&\leq \frac{h}{2}\sup_{s\in[\tau-h,t]}\|\mathrm{F}'''(\theta\varphi^h(s)+(1-\theta)\varphi(s))\|_{\mathrm{L}^{\infty}}\int_{\tau-h}^{t}\|\psi^h(s)\|_{\mathrm{L}^4}^2\d s\nonumber\\&\leq \frac{C}{h}\sup_{s\in[0,T]}\|\Psi(\theta\varphi^h(s)+(1-\theta)\varphi(s))\|_{\mathrm{L}^{\infty}}\int_{\tau-h}^{t}\left(\|\varphi^h(s)-\varphi(s)\|^2+\|\nabla(\varphi^h(s)-\varphi(s))\|^2\right)\d s\nonumber\\&\leq \frac{CM(h)}{h}\int_{\tau-h}^{\tau}\|\U^h(s)-\U(s)\|^2\d s\leq CM(h)h^2,
	\end{align}
	by using \eqref{uval}. Here $\Psi(\cdot)$ is a polynomial of degree $r>1$ and note that 
	\begin{align*}
	\sup_{t\in[0,T]}\|\Psi(\theta\varphi^h(t)+(1-\theta)\varphi(t))\|_{\mathrm{L}^{\infty}}&\leq C(r) \sup_{t\in[0,T]}\left(\|\varphi^h(t)\|_{\mathrm{L}^{\infty}}^r+\|\varphi(t)\|_{\mathrm{L}^{\infty}}^r\right)\\&\leq C\left(\|\varphi_0\|_{\mathrm{H}^{2}},\|\u_0\|_{\V_{\text{div }}},\|\U\|_{\mathrm{L}^2(0,T;\mathbb{G}_{\text{div}})},\|\U^h\|_{\mathrm{L}^2(0,T;\mathbb{G}_{\text{div}})}\right)\\&=:M(h),
	\end{align*}
	and $\lim\limits_{h\uparrow 0}M(h)=C\left(\|\varphi_0\|_{\mathrm{H}^{2}},\|\u_0\|_{\V_{\text{div }}},\|\U\|_{\mathrm{L}^2(0,T;\mathbb{G}_{\text{div}})}\right)=:M,$ since $\U^h\to\U$ strongly in $\mathrm{L}^2(0,T;\mathbb{G}_{\text{div}})$ as $h\to 0$. Thus passing limit as $h\uparrow 0$ in \eqref{3.46}, we deduce the required convergence. 
\end{remark}
	
	\begin{remark}\label{rem 3.11}
		By virtue of Ekeland's variational principle, Theorem \ref{3.10} can be proved for more general problems where the existence of minimizer is not known or not needed. In the case of the cost functional which we are considering (see \eqref{cost}), a minimizer  of $\mathcal{J}$ exists (see Theorem \ref{optimal} below) and therefore Theorem \ref{main} holds true. 
	\end{remark}

	In order to complete the proof of Theorem \ref{main}, it remains to show the existence of optimal control $(\u^*,\varphi^*,\U^*)$. This is the content of the next theorem.	
	
	\begin{theorem}[Existence of an Optimal Triplet]\label{optimal}
		Let the Assumption \ref{prop of F and J} along with the condition (\ref{fes}) holds true and the initial data $(\u_0,\varphi_0)$ satisfying (\ref{initial}) be given. Then there exists at least one triplet  $(\u^*,\varphi^*,\U^*)\in\mathscr{A}_{\text{ad}}$  such that the functional $ \mathcal{J}(\u,\varphi,\U)$ attains its minimum at $(\u^*,\varphi^*,\U^*)$, where $(\u^*,\varphi^*)$ is the unique strong solution of \eqref{nonlin phi}-\eqref{initial conditions}  with the control $\U^*$.
	\end{theorem}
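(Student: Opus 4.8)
The plan is to apply the direct method of the calculus of variations. Since the cost functional $\mathcal{J}$ defined in \eqref{cost} is a sum of squared norms, it is nonnegative and hence bounded below, so $m:=\inf_{(\u,\varphi,\U)\in\mathscr{A}_{\text{ad}}}\mathcal{J}(\u,\varphi,\U)$ is finite. First I would select a minimizing sequence $(\u^n,\varphi^n,\U^n)\in\mathscr{A}_{\text{ad}}$ with $\mathcal{J}(\u^n,\varphi^n,\U^n)\to m$. From the control term $\frac{1}{2}\int_0^T\|\U^n(t)\|^2\d t$ in the cost, the sequence $\{\U^n\}$ is bounded in $\mathrm{L}^2(0,T;\G_{\text{div}})$, so by the Banach--Alaoglu theorem there is a subsequence (not relabelled) with $\U^n\rightharpoonup\U^*$ weakly in $\mathrm{L}^2(0,T;\G_{\text{div}})$.

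Next I would derive uniform a priori bounds for the corresponding states. Because the initial data $(\u_0,\varphi_0)$ satisfying \eqref{initial} are fixed and $\{\U^n\}$ is bounded in $\mathrm{L}^2(0,T;\G_{\text{div}})$, Theorem \ref{strongsol} together with the energy estimates of Theorem \ref{exist} and Remark \ref{rem2.5} furnishes bounds, uniform in $n$, for $(\u^n,\varphi^n)$ in the strong solution class \eqref{sol1}--\eqref{0.24}; in particular $\u^n$ is bounded in $\mathrm{L}^\infty(0,T;\V_{\text{div}})\cap\mathrm{L}^2(0,T;\H^2)$ with $\u^n_t$ bounded in $\mathrm{L}^2(0,T;\G_{\text{div}})$, and $\varphi^n$ is bounded in $\mathrm{L}^\infty(0,T;\mathrm{H}^2)$ with $\varphi^n_t$ bounded in $\mathrm{L}^2(0,T;\mathrm{H})$. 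Invoking Banach--Alaoglu once more, and crucially the Aubin--Lions compactness theorem, I would pass to a further subsequence such that $\u^n\to\u^*$ strongly in $\mathrm{L}^2(0,T;\V_{\text{div}})$ and $\varphi^n\to\varphi^*$ strongly in $\mathrm{L}^2(0,T;\mathrm{V})$, with the expected weak and weak-$*$ limits in the strong solution spaces; moreover, the uniform bound on $\u^n_t$ together with the compact embedding $\V_{\text{div}}\hookrightarrow\G_{\text{div}}$ yields $\u^n\to\u^*$ in $\C([0,T];\G_{\text{div}})$, and similarly $\varphi^n(T)\to\varphi^*(T)$ in $\mathrm{H}$, which will handle the terminal cost terms.

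The main obstacle, and the heart of the proof, is to pass to the limit in the nonlinear terms of the weak formulation so as to identify $(\u^*,\varphi^*)$ as the unique strong solution of \eqref{nonlin phi}--\eqref{initial conditions} driven by $\U^*$, thereby ensuring $(\u^*,\varphi^*,\U^*)\in\mathscr{A}_{\text{ad}}$. The delicate terms are the convective term $(\u^n\cdot\nabla)\u^n$, the Korteweg-type forcing $(\J\ast\varphi^n)\nabla\varphi^n$ and $\nabla a\,(\varphi^n)^2/2$, and the nonlinearity $\F'(\varphi^n)$. For each of these I would combine the strong $\mathrm{L}^2$ convergence of $\u^n,\varphi^n$ (and of their gradients) with the uniform higher-order bounds and the continuity and growth properties of $\F$ from Assumption \ref{prop of F and J} and Remark \ref{remark F}; for instance $b(\u^n,\u^n,\v)\to b(\u^*,\u^*,\v)$ follows from the strong convergence in $\mathrm{L}^2(0,T;\V_{\text{div}})$, while $\F'(\varphi^n)\to\F'(\varphi^*)$ follows from the strong convergence of $\varphi^n$ together with the uniform $\mathrm{L}^\infty$ bound on $\varphi^n$. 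The linear terms and the time derivatives pass to the limit routinely by weak convergence.

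Finally, having established $(\u^*,\varphi^*,\U^*)\in\mathscr{A}_{\text{ad}}$, I would invoke the weak (and weak-$*$) lower semicontinuity of the convex, continuous functional $\mathcal{J}$: each squared-norm term is weakly lower semicontinuous, and using the convergence of the terminal values obtained above one gets
\[
\mathcal{J}(\u^*,\varphi^*,\U^*)\leq\liminf_{n\to\infty}\mathcal{J}(\u^n,\varphi^n,\U^n)=m.
\]
Since $(\u^*,\varphi^*,\U^*)\in\mathscr{A}_{\text{ad}}$ forces $\mathcal{J}(\u^*,\varphi^*,\U^*)\geq m$, equality holds and the minimum is attained at $(\u^*,\varphi^*,\U^*)$, with $(\u^*,\varphi^*)$ the unique strong solution associated to $\U^*$ by Theorem \ref{strongsol}. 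This completes the plan.
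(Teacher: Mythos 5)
Your proposal follows essentially the same route as the paper's proof: a minimizing sequence, uniform bounds obtained from the control term in the cost together with the strong-solution estimates, Banach--Alaoglu plus Aubin--Lions compactness to identify the limit $(\u^*,\varphi^*,\U^*)$ as an admissible strong-solution triplet, and weak lower semicontinuity of $\mathcal{J}$ to conclude that the infimum is attained. If anything, your explicit uniform-in-time convergences giving $\u^n(T)\to\u^*(T)$ and $\varphi^n(T)\to\varphi^*(T)$ handle the terminal cost terms more carefully than the paper, which absorbs them into its lower-semicontinuity claim.
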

	
	\begin{proof}
		\textbf{Claim (1):} \emph{$\mathscr{A}_{\text{ad}}$ is nonempty.}
		If the control $\U=\textbf{0}$, then by the existence and uniqueness theorem (see Theorems \ref{exist} and \ref{unique}), a  unique weak solution $(\u,\varphi)$ exists. Since the condition (\ref{fes}) holds true and the initial data $(\u_0,\varphi_0)$ satisfies (\ref{initial}), the unique weak solution we obtained is also a strong solution. Hence, $\mathcal{J}(\u,\varphi,\textbf{0})<+\infty$ exists and $(\u,\varphi,\mathbf{0})$ belongs to $\mathscr{A}_{\text{ad}}$. Therefore the set $\mathscr{A}_{\text{ad}}$ is nonempty. 
		
		\noindent\textbf{Claim (2):} \emph{The existence of an optimal triplet $(\u^*,\varphi^*,\U^*)\in\mathscr{A}_{\text{ad}}$.}	
		Let us define $$\mathscr{J} := \inf \limits _{\U \in \mathscr{A}_{\text{ad}}}\mathcal{J}(\u,\varphi,\U).$$
		Since, $0\leq \mathscr{J} < +\infty$, there exists a minimizing sequence $\{\U_n\} \in \mathscr{U}_{\text{ad}}$ such that $$\lim_{n\to\infty}\mathcal{J}(\u_n,\varphi_n,\U_n) = \mathscr{J},$$ where $(\u_n,\varphi_n)$ is the unique strong solution of \eqref{nonlin phi}-\eqref{initial conditions} with the control $\U_n$ and 
		\begin{align}\label{initial1}
		\u_n(0)=	\u_0\in\V_{\text{div}}\ \text{ and }\ \varphi_n(0)=\varphi_0\in \mathrm{H}^2.
		\end{align}
		Without loss of generality, we assume that $\mathcal{J}(\u_n,\varphi_n,\U_n) \leq \mathcal{J}(\u,\varphi,\mathbf{0})$, where $(\u,\varphi,\mathbf{0})\in\mathscr{A}_{\text{ad}}$. From the definition of $\mathcal{J}(\cdot,\cdot,\cdot)$, this gives
		\begin{align}\label{bound}
		&\frac{1}{2} \int_0^T \|\nabla  (\u_n(t)-\u_d(t))\|^2 \d t+ \frac{1}{2} \int_0^T \|\varphi_n(t) -\varphi_d(t)\|^2 \d t+ \frac{1}{2} \int_0^T\|\U_n(t)\|^2\d t\nonumber\\& \leq \frac{1}{2} \int_0^T \|\nabla  (\u(t)-\u_d(t))\|^2\d t + \frac{1}{2} \int_0^T \|\varphi(t) -\varphi_d(t)\|^2\d t . \end{align}
		Since $\u,\u_d \in \mathrm{L}^{\infty}(0,T;\V_{\text{div}})$ and $\varphi,\varphi_d \in \mathrm{L}^{\infty}(0,T;\mathrm{H})$, from the above relation, it is clear that, there exist a $K>0$, large enough such that
		$$0 \leq \mathcal{J}(\u_n,\varphi_n,\U_n) \leq K < +\infty.$$
		In particular, there exists a large $C>0,$ such that
		$$ \int_0^T \|\U_n(t)\|^2 \d t \leq  C < +\infty .$$
		Therefore the sequence $\{\U_n\}$ is uniformly bounded in the space $\mathrm{L}^2(0,T;\G_{\text{div}})$. Since $(\u_n,\varphi_n)$ is a unique weak solution of the system \eqref{nonlin phi}-\eqref{initial conditions} with control $\U_n$, from the energy estimates,
		one can easily show that the sequence $\{\u_n\} $ is uniformly bounded in $\mathrm{L}^{\infty}(0,T;\G_{\text{div}})\cap \mathrm{L}^2(0,T;\V_{\text{div}})$ and $\{\varphi_n\} $ is uniformly bounded in $\mathrm{L}^{\infty}(0,T;\mathrm{H})\cap\mathrm{L}^2(0,T;\mathrm{V})$. 
		Hence, by using the Banach–Alaoglu Theorem, we can extract a subsequence $\{(\u_{n},\varphi_{n}, \U_n)\}$ such that 
		\begin{equation}\label{3.47}
		\left\{
	\begin{aligned} 
	\u_n&\xrightharpoonup{w^*}\u^*\text{ in } \mathrm{L}^{\infty}(0,T;\G_{\text{div}}), \\ \u_n &\rightharpoonup \u^*\text{ in  }\textrm{L}^2(0,T;\V_{\text{div}}),\\ 
	(\u_n)_t &\rightharpoonup \u^*_t\text{ in  }\textrm{L}^2(0,T;\V_{\text{div}}'),\\\varphi_n &\xrightharpoonup{w^*} \varphi^*\text{ in }\mathrm{ L}^{\infty}(0,T;\mathrm{H}),\\ 
	(\varphi_{n}) _t&\rightharpoonup \varphi^*_t\text{ in }\mathrm{ L}^2(0,T;\mathrm{V}'),\\ \U_n &\rightharpoonup  \U^*\text{ in  }\mathrm{L}^2(0,T;\G_{\text{div}}).
	\end{aligned}
	\right.
		\end{equation}
		A calculation similar to proof of Theorem \ref{exist} , [\cite{weak}, Theorem 2] and theorem, \ref{unique} [ \cite{unique} Theorem 2] and using the Aubin-Lion's compactness arguments and the convergence in \eqref{3.47}, we get  
		\begin{equation}
		\left\{
		\begin{aligned}
		\u_n&\to \u^*\text{ in }\mathrm{L}^{2}(0,T;\G_{\text{div}}), \text{ a. e. }\text{ in }\Omega\times(0,T),\\
		\varphi_n &\to \varphi^*\text{ in }\mathrm{ L}^{2}(0,T;\mathrm{H}), \text{ a. e. }\text{ in }\Omega\times(0,T).
		\end{aligned}
		\right. 
		\end{equation}
		Proceeding similarly as in Theorem 1, \cite{weak} and Theorem 2, \cite{unique}, we obtain  $(\u^*,\varphi^*)$ is a unique weak solution of \eqref{nonlin phi}-\eqref{initial conditions} with control $\U^*$. Note that the initial condition (\ref{initial1}) and (\ref{fes}) also imply that 
		\begin{equation*}
		\u_n\in \mathrm{L}^{\infty}(0,T;\V_{\text{div}})\cap \mathrm{L}^2(0,T;\H^2), \ \ \varphi_n\in \mathrm{L}^{\infty}((0,T) \times\Omega)\cap \mathrm{L}^{\infty}(0,T;\mathrm{V})).
		\end{equation*}
		and 
		\begin{equation*}\varphi_n \in\mathrm{L}^{\infty}(0,T;\mathrm{W}^{1,p}),\ 2\leq p<\infty.
		\end{equation*} 
		Thus, we have (see Theorem 2, \cite{strong} also)
	\begin{equation}
	\left\{
	\begin{aligned}
	\u_n&\xrightharpoonup{w^*}\u^*\ \text{ in }\ \mathrm{L}^{\infty}(0,T;\V_{\text{div}}), \\
	\u_n&\rightharpoonup\u^*\ \text{ in }\ \mathrm{L}^2(0,T;\H^2(\Omega)),\\
	(\u_n)_t&\rightharpoonup\u^*_t\ \text{ in }\ \mathrm{L}^{2}(0,T;\G_{\text{div}}), \\
	\varphi_n&\to\varphi^*\ \text{ a.e., }(x,t) \ \in\Omega\times(0,T), \\
	(\varphi_{n})_t&\rightharpoonup\varphi^*_t\ \text{ in }\ \mathrm{L}^{2}(0,T;\mathrm{H})
	\end{aligned}
	\right.
	\end{equation}
		Thus the above convergences and Remark \ref{rem2.12} imply that  $$\u^*\in\mathrm{C}([0,T];\V_{\text{div }}),\ \varphi^*\in\mathrm{C}([0,T];\mathrm{H}^2).$$ Hence, it is immediate that 
		\begin{align}\label{initial2}
		\u^*(0)=	\u_0\in\V_{\text{div}}\ \text{ and }\ \varphi^*(0)=\varphi_0\in \mathrm{H}^2,
		\end{align}
		by using the continuity. Hence $(\u^*,\varphi^*,\U^*)$ is a unique strong solution of  \eqref{nonlin phi}-\eqref{initial conditions} with control $\U^*\in\mathscr{U}_{\text{ad}}$. This easily implies  $(\u^*,\varphi^*,\U^*)\in \mathscr{A}_{\text{ad}}$. 
		
		\noindent \textbf{Claim (3):} $\mathscr{J}=\mathcal{J}(\u^*,\varphi^*,\U^*)$.	Recall that $\mathscr{U}_{\text{ad}}$ is a closed and convex subset of $\mathrm{L}^2(0,T;\G_{\text{div}})$. Since the cost functional $\mathcal{J}(\cdot,\cdot,\cdot)$ is continuous and convex on $\mathrm{L}^2(0,T;\V_{\text{div}}) \times \mathrm{L}^2(0,T;\mathrm{V}) \times \mathscr{U}_{\text{ad}}$, it follows that $\mathcal{J}(\cdot,\cdot,\cdot)$ is weakly lower semi-continuous (see Proposition 1, Chapter 5, \cite{AbEk}). That is, for a sequence 
		$$(\u_n,\varphi_n,\U_n)\xrightharpoonup{w}(\u^*,\varphi^*,\U^*)\text{ in }\mathrm{L}^2(0,T;\V_{\text{div}})\times \mathrm{L}^2(0,T;\mathrm{V})\times \mathrm{L}^2(0,T;\G_{\text{div}}),$$
		we have 
		\begin{align*}
		\mathcal{J}(\u^*,\varphi^*,\U^*) \leq  \liminf \limits _{n\rightarrow \infty} \mathcal{J}(\u_n,\varphi_n,\U_n).
		\end{align*}
		Therefore, we get
		\begin{align*}\mathscr{J} \leq \mathcal{J}(\u^*,\varphi^*,\U^*) \leq  \liminf \limits _{n\rightarrow \infty} \mathcal{J}(\u_n,\varphi_n,\U_n)=  \lim \limits _{n\rightarrow \infty} \mathcal{J}(\u_n,\varphi_n,\U_n) = \mathscr{J},\end{align*}
		and hence $(\u^*,\varphi^*,\U^*)$ is a minimizer.
	\end{proof}

	\section{Data Assimilation Problem }\label{se5}\setcounter{equation}{0}
	In this section, we consider a problem similar to the data assimilation problems of meteorology. In the data assimilation problems coming from meteorology determining the correct initial condition for the future predictions is the key step. Taking inspiration from this scenario, we can pose a problem to find the unknown optimal data initialization problem (see \cite{sritharan} for the case of incompressible Navier-Stokes equations).  
	Let us consider an initial data optimization problem for the system governed by non local Cahn-Hilliard-Navier-Stokes system.  We formulate the problem as finding the optimal initial velocity  $\U\in\V_{\text{div}}$ such that $(\u, \varphi, \U) $ satisfies the following system:
	\begin{equation}\label{5.1} 
	\left\{
	\begin{aligned}
	\varphi_t + \u\cdot \nabla \varphi &= \Delta \mu,\ \text{ in } \ \Omega\times(0,T),\\
	\mu &= a \varphi - \J\ast \varphi + \F'(\varphi), \ \text{ in } \ \Omega\times(0,T), \\
	\u_t - \nu \Delta \u + (\u\cdot \nabla )\u + \nabla \uppi &= \mu \nabla \varphi + \mathbf{h}, \ \text{ in } \ \Omega\times(0,T), \\
	\text{div }\u&= 0,\ \text{ in } \ \Omega\times(0,T), \\
	\frac{\partial \mu}{\partial\mathbf{n}} &= 0 \ , \u=0 \ \text{ on } \ \partial \Omega \times (0,T),\\
	\u(0) &= \U, \  \ \varphi(0) = \varphi _0 \ \text{ in } \ \Omega,
	\end{aligned}  
	\right.
	\end{equation}
	and minimizes the cost functional 
	\begin{equation}\label{cost2}
	\begin{aligned}
	\mathcal{J}(\u,\varphi,\U)&:=  \frac{1}{2} \|\U\|^2+\frac{1}{2} \int_0^T \|\u(t) - \u_M(t)\|^2 \d t+ \frac{1}{2} \int_0^T \|\varphi(t) -\varphi_M(t)\|^2 \d t\\ &\quad+\frac{1}{2}\|\u(T)-\u_M^f\|^2+\frac{1}{2}\|\varphi(T)-\varphi_M^f\|^2,
	\end{aligned}
	\end{equation}
	where $\u_M$ is the measured average velocity of the fluid and $\varphi_M$  is the measured difference of the concentration of two fluids. $\u_M^f$ and $\varphi_M^f$ are measured velocity and concentration at time T respectively. We assume that the desired states
	\begin{align}\label{um}\u_M\in\mathrm{L}^2(0,T;\V_{\text{div }}),\ \varphi_M\in\mathrm{L}^2(0,T;\mathrm{V}),\ \u_M^f\in\V_{\text{div }}\ \text{ and }\ \varphi_M^f\in\mathrm{V}.\end{align}
	From Theorem \ref{strongsol}, we know that a unique strong solution for the system (\ref{5.1}) exists only if the initial velocity belongs to $\V_{\text{div }}$.  So, in this case, we take the admissible control class, $\mathscr{U}_{\text{ad}}$ as $\V_{\text{div}}$.  The  \emph{admissible class} $\mathscr{A}_{\text{ad}}$ consists of all triples $(\u,\varphi,\U)$ such that the set of states $(\u,\varphi)$ is a unique strong solution of the system \eqref{5.1} with control $\U \in \mathscr{U}_{ad} $.  The optimal control problem can be then defined as:
	\begin{align} \label{IOCP}\tag{IOCP}
	\min_{(\u,\varphi,\U) \in \mathscr{A}_{\text{ad}} } \mathcal{J}(\u,\varphi,\U).
	\end{align}

	\subsection{Existence of an optimal control and maximum principle}
	We first show that an optimal triplet $(\u^*,\varphi^*,\U^*)$ exists for the problem \ref{IOCP}. The proof of the following theorem is similar to that of Theorem \ref{optimal}.
	\begin{theorem}[Existence of an Optimal Triplet]\label{optimal1}
		Let the Assumption \ref{prop of F and J} along with the condition (\ref{fes}) holds true and the initial data $(\U,\varphi_0)$ satisfying (\ref{initial}) be given, where $\U\in\mathscr{U}_{\text{ad}}$ is the control parameter. Then there exists at least one triplet  $(\u^*,\varphi^*,\U^*)\in\mathscr{A}_{\text{ad}}$  such that the functional $ \mathcal{J}(\u,\varphi,\U)$ attains its minimum at $(\u^*,\varphi^*,\U^*)$, where $(\u^*,\varphi^*)$ is the unique strong solution of \eqref{nonlin phi}-\eqref{initial conditions}  with the initial data control $\U^*\in\mathscr{U}_{\text{ad}}$.
	\end{theorem}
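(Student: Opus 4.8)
The plan is to follow the three-step scheme of the proof of Theorem \ref{optimal}, adapted to the present situation in which the control enters only through the initial velocity $\u(0)=\U\in\V_{\text{div}}$ rather than as a distributed forcing. First I would check that $\mathscr{A}_{\text{ad}}$ is nonempty: fixing any $\U\in\V_{\text{div}}$ (for instance $\U=\mathbf{0}$) together with the prescribed $\varphi_0$ satisfying \eqref{initial}, and recalling $\h\in\mathrm{L}^2(0,T;\G_{\text{div}})$, $\F\in\C^3(\R)$, $a\in\mathrm{H}^2(\Omega)$, $\varphi_0\in\mathrm{H}^2(\Omega)\hookrightarrow\mathrm{V}\cap\mathrm{L}^{\infty}(\Omega)$ and $\J\in\W^{2,1}(\R^2;\R)$, Theorem \ref{strongsol} furnishes a unique strong solution $(\u,\varphi)$ of \eqref{5.1}. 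Hence $\mathcal{J}(\u,\varphi,\U)<+\infty$ and $(\u,\varphi,\U)\in\mathscr{A}_{\text{ad}}$, so that $\mathscr{J}:=\inf_{\mathscr{A}_{\text{ad}}}\mathcal{J}$ is finite.

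Next I would take a minimizing sequence $\{\U_n\}\subset\mathscr{U}_{\text{ad}}=\V_{\text{div}}$ with associated strong solutions $(\u_n,\varphi_n)$ and $\mathcal{J}(\u_n,\varphi_n,\U_n)\to\mathscr{J}$. The regularization term $\tfrac12\|\U\|^2$ in \eqref{cost2} bounds $\{\U_n\}$ uniformly in $\V_{\text{div}}$, so by reflexivity a subsequence satisfies $\U_n\rightharpoonup\U^*$ in $\V_{\text{div}}$, and the compact embedding $\V_{\text{div}}\hookrightarrow\hookrightarrow\G_{\text{div}}$ upgrades this to $\U_n\to\U^*$ strongly in $\G_{\text{div}}$ with $\U^*\in\V_{\text{div}}$. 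The point of passing through the compact embedding is that the forcing $\h$ and the datum $\varphi_0$ are identical for every $n$, so the continuous dependence estimate of Theorem \ref{unique}—whose right-hand side then collapses to $\|\U_n-\U_m\|^2\,\Lambda_0(t)$, with $\Lambda_0$ uniformly bounded by the energy bounds of Remark \ref{rem2.5}—shows that $\{(\u_n,\varphi_n)\}$ is Cauchy in $\C([0,T];\G_{\text{div}})\cap\mathrm{L}^2(0,T;\V_{\text{div}})$ for the velocity and in $(\C([0,T];\mathrm{V}')\cap\mathrm{L}^2(0,T;\mathrm{H}))$ for the concentration. Its strong limit $(\u^*,\varphi^*)$ is, after passing to the limit in the weak formulation, the weak solution of \eqref{5.1} with initial velocity $\U^*$; since $\U^*\in\V_{\text{div}}$ and $\varphi_0$ obeys \eqref{initial}, Theorem \ref{strongsol} together with Remark \ref{rem2.12} promotes it to the unique strong solution, so $\u^*\in\C([0,T];\V_{\text{div}})$, $\varphi^*\in\C([0,T];\mathrm{H}^2)$ and $(\u^*,\varphi^*,\U^*)\in\mathscr{A}_{\text{ad}}$.

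Finally I would verify $\mathscr{J}=\mathcal{J}(\u^*,\varphi^*,\U^*)$ by weak lower semicontinuity. The strong convergences just obtained make the two running costs $\int_0^T\|\u_n-\u_M\|^2\d t$ and $\int_0^T\|\varphi_n-\varphi_M\|^2\d t$ and the terminal velocity cost $\|\u_n(T)-\u_M^f\|^2$ pass to the limit directly (continuous dependence also yields $\u_n(T)\to\u^*(T)$ in $\G_{\text{div}}$). For the terminal concentration cost I would use that $\varphi_n(T)$ is uniformly bounded in $\mathrm{V}$ while $\varphi_n(T)\to\varphi^*(T)$ in $\mathrm{V}'$, hence $\varphi_n(T)\rightharpoonup\varphi^*(T)$ weakly in $\mathrm{H}$, so that term is weakly lower semicontinuous; and $\tfrac12\|\U^*\|^2\le\liminf_n\tfrac12\|\U_n\|^2$ by weak lower semicontinuity of the $\V_{\text{div}}$-norm under $\U_n\rightharpoonup\U^*$. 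Combining, $\mathcal{J}(\u^*,\varphi^*,\U^*)\le\liminf_n\mathcal{J}(\u_n,\varphi_n,\U_n)=\mathscr{J}\le\mathcal{J}(\u^*,\varphi^*,\U^*)$, giving equality.

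The principal obstacle, and the genuine difference from Theorem \ref{optimal}, is ensuring that the weak limit $\U^*$ of the controls remains in $\V_{\text{div}}$: strong solvability of \eqref{5.1}, and hence membership of the limiting triple in $\mathscr{A}_{\text{ad}}$, is guaranteed by Theorem \ref{strongsol} only for $\V_{\text{div}}$ initial data, so the regularization term must control the full $\V_{\text{div}}$-norm and the compact embedding into $\G_{\text{div}}$ is indispensable. The secondary technical hurdle is converting this convergence of the initial data into convergence of the states strong enough to treat the nonlinearities and the terminal functionals; I expect the cleanest route to be the continuous dependence estimate of Theorem \ref{unique} (specialized to common forcing and common $\varphi_0$), rather than re-running the Banach–Alaoglu and Aubin–Lions compactness argument used for \eqref{control problem}.
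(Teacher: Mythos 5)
Your proposal has a genuine gap at exactly the point you yourself call ``the principal obstacle.'' The regularization term in \eqref{cost2} is $\tfrac12\|\U\|^2$, where $\|\cdot\|$ is, by the paper's standing convention, the norm of $\G_{\text{div}}$ (the $\mathrm{L}^2$-norm), \emph{not} the norm of $\V_{\text{div}}$. Consequently a minimizing sequence $\{\U_n\}$ is bounded only in $\G_{\text{div}}$: you cannot extract a subsequence converging weakly in $\V_{\text{div}}$, the compact embedding of $\V_{\text{div}}$ into $\G_{\text{div}}$ never comes into play, and you do not obtain the strong convergence $\U_n\to\U^*$ in $\G_{\text{div}}$ on which everything else rests. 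This collapses the whole chain: the Cauchy argument via the continuous dependence estimate of Theorem \ref{unique} (which needs $\{\U_n\}$ Cauchy in $\G_{\text{div}}$, something weak $\G_{\text{div}}$-convergence does not provide), the promotion of the limit to a strong solution via Theorem \ref{strongsol} (which needs $\U^*\in\V_{\text{div}}$), and your final lower-semicontinuity step, which you phrase for the $\V_{\text{div}}$-norm although the cost contains only the $\G_{\text{div}}$-norm. The paper's own computation in this section confirms the weaker reading: from $\mathcal{J}(\u_n,\varphi_n,\U_n)\le K$ it deduces only $\|\U_n\|^2\le C$, i.e.\ boundedness in $\G_{\text{div}}$, and correspondingly extracts $\U_n\rightharpoonup\U^*$ weakly in $\G_{\text{div}}$.

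The paper closes the $\V_{\text{div}}$-membership gap by a different mechanism. It reruns the Banach--Alaoglu/Aubin--Lions compactness scheme of Theorem \ref{optimal}: the weak energy estimates (which require only a $\G_{\text{div}}$-bound on the initial velocities) give uniform bounds on the states, one passes to the limit in the weak formulation, and then $\U^*\in\V_{\text{div}}=\mathscr{U}_{\text{ad}}$ is recovered \emph{a posteriori} from the regularity of the limit trajectory: the strong-solution convergences yield $\u^*\in\C([0,T];\V_{\text{div}})$, so that $\U^*=\u^*(0)\in\V_{\text{div}}$, which is precisely what the remark following the theorem (``since $\mathscr{U}_{\text{ad}}$ is a closed subset of $\V_{\text{div}}$, the condition in \eqref{initial2} yields $\U^*\in\mathscr{U}_{\text{ad}}$'') refers to. Your continuous-dependence shortcut is attractive in itself --- it would indeed replace the compactness machinery by Theorem \ref{unique} specialized to common $\h$ and $\varphi_0$ --- but it can only be run after strong $\G_{\text{div}}$-convergence of the controls is secured, either by strengthening the cost so that it genuinely penalizes $\|\U\|_{\V_{\text{div}}}$ (changing the problem), or by first obtaining, as the paper does, enough convergence of the trajectories to identify $\u^*(0)$; as written, your argument never gets off the ground. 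A secondary instance of the same issue: your claim that $\varphi_n(T)$ is uniformly bounded in $\mathrm{V}$ presupposes strong-solution estimates that are uniform along the minimizing sequence, which again would require control of $\|\U_n\|_{\V_{\text{div}}}$ rather than of $\|\U_n\|$.
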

	Observe that since $\mathscr{U}_{\text{ad}}$ is a closed subset of $\V_{\text{div }}$, the condition in \eqref{initial2} yields $\U^*\in\mathscr{U}_{\text{ad}}$.

	As  in Theorem \ref{main} (see Theorem 4.7, \cite{BDM} also), the Pontryagin minimum principle follows (with some obvious modifications) for this control problem as well. That is, we have 
	\begin{align*}
	\frac{1}{2}\|\U^*\|^2 +\langle \p(0),\U^* \rangle \leq \frac{1}{2}\|\mathrm{W}\|^2 + \langle \p(0),\mathrm{W} \rangle,
	\end{align*}
	for all $\mathrm{W} \in \mathscr{U}_{\text{ad}}\subset\V_{\text{div}}$.
	The optimal control is given by $\U^*=-\p(0)$, where $\p(\cdot)$ is the solution of the following adjoint system   
	\begin{equation}\label{5.3}
	\left\{
	\begin{aligned}
	-\p_t- \nu \Delta \p +(\p \cdot \nabla^T)\u - (\u \cdot \nabla)\p - (\nabla \varphi)^{\top} \eta &= (\u-\u_M),\ \text{in } \  \Omega \times (0,T),\\
	-\eta_t   +\J \ast (\p \cdot \nabla \varphi) - (\nabla{\J} \ast  \varphi)\cdot \p+ \nabla a\cdot \p \varphi 
	- \u \cdot\nabla \eta - a \Delta \eta&\\ + \J\ast \Delta \eta - \F''(\varphi)\Delta \eta &= (\varphi - \varphi_M),\ \text{in } \  \Omega \times (0,T), \\ \text{div }\p&=0,\ \text{in } \  \Omega \times (0,T),\\ \p=0,\frac{\partial \eta}{\partial \mathbf{n}}&=\mathbf{0},\ \text{on } \  \partial\Omega \times (0,T),\\ \p(T,\cdot)&=\u(T)-\u_M^f,\ \text{in } \  \partial\Omega, \\ \eta(T,\cdot)&=\varphi(T)-\varphi_M^f\ \text{in } \  \partial\Omega.
	\end{aligned}
	\right.
	\end{equation}
	A similar calculation as in Theorem \ref{adjoint} yields the existence of a weak solution to the system (\ref{5.3}) such that  $$(\p,\eta)\in (\C([0,T];\G_{\text{div}})\cap\mathrm{L}^2(0,T;\V_{\text{div}}))\times (\C([0,T];\mathrm{H})\cap\mathrm{L}^2(0,T;\mathrm{V})).$$
	Using the continuity of $\p(\cdot)$ in time  at $t=0$ in $\G_{\text{div }}$, we know that $\p(0)\in\G_{\text{div}}$. But this is not good enough, as our optimal control is  $\U^*=-\p(0)$, and we require $\p(0)\in\mathscr{U}_{\text{ad}}\subset\V_{\text{div}}$.  Hence, we need the following regularity results for the adjoint system (\ref{5.3}).

	\begin{theorem}
		Let $(\u,\varphi)$ be a unique strong solution of the nonlinear system \eqref{nonlin phi}-\eqref{initial conditions} with $\U=0$. Then the  \emph{unique weak solution} to the system \eqref{5.3} satisfies \begin{align}
		(\p,\eta)\in(\C([0,T];\V_{\text{div}})\cap\mathrm{L}^2(0,T;\H^2))\times (\C([0,T]; \mathrm{V})\cap \mathrm{L}^2(0,T;\mathrm{H}^2)).\end{align} 
	\end{theorem}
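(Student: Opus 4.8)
The plan is to run a two-step bootstrap on the already-constructed weak solution $(\p,\eta)$ of \eqref{5.3}, which by the argument indicated above (modelled on Theorem \ref{adjoint}) lies in $(\C([0,T];\G_{\text{div}})\cap\mathrm{L}^2(0,T;\V_{\text{div}}))\times(\C([0,T];\mathrm{H})\cap\mathrm{L}^2(0,T;\mathrm{V}))$. Throughout I will use that $(\u,\varphi)$ is the strong solution, so that $\u\in\mathrm{L}^{\infty}(0,T;\V_{\text{div}})\cap\mathrm{L}^2(0,T;\H^2)$, $\u\in\mathrm{L}^2(0,T;\mathbb{L}^{\infty})$ by Agmon's inequality \eqref{agm}, and $\varphi\in\mathrm{L}^{\infty}(0,T;\mathrm{W}^{1,p})\cap\mathrm{L}^2(0,T;\mathrm{H}^2)$ with $\nabla\varphi\in\mathrm{L}^{\infty}(0,T;\mathbb{L}^4)$. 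I will also use that the terminal data satisfy $\p_T=\u(T)-\u_M^f\in\V_{\text{div}}$ and $\eta_T=\varphi(T)-\varphi_M^f\in\mathrm{V}$, which is exactly the integrability the $\V_{\text{div}}$- and $\mathrm{V}$-level energy identities require at the endpoint $T$.

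\emph{Step 1 (regularity of $\eta$).} I would first test the $\eta$-equation of \eqref{5.3} with $-\Delta\eta$, repeating the estimates $I_{11}$--$I_{16}$ of Theorem \ref{adjoint}. The only change is that the source is now $(\varphi-\varphi_M)$ with $\varphi_M\in\mathrm{L}^2(0,T;\mathrm{V})\subset\mathrm{L}^2(0,T;\mathrm{H})$, handled exactly as $I_{16}$ there. Using the coercivity $C_0\|\Delta\eta\|^2\leq((a+\F''(\varphi))\Delta\eta,\Delta\eta)$ from Assumption \ref{prop of F and J}(2), absorbing the good terms, and invoking the base bound $\p\in\mathrm{L}^{\infty}(0,T;\G_{\text{div}})\cap\mathrm{L}^2(0,T;\V_{\text{div}})$ together with the strong-solution regularity of $(\u,\varphi)$ for the Gronwall coefficients, I obtain $\eta\in\mathrm{L}^{\infty}(0,T;\mathrm{V})\cap\mathrm{L}^2(0,T;\mathrm{H}^2)$. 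Reading the equation then gives $\eta_t\in\mathrm{L}^2(0,T;\mathrm{H})$ (the worst term $(a+\F''(\varphi)-\J\ast)\Delta\eta$ lies in $\mathrm{L}^2(0,T;\mathrm{H})$ since $a\in\mathrm{H}^2\hookrightarrow\mathrm{L}^{\infty}$ and $\F''(\varphi)\in\mathrm{L}^{\infty}$), and the interpolation $\mathrm{L}^2(0,T;\mathrm{H}^2)\cap\mathrm{H}^1(0,T;\mathrm{H})\hookrightarrow\C([0,T];\mathrm{V})$ yields $\eta\in\C([0,T];\mathrm{V})$.

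\emph{Step 2 (regularity of $\p$).} This is the main new step. I would take the $\mathrm{L}^2$-inner product of the first equation of \eqref{5.3} with $\A\p$. The pressure drops out since $(\nabla q,\A\p)=0$ as $\A\p\in\G_{\text{div}}$; the viscous term gives $(-\nu\Delta\p,\A\p)=\nu\|\A\p\|^2$; and the time term gives $(-\p_t,\A\p)=-\tfrac12\frac{\d}{\d t}\|\nabla\p\|^2$. It remains to dominate the three transport/coupling terms. Writing schematically $\|(\p\cdot\nabla^{\top})\u\|\leq\|\p\|_{\mathbb{L}^4}\|\nabla\u\|_{\mathbb{L}^4}$, $\|(\u\cdot\nabla)\p\|\leq\|\u\|_{\mathbb{L}^{\infty}}\|\nabla\p\|$ and $\|(\nabla\varphi)^{\top}\eta\|\leq\|\nabla\varphi\|_{\mathbb{L}^4}\|\eta\|_{\mathrm{L}^4}$, and using \eqref{gu1}, the Ladyzhenskaya inequality (Lemma \ref{lady}), Agmon's inequality \eqref{agm} and Young's inequality, each term is bounded by $\tfrac{\nu}{8}\|\A\p\|^2$ plus a term of the form $g(t)(\|\nabla\p\|^2+\|\p\|^2)$ or an $\mathrm{L}^1_t$ quantity. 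The crucial point is that the source is now $(\u-\u_M)\in\mathrm{L}^2(0,T;\G_{\text{div}})$ (because $\u_M\in\mathrm{L}^2(0,T;\V_{\text{div}})$), so $(\u-\u_M,\A\p)\leq\tfrac{\nu}{8}\|\A\p\|^2+C\|\u-\u_M\|^2$ needs no derivative on the data, which is what makes the $\H^2$-estimate close. The Gronwall coefficient $g(t)$ is integrable because $\u\in\mathrm{L}^2(0,T;\H^2)$, and $\eta\in\mathrm{L}^{\infty}(0,T;\mathrm{V})$ from Step 1 controls the coupling term. Integrating from $t$ to $T$, using $\p_T\in\V_{\text{div}}$, and applying Gronwall gives $\p\in\mathrm{L}^{\infty}(0,T;\V_{\text{div}})\cap\mathrm{L}^2(0,T;\H^2)$; the equation then yields $\p_t\in\mathrm{L}^2(0,T;\G_{\text{div}})$, and the interpolation $\mathrm{L}^2(0,T;\H^2)\cap\mathrm{H}^1(0,T;\G_{\text{div}})\hookrightarrow\C([0,T];\V_{\text{div}})$ gives the continuity.

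The main obstacle I anticipate is the $\A\p$-test for $\p$: one must simultaneously absorb the three quadratic terms into $\nu\|\A\p\|^2$ and verify that all resulting Gronwall coefficients are time-integrable using only the strong-solution bounds $\u\in\mathrm{L}^2(0,T;\H^2)\cap\mathrm{L}^2(0,T;\mathbb{L}^{\infty})$, $\nabla\varphi\in\mathrm{L}^{\infty}(0,T;\mathbb{L}^4)$ and the Step-1 bound $\eta\in\mathrm{L}^{\infty}(0,T;\mathrm{V})$. The coupling between the $\p$- and $\eta$-estimates is resolved by carrying out Step 1 first, since the $-\Delta\eta$ estimate needs $\p$ only at the base level $\mathrm{L}^2(0,T;\V_{\text{div}})$.
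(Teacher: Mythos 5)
Your proposal is correct, and at the level of the actual estimates it coincides with the paper's proof: both arguments test the $\p$-equation with (the projection of) $-\Delta\p$ and the $\eta$-equation with $-\Delta\eta$, use the coercivity $C_0\|\Delta\eta\|^2\leq((a+\F''(\varphi))\Delta\eta,\Delta\eta)$ from Assumption \ref{prop of F and J}(2), the H\"older/Ladyzhenskaya/Agmon/Gagliardo--Nirenberg/Young inequalities together with the strong-solution bounds $\u\in\mathrm{L}^2(0,T;\H^2)$ and $\nabla\varphi\in\mathrm{L}^{\infty}(0,T;\mathbb{L}^4)$, integrate backwards from the terminal data $\p_T=\u(T)-\u_M^f\in\V_{\text{div}}$, $\eta_T=\varphi(T)-\varphi_M^f\in\mathrm{V}$, and finally read off $\p_t\in\mathrm{L}^2(0,T;\G_{\text{div}})$, $\eta_t\in\mathrm{L}^2(0,T;\mathrm{H})$ to upgrade to continuity in $\V_{\text{div}}\times\mathrm{V}$.

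Where you differ is in the organization of the Gronwall step, and your variant is legitimate. The paper runs the two estimates simultaneously: it adds the differential inequalities for $\|\nabla\p\|^2$ and $\|\nabla\eta\|^2$ and applies Gronwall once to the sum $\|\nabla\p(t)\|^2+\|\nabla\eta(t)\|^2$. You instead decouple: you close the $\eta$-estimate first using only the base regularity $\p\in\mathrm{L}^2(0,T;\V_{\text{div}})$, which works precisely for the reason you identify --- the coupling terms $\J\ast(\p\cdot\nabla\varphi)$, $(\nabla\J\ast\varphi)\cdot\p$ and $\nabla a\cdot\p\,\varphi$ enter with coefficients $\|\nabla\varphi\|_{\mathbb{L}^4}^2$, $\|\varphi\|_{\mathrm{L}^4}^2\in\mathrm{L}^{\infty}(0,T)$, so their time integral against $\|\nabla\p\|^2$ is already a finite constant --- and only then close the $\p$-estimate using $\eta\in\mathrm{L}^{\infty}(0,T;\mathrm{V})$. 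This buys modularity: your Step 1 is literally a rerun of the $-\Delta\eta$ estimate of Theorem \ref{adjoint} with $\varphi_d$ replaced by $\varphi_M$, so the only genuinely new computation is the $\A\p$-estimate, whereas the paper's coupled version handles both unknowns in one inequality without needing to observe the decoupling. Your use of the Stokes operator $\A\p$ as test function instead of $-\Delta\p$ is also fine and slightly cleaner: it annihilates any pressure gradient and gives $\|\p\|_{\mathbb{H}^2}\leq C\|\A\p\|$ by elliptic regularity for the Stokes problem, leading to the same conclusion.
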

	\begin{proof}
		To prove the higher regularity let us take inner product with $-\Delta\p$ to the first equation in (\ref{5.3}) to find 
		\begin{align}\label{p5}
		-\frac{1}{2}\frac{\d}{\d t}\|\nabla\p\|^2+\nu\|\Delta\p\|^2&=-((\p\cdot\nabla^T)\u,\Delta\p)+((\u\cdot\nabla)\p,\Delta\p)-(\eta,\nabla\varphi\cdot\Delta\p)+(\u-\u_M,\Delta\p)\nonumber\\&=:\sum_{i=1}^4I_i.
		\end{align}
		Using the H\"older, Ladyzhenskaya, Young's, Poincar\'e and Gagliardo-Nirenberg inequalities (see \eqref{gu1}), we estimate $|I_1|$ as  
		\begin{align}\label{p6}
		|I_1|&\leq \|\p\|_{\mathbb{L}^4}\|\nabla\u\|_{\mathbb{L}^4}\|\Delta\p\|\leq \frac{\nu}{8}\|\Delta\p\|^2+\frac{2}{\nu}\|\p\|_{\mathbb{L}^4}^2\|\nabla\u\|_{\mathbb{L}^4}^2\leq \frac{\nu}{8}\|\Delta\p\|^2+\frac{2\sqrt{2}}{\nu}\|\p\|\|\nabla\p\|\|\nabla\u\|_{\mathbb{L}^4}^2\nonumber\\&\leq \frac{\nu}{8}\|\Delta\p\|^2+C(\nu,\lambda_1)\|\u\|_{\mathbb{H}^2}^2\|\nabla\p\|^2.
		\end{align}
		Let us use the H\"older, Agmon's and Young's inequalities to estimate $|I_2|$ as
		\begin{align}\label{p7}
		|I_2|&\leq \|\u\|_{\mathbb{L}^{\infty}}\|\nabla\p\|\|\Delta\p\|\leq \frac{\nu}{8}\|\Delta\p\|^2+\frac{2}{\nu}\|\u\|_{\mathbb{L}^{\infty}}^2\|\nabla\p\|^2\leq \frac{\nu}{8}\|\Delta\p\|^2+C(\nu)\|\u\|_{\H^2}^2\|\nabla\p\|^2.
		\end{align}
		Using the H\"older, Gagliardo-Nirenberg and Young's inequalities, we estimate $|I_3|$ as 
		\begin{align}\label{p8}
		|I_3|&\leq \|\eta\|_{\mathrm{L}^4}\|\nabla\varphi\|_{\mathbb{L}^4}\|\Delta\p\|\leq \frac{\nu}{8}\|\Delta\p\|^2+\frac{2}{\nu}\|\eta\|_{\mathrm{L}^4}^2\|\nabla\varphi\|_{\mathbb{L}^4}^2\leq \frac{\nu}{8}\|\Delta\p\|^2+C(\nu)\left(\|\eta\|^2+\|\nabla\eta\|^2\right)\|\nabla\varphi\|_{\mathbb{L}^4}^2.
		\end{align}
		An application of the Cauchy-Schwarz and Young's inequalities yield
		\begin{align}\label{p9}
		|I_4|\leq \|\u-\u_M\|\|\Delta\p\|\leq \frac{\nu}{8}\|\Delta\p\|^2+\frac{2}{\nu}\|\u-\u_M\|^2.
		\end{align}
		Combining (\ref{p6})-(\ref{p9}) and substituting in (\ref{p5}) we get 
		\begin{align}\label{p10}
		&-\frac{1}{2}\frac{\d}{\d t}\|\nabla\p\|^2+\frac{\nu}{2}\|\Delta\p\|^2\nonumber\\&\leq \frac{2}{\nu}\|\u-\u_M\|^2+ C(\nu,\lambda_1)\|\u\|^2_{\H^2}\|\nabla\p\|^2 +C(\nu)\|\nabla\varphi\|_{\mathbb{L}^4}^2\|\eta\|^2+C(\nu)\|\nabla\varphi\|_{\mathbb{L}^4}^2\|\nabla\eta\|^2.
		\end{align}
		
		Let us now multiply the second equation in (\ref{5.3}) with $-\Delta\eta$ to get 
		\begin{align}\label{p11}
		&-\frac{1}{2}\frac{\d}{\d t}\|\nabla\eta\|^2+((a+\F''(\varphi))\Delta\eta,\Delta\eta)\nonumber\\&\quad=( \J \ast (\p \cdot \nabla \varphi),\Delta\eta) - ((\nabla{\J} \ast  \varphi)\cdot \p,\Delta\eta) + (\nabla a\cdot \p \varphi ,\Delta\eta)
		-( \u \cdot\nabla \eta,\Delta\eta) \nonumber\\&\qquad+( \J\ast \Delta \eta,\Delta\eta)+(\varphi - \varphi_M,\Delta\eta)\nonumber\\&\quad=:\sum_{i=5}^{10}I_i.
		\end{align}
		But we know that 
		\begin{align}\label{p12}
		C_0\|\Delta\eta\|^2\leq ((a+\F''(\varphi))\Delta\eta,\Delta\eta).
		\end{align}
		We estimate $|I_5 |$ using the H\"older, Ladyzhenskaya, Poincar\'e and Young's inequalities as
		\begin{align}\label{p13}
		|I_5 |&\leq \|\J*\p\|_{\mathbb{L}^4}\|\nabla\varphi\|_{\mathbb{L}^4}\|\Delta\eta\|\leq\frac{C_0}{12}\|\Delta\eta\|^2+\frac{3}{C_0} \|\J\|_{\mathrm{L}^1}^2\|\p\|_{\mathbb{L}^4}^2\|\nabla\varphi\|_{\mathbb{L}^4}^2\nonumber\\&\leq \frac{C_0}{12}\|\Delta\eta\|^2+\frac{3\sqrt{2}}{C_0} \|\J\|_{\mathrm{L}^1}^2\|\p\|\|\nabla\p\|\|\nabla\varphi\|_{\mathbb{L}^4}^2\nonumber\\&\leq \frac{C_0}{12}\|\Delta\eta\|^2+\frac{3}{C_0}\sqrt{\frac{2}{\lambda_1}}\|\J\|_{\mathrm{L}^1}^2\|\nabla\varphi\|_{\mathbb{L}^4}^2\|\nabla\p\|^2,
		\end{align}
		where we have also used the Young's inequality for convolutions.  Similarly $|I_6|$ can be estimated as
		\begin{align}\label{p14}
		|I_6|&\leq \|\nabla{\J} \ast  \varphi\|_{\mathbb{L}^4}\|\p\|_{\mathbb{L}^4}\|\Delta\eta\|
		\leq\frac{C_0}{12}\|\Delta\eta\|^2+\frac{3}{C_0} \|\nabla\J\|_{\mathbb{L}^1}^2\|\varphi\|_{\mathrm{L}^4}^2\|\p\|_{\mathbb{L}^4}^2
		\nonumber\\&\leq \frac{C_0}{12}\|\Delta\eta\|^2+\frac{3}{C_0}\sqrt{\frac{2}{\lambda_1}}\|\nabla\J\|_{\mathbb{L}^1}^2\|\varphi\|_{\mathrm{L}^4}^2\|\nabla\p\|^2.
		\end{align}
		Let us use H\"older, Ladyzhenskaya, Poincar\'e and Young's inequalities to estimate $|I_7|$ as
		\begin{align}\label{p15}
		|I_7|&\leq \|\nabla a\|_{\mathbb{L}^{\infty}}\|\p\|_{\mathbb{L}^4}\|\varphi\|_{\mathrm{L}^4}\|\Delta\eta\|\leq \frac{C_0}{12}\|\Delta\eta\|^2+\frac{3}{C_0}\|\nabla a\|_{\mathbb{L}^{\infty}}^2\|\p\|_{\mathbb{L}^4}^2\|\varphi\|^2_{\mathrm{L}^4}\nonumber\\&\leq \frac{C_0}{12}\|\Delta\eta\|^2+\frac{3}{C_0}\sqrt{\frac{2}{\lambda_1}}\|\nabla a\|_{\mathbb{L}^{\infty}}^2\|\varphi\|^2_{\mathrm{L}^4}\|\nabla\p\|^2.
		\end{align}
		Using the H\"older, Agmon (see \eqref{agm}) and Young's inequalities to estimate $|I_8|$ as 
		\begin{align}\label{p16}
		|I_8|&\leq \|\u\|_{\mathbb{L}^{\infty}}\|\nabla\eta\|\|\Delta\eta\|\leq C\|\u\|_{\H^2}\|\nabla\eta\|\|\Delta\eta\|\leq \frac{C_0}{12}\|\Delta\eta\|^2+C(C_0)\|\u\|_{\mathbb{H}^2}^2\|\nabla\eta\|^2.
		\end{align}
		We use an integration by parts, Cauchy-Schwarz inequality, convolution inequality and Young's inequality  to estimate $|I_9|$ as 
		\begin{align}\label{p17}
		|I_9|&=\left|\sum_{i=1}^2\int_{\Omega}\left(\int_{\Omega}\J(x-y)\Delta\eta(y)\d y\right)\frac{\partial^2\eta(x)}{\partial x_i^2}\d x\right|=\left|\sum_{i=1}^2\int_{\Omega}\frac{\partial}{\partial x_i}\left(\int_{\Omega}\J(x-y)\Delta\eta(y)\d y\right)\frac{\partial\eta(x)}{\partial x_i}\d x\right|\nonumber\\&=|(\nabla\J*\Delta\eta,\nabla\eta)|\leq \|\nabla\J\|_{\mathbb{L}^1}\|\Delta\eta\|\|\nabla\eta\|\leq  \frac{C_0}{12}\|\Delta\eta\|^2+\frac{3}{C_0}\|\nabla\J\|_{\mathbb{L}^1}^2\|\nabla\eta\|^2.
		\end{align}
		Finally, $|I_{10}|$ can be estimated using the Cauchy-Schwarz inequality and Young's inequality as 
		\begin{align}\label{p18}
		|I_{10}|\leq \|\varphi-\varphi_M\|\|\Delta\eta\|\leq \frac{C_0}{12}\|\Delta\eta\|^2+\frac{3}{C_0}\|\varphi-\varphi_M\|^2.
		\end{align}
		Combining (\ref{p12})-(\ref{p18}) and substituting it in (\ref{p11}) to find 
		\begin{align}\label{p19}
		-\frac{1}{2}\frac{\d}{\d t}\|\nabla\eta\|^2+\frac{C_0}{2}\|\Delta\eta\|^2&\leq C(C_0,\lambda_1)\left[\|\J\|_{\mathrm{L}^1}^2\|\nabla\varphi\|_{\mathbb{L}^4}^2+\left(\|\nabla\J\|_{\mathbb{L}^1}^2+\|\nabla a\|_{\mathbb{L}^{\infty}}^2\right)\|\varphi\|_{\mathrm{L}^4}^2\right]\|\nabla\p\|^2\nonumber\\&\quad +C(C_0)\left(\|\u\|_{\mathbb{H}^2}^2+\|\nabla\J\|_{\mathbb{L}^1}^2\right)\|\nabla\eta\|^2+\frac{3}{C_0}\|\varphi-\varphi_M\|^2.
		\end{align}
		Adding (\ref{p10}) and (\ref{p19}), we obtain  
		\begin{align}\label{p20}
		&-\frac{1}{2}\left(\frac{\d}{\d t}\|\nabla\p\|^2+\frac{\d}{\d t}\|\nabla\eta\|^2\right)+\frac{\nu}{2}\|\Delta\p\|^2+\frac{C_0}{2}\|\Delta\eta\|^2\nonumber\\&\quad \leq  C(\nu,\lambda_1,C_0)\left[\|\u\|^2_{\H^2}+\|\J\|_{\mathbb{L}^1}^2\|\nabla\varphi\|_{\mathbb{L}^4}^2+\left(\|\nabla\J\|_{\mathbb{L}^1}^2+\|\nabla a\|_{\mathbb{L}^{\infty}}^2\right)\|\varphi\|_{\mathrm{L}^4}^2\right]\|\nabla\p\|^2+C(\nu)\|\nabla\varphi\|_{\mathbb{L}^4}^2\|\eta\|^2 \nonumber\\&\qquad+C(\nu,C_0)\left[\|\nabla\varphi\|_{\mathbb{L}^4}^2+\left(\|\u\|_{\mathbb{H}^2}^2+\|\nabla\J\|_{\mathbb{L}^1}^2\right)\right]\|\nabla\eta\|^2+\frac{2}{\nu}\|\u-\u_M\|^2+\frac{3}{C_0}\|\varphi-\varphi_M\|^2.
		\end{align}
		Let us integrate the inequality from $t$ to $T$ to find 
		\begin{align}\label{p21}
		&\|\nabla\p(t)\|^2+\|\nabla\eta(t)\|^2+\nu\int_t^T\|\Delta\p(t)\|^2+C_0\int_t^T\|\Delta\eta(t)\|^2\d t\nonumber\\&\leq \|\nabla\p(T)\|^2+\|\nabla\eta(T)\|^2+\frac{2}{\nu}\int_t^T\|\u(s)-\u_M(s)\|^2\d s+\frac{3}{C_0}\int_t^T\|\varphi(s)-\varphi_M(s)\|^2\d s\nonumber\\&\quad +C(\nu,\lambda_1,C_0) \int_t^T\left[\|\u(s)\|^2_{\H^2}+\|\J\|_{\mathrm{L}^1}^2\|\nabla\varphi(s)\|_{\mathbb{L}^4}^2+\left(\|\nabla\J\|_{\mathbb{L}^1}^2+\|\nabla a\|_{\mathbb{L}^{\infty}}^2\right)\|\varphi(s)\|_{\mathrm{L}^4}^2\right]\|\nabla\p(s)\|^2\d s\nonumber\\&\quad +C(\nu,C_0)\int_t^T\left[\|\nabla\varphi(s)\|_{\mathbb{L}^4}^2+\left(\|\u(s)\|_{\mathbb{L}^4}^4+\|\nabla\J\|_{\mathbb{L}^1}^2\right)\right]\|\nabla\eta(s)\|^2\d s+C(\nu)\int_t^T\|\nabla\varphi(s)\|_{\mathbb{L}^4}^2\|\eta(s)\|^2\d s.
		\end{align}
		An application of the Gr\"onwall inequality in (\ref{p21}) yields 
		\begin{align}\label{p22}
		&\|\nabla\p(t)\|^2+\|\nabla\eta(t)\|^2\nonumber\\&\leq \left[\|\nabla(\u(T)-\u_M^f)\|^2+\|\nabla(\varphi(T)-\varphi_M^f)\|^2+\frac{2}{\nu}\left(\int_0^T\|\u(t)\|^2\d t+\int_0^T\|\u_M(t)\|^2\d t\right)\right.\nonumber\\&\left.\qquad +\frac{3}{C_0}\left(\int_0^T\|\varphi(t)\|^2\d t+\int_0^T\|\varphi_M(t)\|^2\d t\right)+C(\nu)\sup_{0\leq t\leq T}\|\eta(t)\|^2\int_0^T\|\nabla\varphi(t)\|_{\mathbb{L}^4}^2\d t\right]\nonumber\\&\quad\times \exp\left(\int_0^T\|\u(t)\|^2_{\H^2}\d t+\|\J\|_{\mathrm{L}^1}^2\int_0^T\|\nabla\varphi(t)\|_{\mathbb{L}^4}^2\d t+\left(\|\nabla\J\|_{\mathbb{L}^1}^2+\|\nabla a\|_{\mathbb{L}^{\infty}}^2\right)\int_0^T\|\varphi(t)\|_{\mathrm{L}^4}^2\d t\right)\nonumber\\&\quad \times\exp\left(\int_0^T\|\nabla\varphi(t)\|_{\mathbb{L}^4}^2\d t+\int_0^T\|\u(t)\|_{\mathbb{H}^2}^2\d t+\|\nabla\J\|_{\mathbb{L}^1}^2T\right),
		\end{align}
		for all $t\in[0,T]$. Note that the right hand side of the above inequality is  finite, since $(\u,\varphi)$ is the unique strong solution of the system (\ref{5.1}), $\u_M,\u_M^f,\varphi_M$ and $\varphi_M^f$ satisfy \eqref{um} and $\eta\in\mathrm{L}^{\infty}(0,T;\mathrm{H})$. Thus, we have $\p\in\mathrm{L}^{\infty}(0,T;\V_{\text{div}})$ and $\eta\in\mathrm{L}^{\infty}(0,T;\mathrm{V})$. Substituting (\ref{p22}) in (\ref{p21}), we also obtain 
		\begin{align}\label{p23}
		\|\nabla\p(t)\|^2+\|\nabla\eta(t)\|^2+\nu\int_t^T\|\Delta\p(t)\|^2+C_0\int_t^T\|\Delta\eta(t)\|^2\d t\leq C,
		\end{align}
		for all $t\in[0,T]$. Hence, we have $\p\in\mathrm{L}^2(0,T;\H^2)$ and $\eta\in\mathrm{L}^2(0,T;\mathrm{H}^2)$. Now it is immediate that  $\p_t\in\mathrm{L}^2(0,T;\G_{\text{div}})$ and $\eta_t\in\mathrm{L}^2(0,T;\mathrm{H})$.
		Thus $\p$ is almost everywhere equal to a continuous function from $[0,T]$ to $\V_{\text{div}}$ and $\eta$ is almost everywhere equal to a continuous function from $[0,T]$ to $\mathrm{V}$. That is, $\p\in\C([0,T];\V_{\text{div}})$ and $\eta\in\C([0,T];\mathrm{V})$. 
	\end{proof}
	Using the continuity of $\p(\cdot)$ at $0$, we have $\p(0)\in\V_{\text{div}}.$
	Hence, a similar calculation as in the proof of Theorem \ref{main} leads to attain the optimal initial velocity as 
	$$\U^*=-\p(0)\in\mathscr{U}_{\text{ad}}\subset\V_{\text{div}}.$$ Thus, we summarize the above analysis as 
	
	\begin{theorem}[Optimal Initial Control]\label{data}
		Let $(\u^*,\varphi^*,\U^*)\in\mathscr{A}_{\text{ad}}$ be an optimal triplet. Then there exists a unique weak solution $$(\p,\eta)\in(\C([0,T];\V_{\text{div}})\cap\mathrm{L}^2(0,T;\H^2))\times( \C([0,T];\mathrm{V})\cap \mathrm{L}^2(0,T;\mathrm{H}^2)),$$ of the adjoint system \eqref{5.3} such that the optimal control is obtained as $\U^*=-\p(0)\in\mathscr{U}_{\text{ad}}\subset\V_{\text{div}}.$
	\end{theorem}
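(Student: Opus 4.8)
The statement assembles three ingredients, and the plan is to combine them in order: (i) invoke the existence, uniqueness and higher regularity of the adjoint pair $(\p,\eta)$ solving \eqref{5.3}, which has just been established; (ii) derive the minimum principle for the initial-data control problem \eqref{IOCP}, adapting the argument of Theorems \ref{3.10} and \ref{main} to the fact that the control now enters only through the initial velocity; and (iii) use the continuity of $\p$ at $t=0$ to read off the optimal control $\U^*=-\p(0)$ and verify that it lands in the admissible class $\mathscr{U}_{\text{ad}}\subset\V_{\text{div}}$. The existence of an optimal triplet itself is already guaranteed by Theorem \ref{optimal1}.

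First I would recall that the preceding theorem furnishes a unique weak solution of \eqref{5.3} with
$$(\p,\eta)\in(\C([0,T];\V_{\text{div}})\cap\mathrm{L}^2(0,T;\H^2))\times(\C([0,T];\mathrm{V})\cap\mathrm{L}^2(0,T;\mathrm{H}^2)).$$
This comes from a Faedo-Galerkin scheme as in Theorem \ref{adjoint}: one tests the two equations with $-\Delta\p$ and $-\Delta\eta$, exploits the coercivity $C_0\|\Delta\eta\|^2\leq((a+\F''(\varphi))\Delta\eta,\Delta\eta)$ supplied by Assumption \ref{prop of F and J}(2), controls the coupling terms through the H\"older, Ladyzhenskaya, Agmon and Young inequalities, and closes the estimate by Gr\"onwall. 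Finiteness of the right-hand side relies on $(\u,\varphi)$ being the unique strong solution of \eqref{5.1}, so that $\u\in\mathrm{L}^2(0,T;\H^2)$ and $\varphi\in\mathrm{L}^\infty(0,T;\mathrm{H}^2)$ (hence $\nabla\varphi\in\mathrm{L}^\infty(0,T;\mathbb{L}^4)$), together with the data assumptions \eqref{um}.

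Next I would establish the minimum principle. Since the control is now a single datum $\U\in\V_{\text{div}}$ rather than a time trajectory, the time-localized spike variation of Theorem \ref{3.10} is replaced by the convex perturbation $\U_\theta=\U^*+\theta(\mathrm{W}-\U^*)$ with $\theta\in(0,1]$ and $\mathrm{W}\in\mathscr{U}_{\text{ad}}$. The difference quotient of the corresponding states converges to the solution $(\w,\psi)$ of the linearization \eqref{lin w}-\eqref{lin initial conditions} with forcing $\widetilde\h=\U=\mathbf{0}$ but with perturbed initial datum $\w(0)=\mathrm{W}-\U^*$ and $\psi(0)=0$. Differentiating $\mathcal{J}$ along this perturbation, pairing against the adjoint system \eqref{5.3}, and integrating by parts in space and time exactly as in the proof of Theorem \ref{3.10}, the state-dependent contributions cancel against the adjoint dynamics; the boundary terms at $t=0$ now produce $\p(0)$ precisely because $\w(0)=\mathrm{W}-\U^*$, leaving
$$\frac{1}{2}\|\U^*\|^2+\langle\p(0),\U^*\rangle\leq\frac{1}{2}\|\mathrm{W}\|^2+\langle\p(0),\mathrm{W}\rangle,\qquad\text{for all }\mathrm{W}\in\mathscr{U}_{\text{ad}}.$$
As noted after Theorem \ref{main}, since $\tfrac12\|\cdot\|^2$ is G\^ateaux differentiable its subdifferential is a singleton, and because $\mathscr{U}_{\text{ad}}=\V_{\text{div}}$ is the whole space this inequality forces $\U^*=-\p(0)$.

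The main obstacle is the regularity of $\p(0)$. In the distributed problem the characterization $\U^*=-\p(t)$ required only $\p\in\C([0,T];\G_{\text{div}})$, but here the control lives in $\V_{\text{div}}$, so the identity $\U^*=-\p(0)$ is meaningful only once $\p(0)\in\V_{\text{div}}$ is known. This is exactly why the stronger conclusion $\p\in\C([0,T];\V_{\text{div}})$—and hence the higher-order estimates testing with $-\Delta\p$ and $-\Delta\eta$, which in turn demand the full strong-solution regularity of $(\u,\varphi)$ under \eqref{initial}—cannot be bypassed. Granting that regularity, continuity of $\p$ at $t=0$ in $\V_{\text{div}}$ yields $\p(0)\in\V_{\text{div}}$, so that $\U^*=-\p(0)\in\mathscr{U}_{\text{ad}}\subset\V_{\text{div}}$, which completes the proof.
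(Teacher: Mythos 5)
Your proposal is correct, and its overall architecture is the same as the paper's: Theorem \ref{data} is obtained in the paper by assembling (a) existence of an optimal triplet (Theorem \ref{optimal1}), (b) the higher-regularity theorem for the adjoint system \eqref{5.3} proved immediately before it (testing with $-\Delta\p$ and $-\Delta\eta$, coercivity $C_0\|\Delta\eta\|^2\leq((a+\F''(\varphi))\Delta\eta,\Delta\eta)$ from Assumption \ref{prop of F and J}, and Gr\"onwall), and (c) the minimum principle, from which $\U^*=-\p(0)$ and, via continuity of $\p(\cdot)$ at $t=0$ in $\V_{\text{div}}$, the membership $\U^*\in\mathscr{U}_{\text{ad}}\subset\V_{\text{div}}$. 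The one point where you genuinely diverge is step (c). The paper only asserts that the minimum principle ``follows as in Theorem \ref{main} with some obvious modifications,'' i.e., it points back to the Ekeland-plus-spike-variation machinery of Theorem \ref{3.10} without saying what replaces the spike. You make the modification explicit and choose the cleaner route: since an optimizer already exists by Theorem \ref{optimal1}, Ekeland's principle is not needed at all, and the time-localized spike variation (which has no meaning when the control is a single element of $\V_{\text{div}}$ rather than a trajectory) is replaced by the convex perturbation $\U_\theta=\U^*+\theta(\W-\U^*)$, whose difference quotients converge to the linearized system with data $\w(0)=\W-\U^*$, $\psi(0)=0$; duality with \eqref{5.3} then yields the variational inequality $(\U^*+\p(0),\W-\U^*)\geq 0$ for all $\W\in\V_{\text{div}}$, which (by convexity of $\frac12\|\cdot\|^2$) also gives the quadratic form of the minimum principle and forces $\U^*=-\p(0)$. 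This buys a shorter, self-contained first-order argument; what it costs—and what the paper equally leaves implicit—is a convergence lemma for the initial-data difference quotients playing the role of Lemmas \ref{lem3.12} and \ref{lem3.13}; for completeness you should state and prove it, though it is easier than the spike case because the perturbation enters linearly through the initial condition and the continuous-dependence estimate of Theorem \ref{unique} applies directly. One last refinement: your claim that the identity $\U^*=-\p(0)$ is meaningful only once $\p(0)\in\V_{\text{div}}$ is known mirrors the paper's logic, but strictly speaking the inequality over $\W\in\V_{\text{div}}$, a dense subspace of $\G_{\text{div}}$, already forces $\|\U^*+\p(0)\|=0$ with $\p(0)$ merely in $\G_{\text{div}}$, whence $\p(0)=-\U^*\in\V_{\text{div}}$ comes for free; the higher-regularity theorem remains indispensable, however, because the asserted regularity $(\p,\eta)\in(\C([0,T];\V_{\text{div}})\cap\mathrm{L}^2(0,T;\H^2))\times(\C([0,T];\mathrm{V})\cap\mathrm{L}^2(0,T;\mathrm{H}^2))$ is part of the statement of Theorem \ref{data} itself.
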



\begin{thebibliography}{99}	\bibitem{FART}  F. Abergel and R. Temam, On Some Control Problems in Fluid Mechanics, \emph{Theoretical and Computational Fluid Dynamics}, {\bf 1} (1990), 303--325.
		
		\bibitem{SA} 	S. Agmon,  \emph{Lectures on Elliptic Boundary Value Problems}, AMS Chelsea Publishing, Providence, RI, 1965.
		
		
		\bibitem{AbEk} J.-P. Aubin and  I. Ekeland,\emph{Applied Nonlinear Analysis}, Dover Publications, New York, 1984.
		
		
		
		
		\bibitem{JMB}	J. M. Ball, Strongly continuous semigroups, weak solutions, and variational constants formula, \emph{Proceedings of the American Mathematical Society}, {\bf 63}(2) (1977), 370--373.
		
		
		\bibitem{BDM}	T. Biswas, S. Dharmatti and M. T. Mohan, Pontryagin maximum principle for optimal control of the nonlocal Cahn-Hilliard-Navier-Stokes systems in two dimensions, \emph{https://arxiv.org/pdf/1802.08413.pdf}. 
		
		\bibitem{exist local chns} F. Boyer,  Mathematical study of multi-phase flow under shear through order parameter formulation, \textit{Asymptotic analysis}, {\bf 20}(2) (1999), 175--212.
		
		\bibitem{weak} P. Colli,  S. Frigeri  and M. Grasselli,  Global existence of weak solutions to a nonlocal Cahn–Hilliard Navier–Stokes system, \textit{Journal of Mathematical Analysis and Applications}, {\bf 386}(1) (2012), 428--444.
		
		
		\bibitem{boundary} P.  Colli, and J. Sprekels,  Optimal boundary control of a nonstandard Cahn-Hilliard system with dynamic boundary condition and double obstacle inclusions, In: Colli P., Favini A., Rocca E., Schimperna G., Sprekels J. (eds) Solvability, Regularity, and Optimal Control of Boundary Value Problems for PDEs. Springer INdAM Series, vol {\bf 22}, 151--182, Springer, Cham, 2017.
		
		
		
		
		\bibitem{tumor} P. Colli,  G. Gilardi,  E. Rocca  and J. Sprekels, 	Optimal distributed control of a diffuse interface model of tumor growth, \emph{Nonlinearity}, {\bf 30} (2017), 2518--2546.
		
		
		\bibitem{optimal1}	P. Colli, M. H. Farshbaf-Shaker,  G. Gilardi  and J. Sprekels,  Optimal boundary control of a viscous Cahn–Hilliard system with dynamic boundary condition and double obstacle potentials, \emph{SIAM Journal of Control Optimization}, {\bf 53}(2) (2015), 696--721.
		
		
		\bibitem{ED}	E.  DiBenedetto,  \emph{Degenerate Parabolic Equations}, Springer-Verlag, New York, 1993.
		
		\bibitem{SDMTS}	S. Doboszczak, M. T. Mohan, and S. S. Sritharan, Necessary conditions for distributed optimal control of linearized compressible Navier-Stokes equations, \emph{Submitted}.
		
		\bibitem{lions} D. E. Edmunds,  Optimal control of systems governed by partial differential equations, \textit{Bulletin of the London Mathematical Society}, {\bf 4}(2) (1972), 236--237.
		
		\bibitem{EKE} I. Ekeland, On the variational principle,  \emph{Journal of Mathematical Analysis and Applications}, {\bf 47}  (1974), 324--353.
		
		
		\bibitem{HFSS}	H. O. Fattorini and S. S. Sritharan, Necessary and sufficient conditions for optimal controls in viscous flow problems, \emph{Proceedings of the Royal Society of London Series A}, {\bf 124} (1994), 211--251.
		
		\bibitem{HOF} H. O. Fattorini, \emph{Infinite dimensional optimization and control theory}, Cambridge University Press, 1999.
		
		
		\bibitem{unique} S. Frigeri,  C. G.  Gal and M. Grasselli, On nonlocal Cahn–Hilliard–Navier–Stokes systems in two dimensions, \textit{Journal of Nonlinear Science}, {\bf 26}(4) (2016), 847--893.
		
		
		\bibitem{strong} S. Frigeri,  M. Grasselli  and P. Krejci,  Strong solutions for two-dimensional nonlocal Cahn–Hilliard-Navier–Stokes systems,  \textit{Journal of Differential Equations}, {\bf 255}(9) (2013), 2587--2614.
		
		\bibitem{ControlCHNS} S. Frigeri, E. Rocca  and J. Sprekels, Optimal distributed control of a nonlocal  Cahn–Hilliard Navier–Stokes system in two dimension, \emph{SIAM journal of Control and Optimization}, {\bf 54}(1) (2015), 221--250.
		
		\bibitem{SFMG}  S. Frigeri, M. Grasselli and J. Sprekels, Optimal distributed control of two-dimensional nonlocal Cahn-Hilliard-Navier-Stokes systems with degenerate mobility and singular potential, \emph{https://arxiv.org/pdf/1801.02502.pdf}. 
		
		\bibitem{fursikov} A. V. Fursikov,  \emph{Optimal control of distributed systems: Theory and applications}, American Mathemtical Society, Rhode Island, 2000.
		
		\bibitem {gunzburger} M. D. Gunzburger,  \emph{Perspectives in Flow Control and
			Optimization}, SIAM's Advances in Design and Control series,
		Philadelphia, 2003.
		
		
		
		
		
		
		\bibitem{OAL}	O. A. Ladyzhenskaya, \emph{The Mathematical Theory of Viscous Incompressible Flow}, Gordon and Breach, New York, 1969.
		
		\bibitem{LIYO} X. Li and J. Yong, \emph{Optimal Control Theory for Infinite Dimensional Systems}, Birkh\"auser Boston 1995.
		
		\bibitem{robust} T. Mejdo, \emph{Robust Control of a Cahn-Hilliard-Navier-Stokes Model}, \textit{Communications on Pure and Applied Analysis}, {\bf 15.6} (2016).
		
		\bibitem{state const} T. Medjo, \emph{Optimal control of a Cahn–Hilliard–Navier–Stokes model with state constraints}, \textit{J. Convex Anal}, {\bf 22} (2015), 1135-1172.
		
		\bibitem{LN}  L. Nirenberg,  On elliptic partial differential equations, \emph{Ann. Scuola Norm. Sup. Pisa} {\bf 3} 13 (1959), 115--162.
		
		
		
		
		\bibitem{sritharan} S. S. Sritharan,   \emph{Optimal Control of Viscous Flow}, SIAM
		Frontiers in Applied Mathematics, Piladelphia, \textit{Society for Industrial and Applied Mathematics}, 1998.
		
		\bibitem{Te} R. Temam,  \emph{Navier-Stokes Equations, Theory and Numerical
			Analysis}, North-Holland, Amsterdam, 1984.
		
		\bibitem{GW} G. Wang,	Pontryagin maximum principle of optimal control governed by  fluid dynamic systems with two point boundary state constraint, Nonlinear Analysis, {\bf 51} (2002) 509--536.
		
		\bibitem{GWLW} G. Wang and L. Wang	Maximum principle of state-constrained optimal control governed by  fluid dynamic systems, Nonlinear Analysis, {\bf 52} (2003) 1911--1931.
		
		
		\bibitem{optimal control for ch eq} X. Zhao  and  C. Liu,  Optimal control problem for viscous Cahn–Hilliard equation,  \textit{Nonlinear Analysis: Theory, Methods and Applications}, {\bf 74}(17) (2011), 6348--6357.
		
		\bibitem{optimal control for ch eq with state constraints} J. Zheng  and Y. Wang,  Optimal control problem for Cahn–Hilliard equations with state constraint,  \textit{Journal of Dynamical and Control Systems}, {\bf 21}(2) (2015), 257--272.
		
		
	\end{thebibliography}
\end{document}